\author{
Jing Liu\footnote{E-mail address: 2022201091@buct.edu.cn}\\ 
\footnotesize{College of Mathematics and Physics, }\\ 
\footnotesize{Beijing University of Chemical Technology, Beijing 100029, China}
\vspace{1em} \\ 
Guang Rao\footnote{E-mail address: grant.r@nus.edu.sg}\\ 
\footnotesize{Yale-NUS College, National University of Singapore, }\\ 
\footnotesize{16 College Avenue West, Singapore 138527, Singapore} 
\vspace{1em} \\ 
Hui Zhou\footnote{E-mail addresses: zhouh06@qq.com, zhouhlzu06@126.com}~\footnote{Corresponding author.}\\ 
\footnotesize{College of Mathematics and Physics, }\\ 
\footnotesize{Beijing University of Chemical Technology, Beijing 100029, China}
}
\title{\Large Oriented diameter of the complete tripartite graph (III)}
\def\d{{\rm d}}
\def\diam{{\rm diam}}
\def\K{{\sf K}}
\def\rev{{\sf rev}}
\newtheorem{theorem}{Theorem}[section]
\newtheorem{lemma}[theorem]{Lemma}
\newtheorem{orientation}[theorem]{Orientation}
\newcommand*{\QEDA}{\hfill\ensuremath{\blacksquare}}  
\newenvironment{proof}[1][\hspace{2ex}\textbf{\textit{Proof}.}\hspace{1ex}]{\begin{trivlist}\item[\hskip \labelsep {\bfseries #1}]}{\QEDA\end{trivlist}}
\begin{document}

\maketitle

\begin{abstract}
Given a graph $G$, let $\mathbb{D}(G)$ be the set of all strong orientations of $G$, and define the oriented diameter of $G$ to be
\begin{center}
$f(G)=\min\{\diam(D)\mid D\in \mathbb{D}(G)\}$.
\end{center}
In this paper, we determine the oriented diameter of complete tripartite graph $\K(3,p,q)$ for $p\geqslant 5$. Combining with the previous results, the oriented diameter of complete tripartite graph $\K(3,p,q)$ are known.
\end{abstract}

\textbf{Keywords}: bridgeless graph; strong orientation; oriented diameter; complete tripartite graph.

\textbf{MSC}: 05C20; 05C12.


\section{Introduction}\label{sec-Introduction}

Let $G$ be a finite undirected connected graph with vertex set $V(G)$ and edge set $E(G)$. Take $u,v\in V(G)$. The distance $\d_G(u,v)$ is the number of edges in a shortest path connecting $u$ and $v$ in $G$. The diameter of $G$ is defined to be $\diam(G)=\max\{\d_G(u,v)\mid u,v\in V(G)\}$. An edge $e\in E(G)$ is called a bridge if the resulting graph obtained from $G$ by deleting $e$ is disconneccted. A graph is called bridgeless if it has no bridge. An orientation $D$ of $G$ is a digraph obtained from $G$ by assigning a direction for each edge. A digraph is strong (or strongly connected) if for any two vertices $u,v$, there is a directed path from $u$ to $v$ in this digraph. An orientation $D$ of $G$ is called a strong orientation if the digraph $D$ is strong. Robbins' one-way street theorem~\cite{Robbins1939} states that
\begin{center}a connected graph has a strong orientation if and only if it is bridgeless.\end{center}

Given a connected graph $G$ which is bridgeless, let $\mathbb{D}(G)$ be the set of all strong orientations of $G$. Define the oriented diameter of $G$ to be
\begin{center}
$f(G)=\min\{\diam(D)\mid D\in \mathbb{D}(G)\}$,
\end{center}
where $\diam(D)$ denote the diameter of $D$. The problem of evaluating oriented diameter $f(G)$ of an arbitrary connected graph $G$ is very difficult. Chv\'{a}tal and Thomassen~\cite{ChvatalThomassen1978} showed that the problem of deciding whether a graph admits an orientation of diameter two is NP-hard.

Given positive integers $n$, $p_1,p_2,\ldots,p_n$, let $\K_n$ denote the complete graph of order $n$, and let $\K(p_1,p_2,\ldots,p_n)$ denote the complete $n$-partite graph having $p_i$ vertices in the $i$-th partite set $V_i$ for each $i\in \{1,2,\ldots,n\}$. Thus $\K_n$ is also a complete $n$-partite graph $\K(p_1,p_2,\ldots,p_n)$ where $p_1=p_2=\cdots=p_n=1$. The oriented diameter of complete graph $\K_n$ was obtained by Boesch and Tindell~\cite{BoeschTindell1980}: 
\[
f(\K_n)=\left\{\begin{array}{ll}
                        2, & \text{if $n\geqslant 3$ and $n\neq 4$};\\ 
                        ~ & ~\\
                        3, & \text{if $n=4$}.
                   \end{array}
             \right.
\]
The oriented diameter of complete bipartite graph $\K(p,q)$ for $2\leqslant p\leqslant q$ was obtained by \v{S}olt\'{e}s~\cite{Soltes1986}: 
\[
f(\K(p,q))=\left\{\begin{array}{ll}
                        3, & \text{if $q\leqslant \binom{p}{\lfloor\frac{p}{2}\rfloor}$};\\ 
                        ~ & ~\\
                        4, & \text{if $q> \binom{p}{\lfloor\frac{p}{2}\rfloor}$};
                   \end{array}
             \right.
\]
where $\lfloor x\rfloor$ denotes the greatest integer not exceeding $x$. Gutin~\cite{Gutin1989} also gave a beautiful proof of this result, independently, by using Sperner's Lemma~\cite{Sperner1928,vanLintWilson1992}. Two sets $A$ and $B$ are independent if $A\setminus B\neq\emptyset$ and $B\setminus A\neq\emptyset$. If $A$ and $B$ are independent, then we say that $A$ is independent with $B$ or $B$ is independent with $A$. Denote $\lceil x \rceil$ the least integer exceeding $x$.

\begin{lemma}[Sperner's Lemma]\label{lem-Sperner}
Let $n$ be a positive integer, and let $\mathbb{C}$ be a collection of subsets of $[n]=\{1,2,\ldots,n\}$ such that $A$ and $B$ are independent for any distinct sets $A,B\in \mathbb{C}$. We call $\mathbb{C}$ an independent collection of $[n]$. Then $|\mathbb{C}|\leqslant \binom{n}{\lfloor\frac{n}{2}\rfloor}$, and the equality holds if and only if all subsets in $\mathbb{C}$ are of the same size, $\lfloor\frac{n}{2}\rfloor$ or $\lceil\frac{n}{2}\rceil$. 
\end{lemma}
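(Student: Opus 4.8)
The plan is to establish the cardinality bound through Lubell's permutation double-counting argument (which in fact yields the sharper LYM inequality), and then to read off the equality characterization from the two steps where that argument is lossy.

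First I would double count, for a fixed independent collection $\mathbb{C}$, the pairs $(A,\sigma)$ where $A\in\mathbb{C}$ and $\sigma$ is a permutation of $[n]$ for which $A=\{\sigma(1),\dots,\sigma(|A|)\}$ is an initial segment of $\sigma$. For a fixed $A$ with $|A|=a$ there are exactly $a!\,(n-a)!$ such permutations. Conversely, for a fixed $\sigma$ the initial segments $\{\sigma(1),\dots,\sigma(i)\}$, $0\le i\le n$, form a chain under inclusion, and since any two distinct members of $\mathbb{C}$ are independent (neither contains the other), at most one member of $\mathbb{C}$ can be an initial segment of $\sigma$. Hence $\sum_{A\in\mathbb{C}}|A|!\,(n-|A|)!\le n!$, i.e.
\[
\sum_{A\in\mathbb{C}}\binom{n}{|A|}^{-1}\le 1 .
\]
Because $\binom{n}{k}\le\binom{n}{\lfloor n/2\rfloor}$ for all $0\le k\le n$, each summand is at least $\binom{n}{\lfloor n/2\rfloor}^{-1}$, and therefore $|\mathbb{C}|\le\binom{n}{\lfloor n/2\rfloor}$.

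For the equality statement, the reverse implication is immediate: the family of all $\lfloor n/2\rfloor$-subsets of $[n]$, and likewise the family of all $\lceil n/2\rceil$-subsets, is independent and has size exactly $\binom{n}{\lfloor n/2\rfloor}$. For the forward implication, assume $|\mathbb{C}|=\binom{n}{\lfloor n/2\rfloor}$. Then both inequalities above are tight; tightness of the second forces $\binom{n}{|A|}=\binom{n}{\lfloor n/2\rfloor}$ for every $A\in\mathbb{C}$, that is $|A|\in\{\lfloor n/2\rfloor,\lceil n/2\rceil\}$. If $n$ is even this already gives $|A|=n/2$ for all $A\in\mathbb{C}$, and $\mathbb{C}$ must be the whole middle layer. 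If $n$ is odd, put $m=\lfloor n/2\rfloor$, $\mathcal A=\mathbb{C}\cap\binom{[n]}{m}$, $\mathcal B=\mathbb{C}\cap\binom{[n]}{m+1}$, and suppose both are nonempty. The independence of $\mathbb{C}$ says that no member of $\mathcal B$ contains a member of $\mathcal A$, so $\mathcal B$ is disjoint from the upper shadow $\partial^{+}\mathcal A=\{B\in\binom{[n]}{m+1}:A\subseteq B\text{ for some }A\in\mathcal A\}$. A short double count of the inclusion pairs between $\binom{[n]}{m}$ and $\binom{[n]}{m+1}$ (local LYM), using $n-m=m+1$ for odd $n$, gives $|\partial^{+}\mathcal A|\ge|\mathcal A|$; moreover equality here forces every $B\in\partial^{+}\mathcal A$ to have all of its $m$-subsets in $\mathcal A$, and by connectedness of the Johnson graph on $\binom{[n]}{m}$ this in turn forces $\mathcal A=\binom{[n]}{m}$. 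If $\mathcal A\ne\binom{[n]}{m}$ we thus get $|\partial^+\mathcal A|>|\mathcal A|$, whence $|\mathbb{C}|=|\mathcal A|+|\mathcal B|\le|\mathcal A|+\binom{n}{m+1}-|\partial^{+}\mathcal A|<\binom{n}{m+1}=\binom{n}{\lfloor n/2\rfloor}$, a contradiction; and if $\mathcal A=\binom{[n]}{m}$ then $\partial^+\mathcal A=\binom{[n]}{m+1}$, forcing $\mathcal B=\emptyset$, again a contradiction. Hence one of $\mathcal A,\mathcal B$ is empty, all members of $\mathbb{C}$ have the same size $m$ or $m+1$, and tightness makes $\mathbb{C}$ that entire layer.

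I expect the main obstacle to be precisely the odd-$n$ part of the equality analysis: ruling out antichains that mix the two middle layers requires the expansion (normalized matching) property of the Boolean lattice between levels $m$ and $m+1$. To keep the argument self-contained I would prove the local-LYM inequality $|\partial^{+}\mathcal A|\big/\binom{n}{m+1}\ge|\mathcal A|\big/\binom{n}{m}$ by counting covering pairs and then characterize its equality case as above; alternatively one may simply invoke the standard fact that a connected regular bipartite graph is expanding via Hall's theorem. The rest of the proof is routine.
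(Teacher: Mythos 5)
Your proof is correct, but note that the paper does not prove this lemma at all: it is stated as a classical result with citations to Sperner and to van Lint--Wilson, so there is no in-paper argument to compare against. Your route is the standard one: Lubell's permutation double count gives the LYM inequality $\sum_{A\in\mathbb{C}}\binom{n}{|A|}^{-1}\leqslant 1$, hence the cardinality bound; tightness of the step $\binom{n}{|A|}\leqslant\binom{n}{\lfloor n/2\rfloor}$ pins the sizes to the middle layer(s); and for odd $n$ the local LYM inequality together with the connectivity of the Johnson graph rules out a mixture of the two middle levels. All of these steps check out, including the equality analysis of local LYM (each $B$ in the upper shadow must contain all $m+1$ of its $m$-subsets in $\mathcal{A}$, which propagates to the whole layer). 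One small remark: read literally, the ``if'' direction of the equality clause in the statement is false (a proper subfamily of a middle layer has all sets of the same size but is not of maximum cardinality); you have sensibly proved the intended and stronger characterization that equality holds exactly when $\mathbb{C}$ is an entire middle layer, which is the form the paper actually invokes later (in Lemma~\ref{lem-H=2-V2AV2B-neq}, where equality is used to conclude $\mathbb{S}=\binom{V_2}{\lambda}$ and $\mathbb{T}=\binom{V_2}{\lambda-1}$). So your write-up supplies a complete, self-contained proof of a fact the paper merely imports.
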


For $n\geqslant 3$, Plesn\'{\i}k~\cite{Plensik1985}, Gutin~\cite{Gutin1994}, and Koh and Tan~\cite{KohTan1996DM} obtained independently the following result for oriented diameter of complete $n$-partite graph:  
\[
2\leqslant f(\K(p_1,p_2,\ldots,p_n))\leqslant 3.
\]
They also got some other results on complete multipartite graphs. In a survey by Koh and Tay~\cite{KohTay2002}, earlier results were collected in it, for example: lower and upper bounds for some graphs with special parameters, oriented diameters for Cartesian product and extensions of graphs, etc.  

\vspace{2em}

Koh and Tay~\cite{KohTay2002} proposed a problem:
\begin{center}given a complete multipartite graph $G=\K(p_1,p_2,\ldots,p_n)$,\\ classify it according to $f(G)=2$ or $3$.\end{center} 

\vspace{2em}

For complete tripartite graphs $\K(2,p,q)$ with $2\leqslant p\leqslant q$, this problem is completely solved. Koh and Tan~\cite{KohTan1996GC} proved 
\begin{center}$f(\K(2,p,q))=2$ for $2\leqslant p\leqslant q\leqslant \binom{p}{\lfloor\frac{p}{2}\rfloor}$. \end{center}
Rajasekaran and Sampathkumar~\cite{RajasekaranSampathkumar2015} proved $f(\K(2,2,q))=3$ for $q\geqslant 3$, and $f(\K(2,3,q))=3$ for $q\geqslant 4$, and they also got in an unpublished manuscript that $f(\K(2,4,q))=3$ for $q\geqslant 7$. Hence they~\cite{RajasekaranSampathkumar2015} conjectured that  
\begin{center}
$f(\K(2,p,q))=3$ when $p\geqslant 5$ and $q>\binom{p}{\lfloor\frac{p}{2}\rfloor}$. 
\end{center}
Liu, Rao, Zhang and Zhou~\cite{LiuRaoZhangZhou2024+} proved
\begin{center}
$f(\K(2,p,q))=3$ when $p\geqslant 2$ and $q>\binom{p}{\lfloor\frac{p}{2}\rfloor}$,
\end{center}
and hence they confirmed this conjecture. Combining with these results, the oriented diameter of complete tripartite graph $\K(2,p,q)$ is completely determined: for $2\leqslant p\leqslant q$, 
\[
f(\K(2,p,q))=\left\{\begin{array}{ll}
                        2, & \text{if $p\leqslant q\leqslant \binom{p}{\lfloor\frac{p}{2}\rfloor}$};\\ 
                        ~ & ~\\
                        3, & \text{if $q> \binom{p}{\lfloor\frac{p}{2}\rfloor}$}.
                   \end{array}
             \right.
\]


The problem goes to complete tripartite graphs $\K(3,p,q)$ with $3\leqslant p\leqslant q$. Liu and Zhou~\cite{LiuZhou2024+} proved
\[
f(\K(3,3,q))=\left\{\begin{array}{ll}
                        2, & \text{if $3\leqslant q\leqslant 6$};\\ 
                        ~ & ~\\
                        3, & \text{if $q> 6$};
                   \end{array}
             \right.
\]
\[
f(\K(3,4,q))=\left\{\begin{array}{ll}
                        2, & \text{if $4\leqslant q\leqslant 11$};\\ 
                        ~ & ~\\
                        3, & \text{if $q> 11$}.
                   \end{array}
             \right.
\]
In this paper, we continue to solve this problem for $\K(3,p,q)$ with $p\geqslant 5$, and we get the following Theorem~\ref{thm-main}. Hence the problem posed by Koh and Tay for complete tripartite graphs $\K(3,p,q)$ is completely solved. 

\begin{theorem}\label{thm-main}
Suppose $5\leqslant p\leqslant q$, then 
\[
f(\K(3,p,q))=\left\{\begin{array}{ll}
                        2, & \text{if $p\leqslant q< \binom{p+1}{\lfloor\frac{p+1}{2}\rfloor}$};\\ 
                        ~ & ~\\
                        3, & \text{if $q\geqslant \binom{p+1}{\lfloor\frac{p+1}{2}\rfloor}$}.
                   \end{array}
             \right.
\]
\end{theorem}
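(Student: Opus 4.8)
The proof follows the general strategy developed for $\K(2,p,q)$ in~\cite{LiuRaoZhangZhou2024+} and for $\K(3,3,q)$ and $\K(3,4,q)$ in~\cite{LiuZhou2024+}. Since the quoted bounds give $2\leqslant f(\K(3,p,q))\leqslant 3$, Theorem~\ref{thm-main} amounts to deciding, for each $(p,q)$ with $5\leqslant p\leqslant q$, whether $\K(3,p,q)$ has a strong orientation of diameter $2$. Fix notation: $V_1=\{a_1,a_2,a_3\}$, $|V_2|=p$, $|V_3|=q$; for an orientation $D$ and $c\in V_3$ write $A_c=\{v\in V_1\cup V_2:v\to c\}$, $A_c^1=A_c\cap V_1$, $A_c^2=A_c\cap V_2$, and for $i\in\{1,2,3\}$ write $P_i=\{b\in V_2:a_i\to b\}$. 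If $D$ has diameter $2$, then for distinct $c,c'\in V_3$ any route from $c$ to $c'$ of length $\leqslant 2$ passes through a common neighbour of $c$ and $c'$, necessarily a vertex of $V_1\cup V_2$; applying this in both directions shows that $\{A_c:c\in V_3\}$ is an antichain in $2^{V_1\cup V_2}$ with $\emptyset\subsetneq A_c\subsetneq V_1\cup V_2$ (as $D$ is strong, $c$ has an in- and an out-neighbour there). By Lemma~\ref{lem-Sperner} this gives only $q\leqslant\binom{p+3}{\lfloor\frac{p+3}{2}\rfloor}$, which is far too large; the entire problem is to sharpen it.

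\textbf{Lower bound.} We show that $q\geqslant\binom{p+1}{\lfloor\frac{p+1}{2}\rfloor}$ forces $f=3$. Assume $D$ has diameter $2$. The remaining pair types supply further short-route conditions: for every $c\in V_3$ and each $i$, $a_i\to c$ forces $A_c^2\cup P_i\ne V_2$ and $c\to a_i$ forces $A_c^2\cap P_i\ne\emptyset$; the pairs inside $V_1$ couple $P_1,P_2,P_3$ with the traces $A_c^1$; and the pairs within $V_2$ and between $V_2$ and $V_3$ constrain the sets $A_c^2$ relative to one another and to $P_1,P_2,P_3$. The plan is to use these, together with the antichain property and the hypothesis $p\geqslant 5$, to reduce the eight possible $V_1$-traces $A_c^1$ to a single $\{0,1\}$-valued invariant $\epsilon(c)$ that is compatible with the antichain: adjoining a virtual element $*$ to $V_2$ and setting $\widetilde A_c=A_c^2\cup\{*\}$ when $\epsilon(c)=1$ and $\widetilde A_c=A_c^2$ when $\epsilon(c)=0$, one obtains an antichain $\{\widetilde A_c:c\in V_3\}$ in the $(p+1)$-element set $V_2\cup\{*\}$, whence $q\leqslant\binom{p+1}{\lfloor\frac{p+1}{2}\rfloor}$ by Lemma~\ref{lem-Sperner}. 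The remaining $-1$ is obtained by excluding $q=\binom{p+1}{\lfloor\frac{p+1}{2}\rfloor}$: by the equality clause of Lemma~\ref{lem-Sperner} the family $\{\widetilde A_c\}$ would then consist of \emph{all} $\lfloor\frac{p+1}{2}\rfloor$-subsets (or all $\lceil\frac{p+1}{2}\rceil$-subsets) of $V_2\cup\{*\}$, and this rigid configuration violates one of the short-route conditions above. The restriction $p\geqslant5$ is essential here and not merely technical: $f(\K(3,4,11))=2$ although $\binom{5}{2}=10<11$, so the reduction just described must genuinely fail for $p\leqslant4$.

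\textbf{Upper bound.} We construct a strong orientation of diameter $2$ whenever $p\leqslant q\leqslant\binom{p+1}{\lfloor\frac{p+1}{2}\rfloor}-1$. Put $U=V_2\cup\{*\}$ with $|U|=p+1$. Since $p\geqslant5$, the number $\binom{p+1}{\lfloor\frac{p+1}{2}\rfloor}$ is large enough that we may choose an antichain $\mathcal{A}=\{A_c:c\in V_3\}$ of $\lfloor\frac{p+1}{2}\rfloor$-subsets of $U$ of cardinality $q$ that is moreover \emph{rich}: for every ordered pair of distinct $u,u'\in U$ some $A_c$ contains $u$ but not $u'$, every element of $U$ lies in some member and misses another, and the further separation properties used below hold. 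Orient $D$ by: $b\to c$ iff $b\in A_c$ for $b\in V_2$ and $c\in V_3$; $a_1\to c$ iff $*\in A_c$; and all edges incident to $a_2$ or $a_3$ — those to $V_2$, those to $V_3$, and the three edges of $V_1$ — by a fixed local recipe, chosen so that $a_2,a_3$ provide exactly the short routes that $a_1$ and $\mathcal{A}$ do not and so that $D$ is strong. One then checks $\d_D(x,y)\leqslant2$ for the six kinds of ordered pair — within $V_3$, within $V_2$, within $V_1$, and the non-arc direction between $V_1$ and $V_2$, between $V_1$ and $V_3$, and between $V_2$ and $V_3$ — each case reducing to a richness property of $\mathcal{A}$ or to the fixed recipe; $p\geqslant5$ is what leaves room for all of these at once.

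\textbf{Main obstacle.} The decisive difficulty is the collapse, in the lower bound, of the eight $V_1$-traces to a single binary invariant — equivalently, of the $(p+3)$-element ground set down to a $(p+1)$-element one. This collapse is genuinely false for $p\in\{3,4\}$, so it cannot be a formal consequence of the antichain condition; it must be extracted from the short-route conditions via an inequality relating $q$ to the $\binom{p}{j}$ that becomes available only for $p\geqslant5$, after which the Sperner-equality case still needs a separate and somewhat delicate exclusion. For the construction the counterpart difficulty is to produce one antichain that is rich enough for all six pair types simultaneously while still leaving the edges at $a_1,a_2,a_3$ consistently orientable; once more, the room for this exists precisely because $p\geqslant5$, and this is what pins the threshold at exactly $\binom{p+1}{\lfloor\frac{p+1}{2}\rfloor}$.
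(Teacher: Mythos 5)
Your proposal correctly identifies the answer, the role of Sperner's Lemma with a virtual element adjoined to $V_2$, and the need to exclude the equality case --- and this mechanism does appear in the paper, in exactly one configuration (the case $V_2=V_2^1\cup V_2^{12}$ of Lemma~\ref{lem-H=2-V2AV2B-neq}, which is where the extremal bound $\binom{p+1}{\lfloor\frac{p+1}{2}\rfloor}-1$ actually arises). But as written the proposal is a strategy outline, not a proof: both halves defer the decisive work. For the lower bound, the ``collapse of the eight $V_1$-traces to a single binary invariant $\epsilon(c)$'' is announced as ``the plan'' and then, in your own closing paragraph, named as the unresolved main obstacle. No such uniform collapse is carried out in the paper either; instead the paper partitions $V_2$ (not $V_3$) into the eight classes $V_2^A$ according to its orientation pattern toward $V_1$, and runs an exhaustive case analysis over which classes are nonempty ($|\mathbb{H}|=1$ through $7$, Sections~\ref{subsec-H=1}--\ref{subsec-H=7}). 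In most configurations the bound on $q$ comes not from a single Sperner antichain on $p+1$ elements but from summing several ``mixed pair'' bounds $\binom{p_i}{\lfloor p_i/2\rfloor}$ obtained via Lemma~\ref{lem-Kpq-distance-4}, and one must then verify numerically that every such sum stays below $\binom{p+1}{\lfloor\frac{p+1}{2}\rfloor}-1$ for $p\geqslant 5$. None of this case analysis, nor any substitute for it, is present in your argument, so the claim that a diameter-two orientation forces $q\leqslant\binom{p+1}{\lfloor\frac{p+1}{2}\rfloor}-1$ is not established.

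The upper bound has the same character: ``a fixed local recipe'' for the edges at $a_2,a_3$ and a ``rich'' antichain are postulated but never exhibited, and the six verification cases are not checked. The paper needs genuinely different constructions in different ranges of $q$: cyclic $(p,q,\lambda)$-orientations split between $V_3^{23}$ (using $(\lceil\frac{p+1}{2}\rceil)$-subsets) and $V_3^{3}$ (using $(\lfloor\frac{p+1}{2}\rfloor)$-subsets, with $V_2^{12}$ excluded) to cover $2p\leqslant q\leqslant\binom{p+1}{\lfloor\frac{p+1}{2}\rfloor}-1$, plus separate orientations (\ref{orientation-2p-} and~\ref{orientation-p+3}) for $p\leqslant q\leqslant 2p$; a single uniform antichain of $\lfloor\frac{p+1}{2}\rfloor$-subsets does not obviously satisfy all the listed short-route conditions simultaneously (for instance, the condition $N_D^-(z)\cap V_2^{12}\neq\emptyset$ for $z\in V_3^3$ versus $N_D^+(z)\cap V_2^1\neq\emptyset$ for $z\in V_3^{23}$ forces the two sub-families to use sets of different sizes relative to $|V_2^{12}|$). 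To turn your outline into a proof you would need to supply the full structural case analysis for the lower bound and an explicit, verified construction for the upper bound.
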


Our main theorem Theorem~\ref{thm-main} has two parts.
\begin{enumerate}
  \item[(1)] {\bf big part:} when $q\geqslant \binom{p+1}{\lfloor\frac{p+1}{2}\rfloor}$, then the oriented diameter of $\K(3,p,q)$ is three, and the proof of this part is by analysing the structure of strong orientations with diameter two; 
  \item[(2)] {\bf small part:} when $p\leqslant q< \binom{p+1}{\lfloor\frac{p+1}{2}\rfloor}$, we find strong orientations with diameter two.
\end{enumerate}


\section{Preliminaries}\label{sec-priminaries}

Let $D$ be a digraph with vertex set $V(D)$. If $u,v\in V(D)$, the distance $\partial_D(u,v)$ is the number of directed edges in a shortest directed path from $u$ to $v$ in $D$. If $D$ is strongly connected, the diameter of $D$ is defined as 
\begin{center}
$\diam(D)=\max\{\partial_D(u,v)\mid u,v\in V(D)\}$, 
\end{center} 
and similarly, for subsets $U,V\subseteq V(D)$, we denote 
\begin{center}
$\diam(U,V)=\max\{\partial_D(u,v)\mid u\in U,~v\in V\}$, 
\end{center} 
when $U=\{u\}$ we write $\diam(u,V)$ for $\diam(U,V)$, when $V=\{v\}$ we write $\diam(U,v)$ for $\diam(U,V)$. 

Let $u,v\in V(D)$, and $U,V\subseteq V(D)$ such that $U\cap V=\emptyset$. We write 
\begin{center}
`$u\rightarrow v$' if the direction is from $u$ to $v$ in $D$. 
\end{center}
We write 
\begin{center}
`$U\rightarrow V$' if $x\rightarrow y$ for each $x\in U$ and for each $y\in V$, 
\end{center} 
if $U=\{u\}$ we write `$u\rightarrow V$' for $U\rightarrow V$, if $V=\{v\}$ we write `$U\rightarrow v$' for $U\rightarrow V$. All the out-neighbors of $u$ form a set 
\begin{center}
$N_D^+(u)=\{w\in V(D)\mid u\rightarrow w\}$, 
\end{center} 
and all the in-neighbors of $v$ form a set 
\begin{center}
$N_D^-(v)=\{w\in V(D)\mid w\rightarrow v\}$. 
\end{center}
For $S\subseteq V(D)$, we use $D[S]$ to denote the subgraph induced by $S$ in $D$, and denote the neighbour sets 
\begin{center}
  $N_D^+(S)=\bigcup\limits_{x\in S}N_D^+(x)$,\\ 
  $N_D^-(S)=\bigcup\limits_{x\in S}N_D^-(x)$. 
\end{center} 
We use $\rev(D)$ to denote the directed graph obtained from $D$ by reversing all the directions of arcs (directed edges) of $D$.

\begin{lemma}\label{lem-neighbour-diamter-2}
Suppose $X$ is a strongly connected digraph. Let $u,v\in V(X)$ be two vertices of $X$. If $N_X^+(u)\cap N_X^-(v)=\emptyset$, then $\partial_X(u,v)\neq 2$.
\end{lemma}

\begin{proof}
We assume $\partial_X(u,v)=2$, then there exists $w\in V(X)$ such that $u\rightarrow w\rightarrow v$. So $w\in N_X^+(u)\cap N_X^-(v)\neq\emptyset$, a contradiction.
\end{proof}

We can read out the following lemma from the proof of the oriented diameter of complete bipartite graph.

\begin{lemma}\label{lem-Kpq-distance-4}
Let $G=\K(p,q)$ be the complete bipartite graph with vertex set $V(G)=V_1\cup V_2$, where $2\leqslant |V_1|=p\leqslant |V_2|=q$. If $q>\binom{p}{\lfloor\frac{p}{2}\rfloor}$, then for any orientation $D$ of $G$, there exist vertices $z,w\in V_2$ such that $\partial_D(z,w)\geqslant 4$.
\end{lemma}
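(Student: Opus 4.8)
The plan is to argue by contradiction: suppose $D$ is an orientation of $G=\K(p,q)$ in which every pair of vertices of $V_2$ is at directed distance at most $3$. For each vertex $z\in V_2$, consider the set $N_D^+(z)\subseteq V_1$ of out-neighbours of $z$ inside $V_1$. I would first extract the key consequence of the distance-$\leqslant 3$ hypothesis: if $z,w\in V_2$ are distinct, then since any directed path from $z$ to $w$ must alternate between $V_1$ and $V_2$ (the graph is bipartite), a path of length $\leqslant 3$ from $z$ to $w$ is either of length $2$ (of the form $z\to x\to w$ with $x\in V_1$) or of length exactly $3$ — but a length-$3$ path $z\to x\to y\to w$ has $y\in V_2$ and $w\in V_1$, which is impossible since $w\in V_2$. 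Hence in fact the hypothesis forces $\partial_D(z,w)=2$ for all distinct $z,w\in V_2$, and by Lemma~\ref{lem-neighbour-diamter-2} this means $N_D^+(z)\cap N_D^-(w)\neq\emptyset$; equivalently, there is some $x\in V_1$ with $z\to x$ and $x\to w$, i.e. $N_D^+(z)\not\subseteq N_D^-(w)$... more precisely $N_D^+(z)$ meets $V_1\setminus N_D^+(w)$.

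The heart of the argument is then a Sperner-type counting step. For $z\in V_2$ write $A_z=N_D^+(z)\subseteq V_1=[p]$ (the set of partite-$1$ vertices that $z$ points to); note $V_1\setminus A_z=N_D^-(z)$. The condition "$\partial_D(z,w)=2$ for all distinct $z,w$" applied symmetrically in both directions gives: $A_z\cap (V_1\setminus A_w)\neq\emptyset$ and $A_w\cap(V_1\setminus A_z)\neq\emptyset$, i.e. $A_z\not\subseteq A_w$ and $A_w\not\subseteq A_z$. Thus $\{A_z : z\in V_2\}$ would be an antichain of subsets of $[p]$ — and in particular the map $z\mapsto A_z$ must be injective (if $A_z=A_w$ then neither containment is strict, contradiction). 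By Sperner's Lemma (Lemma~\ref{lem-Sperner}) an antichain in $[p]$ has size at most $\binom{p}{\lfloor p/2\rfloor}$, so $q=|V_2|\leqslant\binom{p}{\lfloor p/2\rfloor}$, contradicting the hypothesis $q>\binom{p}{\lfloor p/2\rfloor}$. Therefore some pair $z,w\in V_2$ has $\partial_D(z,w)\geqslant 4$.

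I expect the main obstacle to be bookkeeping the edge-cases and orientations of paths cleanly: one must be careful that "distance $\leqslant 3$" in a bipartite digraph between two vertices on the same side genuinely collapses to "distance exactly $2$" (ruling out odd-length short paths by parity and the nonexistence of length-$0$ paths between distinct vertices), and one must check that the two-sided antichain condition really is symmetric and really does force injectivity — both are short but need to be stated precisely. A secondary point worth a sentence: this is exactly the "$3\Rightarrow 2$ forces small antichain" mechanism underlying \v{S}olt\'es's and Gutin's proofs, so strictly speaking the lemma is a direct reading of their argument, as the text already signals; the proof I give is self-contained modulo Lemma~\ref{lem-Sperner} and Lemma~\ref{lem-neighbour-diamter-2}.
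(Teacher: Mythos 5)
Your proof is correct and is exactly the Sperner/antichain argument that the paper implicitly refers to when it says the lemma can be ``read out'' of Gutin's proof for $f(\K(p,q))$: parity forces $\partial_D(z,w)\leqslant 3$ to mean $\partial_D(z,w)=2$, which makes $\{N_D^+(z)\cap V_1 \mid z\in V_2\}$ an independent collection of $q$ distinct subsets of $V_1$, contradicting Lemma~\ref{lem-Sperner}. (Only a cosmetic slip: the clause ``$N_D^+(z)\not\subseteq N_D^-(w)$'' is the wrong reformulation of $N_D^+(z)\cap N_D^-(w)\neq\emptyset$, but you immediately correct it to $A_z\not\subseteq A_w$, which is what the argument actually uses.)
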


Let $P$ be a set, we use $2^P$ to denote the power set of $P$, i.e., $2^P$ consists of all the subsets of $P$. For a positive integer $k$, we use $\binom{P}{k}$ to denote the collection of subsets of $P$ with size $k$, i.e., $\binom{P}{k}=\{A\subseteq P\mid |A|=k\}$. In the following, we give a short list of the binomial coefficients $\binom{n}{\lfloor\frac{n}{2}\rfloor}$.

\begin{table}[!ht]
\centering
    \begin{tabular}{c|c|c|c|c|c|c|c|c|c|c|c|c|c}
        \hline 
        $n$ & 1 & 2 & 3 & 4 & 5 & 6 & 7 & 8 & 9 & 10 & 11 & 12 & $\cdots$\\
        \hline 
        $\binom{n}{\lfloor\frac{n}{2}\rfloor}$ & 1 & 2 & 3 & 6 & 10 & 20 & 35 & 70 & 126 & 252 & 462 & 924 & $\cdots$ \\
        \hline
    \end{tabular}
\end{table}


\section{Notations of $\K(3,p,q)$}\label{sec-notation-structure-K3pq}


In this section, we give some notations and structural analysis of diameter two orientations of $\K(3,p,q)$. Let 
\begin{center}
$V_1=\{x_1, x_2, x_3\}$, ~~~~~\\
$V_2=\{y_1, y_2, \ldots, y_p\}$,\\
$V_3=\{z_1, z_2, \ldots, z_q\}$~~
\end{center} 
be the three parts of the vertex set of $\K(3,p,q)$ with $3\leqslant p\leqslant q$. Let
\begin{center}
$[3]=\{1,2,3\}$.
\end{center}
Let $D$ be a strong orientation of $\K(3,p,q)$. Let  
\begin{center}
$N_D^{+}=N_D^+(x_1)\cap N_D^+(x_2)\cap N_D^+(x_3)$,\\
$N_D^{-}=N_D^-(x_1)\cap N_D^-(x_2)\cap N_D^-(x_3)$.
\end{center}
For $A\subseteq [3]$ with $\emptyset \neq A\neq [3]$, we let 
\begin{center}
$N_D^{A}=\Biggl(\bigcap\limits_{i\in A}N_D^+(x_i)\Biggr)\cap \Biggl(\bigcap\limits_{j\in [3]\setminus A}N_D^-(x_j)\Biggr)$.
\end{center}
We can also write $N_D^{[3]}=N_D^+$ and $N_D^{\emptyset}=N_D^-$. For $i\in \{2,3\}$, and $A\subseteq [3]$, let 
\begin{center}
$V_i^A=V_i\cap N_D^A$.
\end{center}
We also write $V_i^+=V_i^{[3]}$ and $V_i^-=V_i^{\emptyset}$. By definition, we have 
\begin{center}
$V_i^-\rightarrow V_1\rightarrow V_i^+$, 
\end{center} 
and 
\begin{center}
$\{x_i\mid i\in A\}\rightarrow V_i^A\rightarrow \{x_i\mid i\in [3]\setminus A\}$ for $A\subseteq [3]$ with $\emptyset \neq A\neq [3]$. 
\end{center} 
Hence $\{V_i^A\mid A\subseteq [3]\}$ form a partition of $V_i$. When $A=\{\alpha\}\subseteq [3]$, we write $V_i^\alpha$ for $V_i^A$; when $A=\{\alpha,\beta\}\subseteq [3]$, we write $V_i^{\alpha\beta}$ for $V_i^A$. So we can write
\begin{center}
$\{V_i^A\mid A\subseteq [3]\}=\{V_i^+, V_i^1, V_i^2, V_i^3, V_i^{12}, V_i^{13}, V_i^{23}, V_i^-\}$.
\end{center}


\begin{lemma}\label{lem-diam2-ViA-VjB}
Let $3\leqslant p\leqslant q$, and let $D$ be a strong orientation of $\K(3,p,q)$ with diameter two. Suppose $\{i,j\}=\{2,3\}$ and $A\subseteq [3]$.
\begin{enumerate}
  \item[(1)] If $V_i^A\neq \emptyset$, then $V_j^A=\emptyset$.
  \item[(2)] Suppose $V_i^A\neq \emptyset$ and for any $B\subseteq [3]$, we have either $V_i^A\rightarrow V_j^B$ or $V_j^B\rightarrow V_i^A$, then $|V_i^A|=1$.
  \item[(3)] If $V_i^{+}\neq\emptyset$, then $V_i^{+}\rightarrow V_j$ and $|V_i^{+}|=1$;\\ 
      if $V_i^{-}\neq\emptyset$, then $V_j\rightarrow V_i^{-}$ and $|V_i^{-}|=1$.
  \item[(4)] Let $T_1=\{B\subseteq [3]\mid V_i^A\rightarrow V_j^B\text{ or }V_j^B\rightarrow V_i^A\}$ and $T_2=2^{[3]}\setminus T_1$. The set $V_j$ is partitioned into two sets $V_{j1}=\bigcup\limits_{B\in T_1}V_j^B$ and $V_{j2}=\bigcup\limits_{C\in T_2}V_j^C$. Suppose $q_A=|V_i^A|\geqslant 1$ and $p_2=|V_{j2}|\geqslant 1$. Then $q_A\leqslant \binom{p_2}{\lfloor\frac{p_2}{2}\rfloor}$. Here we call $(V_{j2},V_i^A)$ a mixed pair, which would be used frequently later. 
  \item[(5)] Take $B\subseteq [3]$ such that $A\subseteq B$ and $A\neq B$. If $V_i^A\neq\emptyset$ and $V_j^B\neq\emptyset$, then $V_j^B\rightarrow V_i^A$. In particular, let $\alpha,\beta\in [3]$ with $\alpha\neq \beta$, if $V_i^\alpha\neq\emptyset$ and $V_j^{\alpha\beta}\neq\emptyset$, we have $V_j^{\alpha\beta}\rightarrow V_i^\alpha$; furhtermore, if $V_i^{+}\neq\emptyset$ and $V_j^{-}\neq\emptyset$, then $V_i^+\rightarrow V_j^{\alpha\beta}\rightarrow V_i^\alpha\rightarrow V_j^-$. 
\end{enumerate} 
\end{lemma}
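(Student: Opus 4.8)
The plan is to derive all five parts from Lemma~\ref{lem-neighbour-diamter-2} together with one bookkeeping fact about the sets $N_D^A$: for $c\in N_D^C$ one has $N_D^-(c)\cap V_1=\{x_k\mid k\in C\}$ and $N_D^+(c)\cap V_1=\{x_k\mid k\notin C\}$, and the part containing $c$ meets neither $N_D^+(c)$ nor $N_D^-(c)$. Consequently, to prove $\partial_D(a,b)\ne 2$ for two appropriate vertices it suffices to verify $N_D^+(a)\cap N_D^-(b)=\emptyset$, which I can do by checking that the relevant index sets inside $V_1$ are disjoint and that the remaining contributions lie in disjoint parts of $V_2\cup V_3$; since $\diam(D)=2$, any conclusion of the form $\partial_D(a,b)\ge 3$ is a contradiction.

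For (1) I would take $u\in V_i^A$, $w\in V_j^A$ (adjacent, being in different parts) and orient the edge, say $u\to w$; then $N_D^+(w)\cap V_1=\{x_k\mid k\notin A\}$ and $N_D^-(u)\cap V_1=\{x_k\mid k\in A\}$ are disjoint, while $N_D^+(w)$ avoids $V_j$ and $N_D^-(u)$ avoids $V_i$, so $N_D^+(w)\cap N_D^-(u)=\emptyset$ and $\partial_D(w,u)\ge 3$. Part (5) is the same argument, with $A\subsetneq B$ making the index sets $\{x_k\mid k\notin B\}$ and $\{x_k\mid k\in A\}$ disjoint for any $u\in V_i^A$, $w\in V_j^B$ with $u\to w$; hence $V_j^B\to V_i^A$, the special case $A=\{\alpha\}\subsetneq\{\alpha,\beta\}=B$ gives $V_j^{\alpha\beta}\to V_i^\alpha$, and the displayed chain then combines this with $V_i^+\to V_j$ and $V_i\to V_j^-$ from part (3) and its $i\leftrightarrow j$ mirror.

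For (2) I would take distinct $u,u'\in V_i^A$; they are non-adjacent, so some $t$ gives $u\to t\to u'$, and the bookkeeping fact rules out $t\in V_1$ (it would need $k\notin A$ and $k\in A$) and $t\in V_i$, so $t\in V_j^B$ for some $B$, whereupon the hypothesis $V_i^A\to V_j^B$ or $V_j^B\to V_i^A$ contradicts one of the arcs $u\to t$, $t\to u'$; hence $|V_i^A|=1$. Part (4) runs the same way but now concludes $t\in V_{j2}$, so with $S(u)=N_D^+(u)\cap V_{j2}$ one gets $t\in S(u)\setminus S(u')$, and symmetrically $S(u')\setminus S(u)\ne\emptyset$, so $\{S(u)\mid u\in V_i^A\}$ is an independent collection of the $p_2$-element set $V_{j2}$ of size exactly $q_A$, and Lemma~\ref{lem-Sperner} gives $q_A\le\binom{p_2}{\lfloor p_2/2\rfloor}$. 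For (3) I would first prove $V_i^+\to V_j$ (if $w\in V_j$ had $w\to u\in V_i^{[3]}$, then $N_D^+(u)\cap N_D^-(w)=\emptyset$ since $u$ has no out-neighbour in $V_1$, forcing $\partial_D(u,w)\ge 3$), then apply (2) with $A=[3]$ to get $|V_i^+|=1$; the statement for $V_i^-$ follows by passing to $\rev(D)$, which is again a diameter-two strong orientation of $\K(3,p,q)$ and interchanges $V_i^+$ with $V_i^-$.

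Almost everything here is the same two-line argument reused; the one step that takes real care is (4), where independence of the family $\{S(u)\}$ has to be extracted from \emph{both} directions $\partial_D(u,u')=2$ and $\partial_D(u',u)=2$ before Sperner's Lemma applies — that is the heart of the proof, and it is precisely the point where the diameter-two hypothesis is used in full strength.
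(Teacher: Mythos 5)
Your proposal is correct, and for parts (1), (2), (3) and (5) it is essentially the paper's argument: in each case one exhibits a pair $a,b$ with $\partial_D(a,b)\geqslant 2$ and checks $N_D^+(a)\cap N_D^-(b)=\emptyset$ by separating the $V_1$-contributions (disjoint index sets) from the $V_2\cup V_3$-contributions (disjoint parts), then invokes Lemma~\ref{lem-neighbour-diamter-2}. Whether one phrases this as ``the neighbourhood sets are disjoint'' (the paper) or ``any candidate intermediate vertex $t$ is ruled out case by case'' (you, in (2) and (4)) is only a cosmetic difference.

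The one genuine divergence is part (4). The paper reduces to the induced bipartite orientation $F=D[V_{j2}\cup V_i^A]$ and cites Lemma~\ref{lem-Kpq-distance-4} (stated without proof) to produce $z,w\in V_i^A$ with $\partial_F(z,w)\geqslant 4$ when $q_A>\binom{p_2}{\lfloor p_2/2\rfloor}$, then derives a contradiction from $N_F^+(z)\cap N_F^-(w)=\emptyset$. You instead show directly that the traces $S(u)=N_D^+(u)\cap V_{j2}$, $u\in V_i^A$, form an independent collection of subsets of the $p_2$-set $V_{j2}$ --- using both $\partial_D(u,u')=2$ and $\partial_D(u',u)=2$ to get $S(u)\setminus S(u')\neq\emptyset$ and $S(u')\setminus S(u)\neq\emptyset$, whence the $S(u)$ are pairwise distinct and Lemma~\ref{lem-Sperner} gives $q_A\leqslant\binom{p_2}{\lfloor p_2/2\rfloor}$. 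This is the same combinatorial content (Lemma~\ref{lem-Kpq-distance-4} is itself a Sperner argument), but your version is self-contained where the paper leans on an unproved citation; the paper's version has the mild advantage of isolating the bipartite phenomenon as a reusable black box. Both are complete proofs.
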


\begin{proof}
Suppose $V_i^A\neq \emptyset$ and $V_j^A\neq\emptyset$. Take $u\in V_i^A$ and $v\in V_j^A$. We have either $u\rightarrow v$ or $v\rightarrow u$. The proof of these two cases are similar. We may assume $v\rightarrow u$, and so $\partial_D(u,v)\geqslant 2$. Then $N_D^+(u)\subseteq B\cup V_j$ and $N_D^-(v)\subseteq A\cup V_i$ where $B=[3]\setminus A$. Hence $N_D^+(u)\cap N_D^-(v)=\emptyset$. By Lemma~\ref{lem-neighbour-diamter-2}, we have $\partial_D(u,v)\neq 2$. Thus $\partial_D(u,v)\geqslant 3$, which is a contradiction. 

Let $S_1=\{B\subseteq [3]\mid V_i^A\rightarrow V_j^B\}$ and $S_2=\{B\subseteq [3]\mid V_j^B\rightarrow V_i^A\}$. Then $V_2$ is partitioned into two sets $V_{j1}=\bigcup\limits_{B\in S_1}V_j^B$ and $V_{j2}=\bigcup\limits_{B\in S_2}V_j^B$. We prove this statement by contradiction, and we suppose $|V_i^A|\geqslant 2$. Take $z_i,w_i\in V_i^A$, then $\partial_D(z_i,w_i)\geqslant 2$. Now $N_D^+(z_i)\subseteq V_1^+\cup V_{21}$ and $N_D^-(w_i)\subseteq V_1^-\cup V_{22}$ where $V_1^+=\{x_i\mid i\in [3]\setminus A\}$ and $V_1^-=\{x_j\mid j\in A\}$. Hence $N_D^+(z_i)\cap N_D^-(w_i)=\emptyset$. By Lemma~\ref{lem-neighbour-diamter-2}, we have $\partial_D(z_i,w_i)\neq 2$, and so $\partial_D(z_i,w_i)\geqslant 3$, which is a contradiction. 

Suppose $V_i^{+}\neq\emptyset$. Take any $y\in V_i^{+}$ and any $z\in V_j$. If $z\rightarrow y$, then $\partial_D(y,z)\geqslant 2$. We know $N_D^+(y)\subseteq V_j\setminus \{z\}$ and $N_D^-(z)\subseteq V_1\cup (V_i\setminus \{y\})$. So $N_D^+(y)\cap N_D^-(z)=\emptyset$. By Lemma~\ref{lem-neighbour-diamter-2}, we have $\partial_D(y,z)\neq 2$, and so $\partial_D(y,z)\geqslant 3$, a contradiction. Hence $y\rightarrow z$. This means $V_i^{+}\rightarrow V_j$. By the second statement of this lemma, we have $|V_i^{+}|=1$. The proof for the case $V_i^{-}\neq\emptyset$ is analogous. 

Let $F=D[V_{j2}\cup V_i^A]$, then $F$ is an orientation of $\K(p_2,q_A)$. Let $T_1^+=\{B\in T_1\mid V_i^A\rightarrow V_j^B\}$ and $T_1^-=\{B\in T_1\mid V_j^B\rightarrow V_i^A\}$. Then $V_{j1}$ is partitioned into two sets $V_{j1}^+=\bigcup\limits_{B\in T_1^+}V_j^B$ and $V_{j1}^-=\bigcup\limits_{B\in T_1^-}V_j^B$. If $p_2=1$, then by the second statement of this lemma, we have $q_A=1$. So we may suppose $p_2\geqslant 2$ and $q_A>\binom{p_2}{\lfloor\frac{p_2}{2}\rfloor}$. By Lemma~\ref{lem-Kpq-distance-4}, there exist vertices $z,w\in V_i^A$ such that $\partial_F(z,w)\geqslant 4$. We know $\partial_D(z,w)\geqslant 2$. Since $N_D^+(z)\subseteq ([3]\setminus A)\cup V_{j1}^+\cup V_{j2}$ and $N_D^-(w)\subseteq A\cup V_{j1}^-\cup V_{j2}$, we get $N_D^+(z)\cap N_D^-(w)\subseteq V_{j2}$. We know $N_D^+(z)\cap N_D^-(w)\cap V_{j2}=N_F^+(z)\cap N_F^-(w)=\emptyset$ by $\partial_F(z,w)\geqslant 4$. Hence $N_D^+(z)\cap N_D^-(w)=\emptyset$. By Lemma~\ref{lem-neighbour-diamter-2}, we have $\partial_D(z,w)\neq 2$. Thus $\partial_D(z,w)\geqslant 3$. A contradiction. 

Take any $y\in V_j^B$ and any $z\in V_i^A$. Then $N_D^+(y)\subseteq \overline{B}\cup V_i$ and $N_D^-(z)\subseteq \overline{A}\cup V_j$. Note that $A\cap \overline{B}=\emptyset$, so we have $N_D^+(y)\cap N_D^-(w)=\emptyset$. By Lemma~\ref{lem-neighbour-diamter-2}, we have $\partial_D(y,z)\neq 2$. Since $\partial_D(y,z)\leqslant 2$, we have $\partial_D(y,z)=1$, which means $y\rightarrow z$. Hence $V_j^B\rightarrow V_i^A$. 
\end{proof}

Let $D$ be an orientation of $\K(3,p,q)$ with diameter two. By the above results, in $D$, all the orientations between $V_i$ and $V_j$ are known except for those between $V_i^A$ and $V_j^B$ with $A,B\subseteq [3]$ satisfying $A\setminus B\neq\emptyset$ and $B\setminus A\neq\emptyset$.



\section{Big part: structural analysis of orientations with diameter two}\label{sec-big-part}

In this section, we consider all strong orientations with diameter two according to the partitions of $V_2$ of $\K(3,p,q)$ with $p\geqslant 5$. Let $D$ be a strong orientation of $\K(3,p,q)$, and let 
\begin{center}
$\mathbb{H}=\{V_2^+, V_2^1, V_2^2, V_2^3, V_2^{12}, V_2^{13}, V_2^{23}, V_2^-\}$. 
\end{center}
Then $\mathbb{H}$ form a partition of $V_2$. We will divide it into cases according to the number of nonempty sets in $\mathbb{H}$.

\subsection{There is exactly one nonempty set in $\mathbb{H}$}\label{subsec-H=1}

\begin{lemma}\label{lem-H=1-V2+}
Let $3\leqslant p\leqslant q$, and let $D$ be a strong orientation of $\K(3,p,q)$. If $V_2=V_2^+$ or $V_2^-$, then $\diam(D)\geqslant 3$.
\end{lemma}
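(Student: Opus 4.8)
The claim is that if $V_2 = V_2^+$ (all vertices of $V_2$ are out-neighbours of every $x_i$) or $V_2 = V_2^-$ (all vertices of $V_2$ are in-neighbours of every $x_i$), then no diameter-two orientation exists. By the $\rev$ symmetry (reversing all arcs turns $V_2 = V_2^+$ into $V_2 = V_2^-$ and preserves diameter), it suffices to treat the case $V_2 = V_2^+$, i.e. $V_1 \rightarrow V_2$. The plan is to argue by contradiction: assume $\diam(D) = 2$ and derive a distance-$\geqslant 3$ pair.

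First I would record the immediate structural consequences of $V_1 \rightarrow V_2$. Since every arc between $V_1$ and $V_2$ points from $V_1$ into $V_2$, each $x_i$ has $N_D^+(x_i) \supseteq V_2$ and $N_D^-(x_i) \subseteq V_1 \cup V_3$; in fact $N_D^-(x_i) \subseteq V_3$ because $V_1$ is independent. For $D$ to be strong, each $x_i$ must have an in-neighbour, so each $x_i$ receives an arc from some vertex of $V_3$; thus $V_3^- = \emptyset$ is impossible and more precisely $N_D^+(V_3) \supseteq V_1$ — i.e., for every $i$ there is $z \in V_3$ with $z \rightarrow x_i$. Symmetrically, strong connectivity also forces every $z \in V_3$ to be reachable, but the key pressure point will be the vertices of $V_2$: a vertex $y \in V_2$ has $N_D^-(y) \supseteq V_1$, hence $N_D^-(y) \cap V_3$ is the only place its "fresh" in-neighbours outside $V_1$ can live, and more importantly its out-neighbours satisfy $N_D^+(y) \subseteq V_1 \cup V_3$, but $y \not\rightarrow x_i$ for any $i$, so $N_D^+(y) \subseteq V_3$.

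The heart of the argument: consider two vertices $y, y' \in V_2$ (using $p \geqslant 3$, in fact $p\geqslant 5$ in our setting, so there are plenty). We have $N_D^+(y) \subseteq V_3$ and $N_D^-(y') \subseteq V_1 \cup V_3$; since $V_1 \rightarrow V_2$ we cannot route $y \to x_i \to y'$, so a directed $y$–$y'$ path of length two must pass through $V_3$, forcing $N_D^+(y) \cap N_D^-(y') \neq \emptyset$ for every ordered pair, by Lemma~\ref{lem-neighbour-diamter-2}. The same must hold for $y' , y$. So the family $\{N_D^+(y) \cap V_3 : y \in V_2\}$ must be such that no two members are "crossing-disjoint": for all $y \neq y'$, $N_D^+(y) \cap (V_3 \setminus N_D^+(y')) = \emptyset$ or — more carefully — we need $N^+(y)\cap N^-(y')\neq\emptyset$, and since $N^-(y')\cap V_3 = V_3\setminus N^+(y')$ (every $V_2$–$V_3$ edge is oriented one way or the other), this says $N^+(y)\cap V_3 \not\subseteq N^+(y')\cap V_3$ would be needed to be avoided... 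I would instead directly invoke Lemma~\ref{lem-Kpq-distance-4} on the bipartite subgraph $D[V_2 \cup V_3]$: it is an orientation of $\K(p,q)$ with $p \leqslant q$, and actually the cleaner route is to apply it to $\K(p, q)$ only when $q > \binom{p}{\lfloor p/2\rfloor}$, which need not hold here, so that alone is not enough. The right move is: for distinct $y,y' \in V_2$, reachability $y \rightsquigarrow y'$ in two steps inside $D$ must use a vertex of $V_3$ (as $x_i \not\rightarrow y'$ never — wait, $x_i \rightarrow y'$ always, but $y \not\rightarrow x_i$), so indeed every length-two $y$–$y'$ walk lies in $V_3$; hence for all ordered pairs $(y,y')$ we need a common vertex $z \in V_3$ with $y \to z \to y'$. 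This is a strong Sperner-type restriction on the sets $S_y := N_D^+(y) \cap V_3$: for all $y \neq y'$ we need $S_y \cap (V_3 \setminus S_{y'}) \neq \emptyset$, i.e. $S_y \not\subseteq S_{y'}$. Applying this to both orders, $\{S_y : y \in V_2\}$ is an antichain under inclusion of size $p \geqslant 5$, which is fine by itself. The genuine contradiction must come from also tracking distances $x_i \to z$, $z \to x_j$, and $z \to z'$ within $V_3$; I expect the argument to combine the antichain condition on $\{S_y\}$ with the fact that some $x_i$ has $N_D^-(x_i) \cap V_3$ small, producing a pair in $V_3$ at distance $\geqslant 3$, or a $V_2$-vertex unreachable from some $x_i$ in two steps. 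I would chase: pick $x_1$; it must reach every $z \in V_3$ in $\leqslant 2$ steps, but $N_D^+(x_1) \cap V_3 = V_3 \setminus N_D^-(x_1)$ wait $x_1$ has no $V_3$ out-neighbours unless via... $x_1 \to y \to z$, so every $z \in V_3$ has an in-neighbour among $V_2$, i.e. $\bigcup_y S_y = V_3$; combined with each $z \in V_3$ needing to reach $x_1$ in $\leqslant 2$ steps ($z \to x_1$ directly, or $z \to z'' \to$? no, or $z \to y \to$? but $y \not\to x_1$), so $z \to x_1$ directly or $z \to z' \to x_1$.

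\textbf{Main obstacle.} The bookkeeping of which $V_3$-internal and $V_3$–$V_1$ orientations are forced is the crux; I expect the clean contradiction to be: the vertices of $V_3$ that send an arc to $x_1$ form a set $W_1$, those to $x_2$ form $W_2$, to $x_3$ form $W_3$, each nonempty; a vertex $z \notin W_i$ must reach $x_i$ via $z \to z' \to x_i$ with $z' \in W_i$, so $V_3 = W_i \cup N_D^+_{V_3}(W_i)$... wait $N_D^-(W_i)$. Pinning down the interaction between these three covering conditions on $V_3$, the antichain $\{S_y\}$, and $\bigcup S_y = V_3$, and squeezing out a distance-$3$ pair (most likely two vertices $z, z' \in V_3$ with $N_D^+(z) \cap N_D^-(z') = \emptyset$), is where the real work lies; I would organize it by choosing a vertex $z^* \in V_3$ with $N_D^+(z^*) \cap V_3$ as small as possible and show it is at distance $\geqslant 3$ from a suitable target. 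I anticipate the author does something slicker — possibly just observing that $V_2 = V_2^+$ already means $x_1$ (say) has no way to reach $x_2$ except through $V_3$, and iterating the neighbour-disjointness lemma twice — but the plan above is the route I would take, with the $V_3$-covering analysis as the main hurdle.
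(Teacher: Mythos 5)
There is a genuine gap: your write-up is a plan that never closes, and the decisive step is missing. You correctly reduce to $V_2=V_2^+$ via $\rev(D)$, correctly observe that $N_D^+(y)\subseteq V_3$ for every $y\in V_2$, and correctly derive that the sets $S_y=N_D^+(y)\cap V_3$ must form an antichain — and then you stall, proposing an elaborate covering analysis of $V_3$ (the sets $W_i$, a minimal $z^*$, etc.) that you do not carry out and that is in fact unnecessary. The pair of vertices you never test is the one that settles everything: take $y\in V_2$ and $z\in V_3$ with $z\rightarrow y$. Then $N_D^+(y)\subseteq V_3\setminus\{z\}$ while $N_D^-(z)\subseteq V_1\cup V_2$, so these sets are disjoint and Lemma~\ref{lem-neighbour-diamter-2} gives $\partial_D(y,z)\geqslant 3$. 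Hence, under the assumption $\diam(D)=2$, no such pair exists, i.e.\ $V_2\rightarrow V_3$. But then every $S_y$ equals $V_3$, which violates the antichain condition you yourself derived (equivalently and more directly: for $y,w\in V_2$ we now have $N_D^+(y)\subseteq V_3$ and $N_D^-(w)\subseteq V_1$, disjoint, so $\partial_D(y,w)\geqslant 3$). That two-line case split is the entire proof.

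This is exactly what the paper does: its Lemma~\ref{lem-diam2-ViA-VjB}(3) forces $V_2^+\rightarrow V_3$ (by the same disjoint-neighbourhood argument applied to the pair $(y,z)$ with $z\rightarrow y$), after which two vertices of $V_2$ cannot reach each other in two steps. Your instinct that "the author does something slicker" is right, but the slicker step is not optional bookkeeping — without it your antichain observation alone is, as you note, "fine by itself" and yields no contradiction, so the proposal as written does not prove the lemma.
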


\begin{proof}
Suppose $V_2=V_2^+$. By Lemma~\ref{lem-diam2-ViA-VjB}, we have $V_1\rightarrow V_2\rightarrow V_3$. Since $p=|V_2|>2$, we take $y,w\in V_2$, and so we have $\partial_D(y,w)\geqslant 3$. A contradiction. When $V_2=V_2^-$, this case in $D$ is the same situation as $V_2=V_2^+$ in $\rev(D)$. 
\end{proof}

\begin{lemma}\label{lem-H=1-V2A}
Let $3\leqslant p\leqslant q$, and let $D$ be a strong orientation of $\K(3,p,q)$. If $V_2=V_2^A$ for some nonempty and proper subset $A$ of $[3]$, then $\diam(D)\geqslant 3$.
\end{lemma}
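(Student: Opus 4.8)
The plan is to argue by contradiction, supposing $\diam(D)=2$ while $V_2=V_2^A$ for a fixed nonempty proper $A\subsetneq[3]$. Up to symmetry of $[3]$ and up to replacing $D$ by $\rev(D)$ (which swaps a singleton $A$ with its two-element complement), it suffices to treat the single representative case $A=\{1\}$, i.e. $x_1\rightarrow V_2$ and $V_2\rightarrow\{x_2,x_3\}$. First I would record what this forces about distances inside $V_1\cup V_2$: since all of $V_2$ is ``below'' $x_1$ and ``above'' $x_2,x_3$, the vertex $x_1$ reaches $x_2$ and $x_3$ only through $V_2$, and the edge between $x_2$ and $x_3$ must then be oriented so as to make $D$ strong and keep diameters at most two — this pins down the orientation of $x_2x_3$ (say $x_2\rightarrow x_3$ after relabelling) and shows $x_1$ must have an in-neighbour, which can only lie in $V_3$, so some $z\in V_3$ satisfies $z\rightarrow x_1$.

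Next I would turn to the partition of $V_3$ into the eight classes $V_3^B$, $B\subseteq[3]$, and use Lemma~\ref{lem-diam2-ViA-VjB} to eliminate most of them. By part~(1), $V_3^{\{1\}}=\emptyset$ since $V_2^{\{1\}}=V_2\neq\emptyset$. By part~(5) with $i=3,j=2$, any nonempty $V_3^B$ with $\{1\}\subsetneq B$ — that is $V_3^{12}$, $V_3^{13}$, $V_3^+$ — must satisfy $V_3^B\rightarrow V_2$; but we also have $x_1\rightarrow V_2$, and combined with part~(3) (forcing $|V_3^+|\le 1$, $V_3^+\rightarrow V_2$) and the mixed-pair bound of part~(4), the ``above $V_2$'' side cannot be too large. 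The real content is that $V_2$ is a single undifferentiated block, so for \emph{every} $z\in V_3$ we have either $z\rightarrow V_2$ or $V_2\rightarrow z$ (there is no room for $z$ to be independent with $V_2$ in the required sense), hence every $V_3^B$ forms a mixed pair with $V_2$ on one side; then part~(4) gives $|V_2|=p\le\binom{1}{0}=1$, i.e. $p\le 1$, contradicting $p\ge 3$.

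Let me say that last step more carefully, since it is the crux. For a vertex $z\in V_3^B$, the orientation between $z$ and $V_2=V_2^{\{1\}}$ is determined by Lemma~\ref{lem-diam2-ViA-VjB} unless $\{1\}\setminus B\neq\emptyset$ \emph{and} $B\setminus\{1\}\neq\emptyset$, i.e. unless $1\notin B$ and $B\neq\emptyset$ — that is, unless $B\in\{\{2\},\{3\},\{2,3\}\}$. So the only classes of $V_3$ that might be ``split'' relative to $V_2$ are $V_3^{2}$, $V_3^{3}$, $V_3^{23}$. For each such $z$, apply Lemma~\ref{lem-neighbour-diamter-2} to a pair of vertices in $V_2$: given $y,y'\in V_2$, the out-neighbourhood of $y$ lies in $\{x_2,x_3\}\cup V_3$ and the in-neighbourhood of $y'$ lies in $\{x_1\}\cup V_3$, so $N_D^+(y)\cap N_D^-(y')\subseteq V_3$; thus $D[V_3'\cup V_2]$ with $V_3'=V_3^{2}\cup V_3^{3}\cup V_3^{23}$ must already realise all distances between pairs in $V_2$ within distance two, i.e. it is an orientation of a complete bipartite graph in which no two vertices of the $V_2$-side are at distance $\ge 4$ — but if $|V_2'|:=|V_3'|$ is small this is impossible when $p$ is large. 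Concretely, Lemma~\ref{lem-Kpq-distance-4} forces $p\le\binom{|V_3'|}{\lfloor|V_3'|/2\rfloor}$ whenever $|V_3'|\ge 2$, while if $|V_3'|\le 1$ then (by part~(2)-type reasoning) $p\le 1$; and separately the classes $V_3\setminus V_3'$ all send their edges uniformly to or from $V_2$, contributing nothing. Combining with the forced structure on $V_1$ and on the remaining $V_3$-classes, and with $p\ge 3$, yields the contradiction. The main obstacle I anticipate is the bookkeeping for the ``split'' classes $V_3^{2},V_3^{3},V_3^{23}$: one must check that even allowing these to be nonempty, the sub-bipartite-graph argument still caps $p$ below $3$ — equivalently that $V_3'$ is forced to have size at most $1$ — and this requires carefully combining Lemma~\ref{lem-diam2-ViA-VjB}(4) (the mixed-pair bound $q_A\le\binom{p_2}{\lfloor p_2/2\rfloor}$, applied now with the roles of $V_2$ and $V_3$ so that $p=|V_2|$ plays the part of $q_A$) with the requirement that $D$ restricted to these vertices remain consistent with a global diameter-two orientation, rather than treating the bipartite piece in isolation.
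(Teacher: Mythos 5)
There is a genuine gap, and it sits exactly where you flagged it. Your plan reduces the contradiction to showing that the union of ``split'' classes $V_3'=V_3^{2}\cup V_3^{3}\cup V_3^{23}$ has size at most $1$, but you never prove this, and the route you propose cannot deliver it: the mixed-pair inequality of Lemma~\ref{lem-diam2-ViA-VjB}(4), applied to pairs inside $V_2$, only gives $p\leqslant\binom{|V_3'|}{\lfloor |V_3'|/2\rfloor}$, and since $|V_3'|$ can a priori be as large as $q\geqslant p$, this is vacuous (e.g.\ $p=3$, $|V_3'|=3$ satisfies it). In other words, distances between pairs of vertices of $V_2$ simply do not produce the contradiction; the obstruction lives elsewhere. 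There is also a structural error in your first paragraph: $x_2$ and $x_3$ both lie in the partite set $V_1$, so there is no edge $x_2x_3$ to orient, and $x_1$ can reach $x_2$ through $V_3$ as well as through $V_2$, so nothing is ``pinned down'' there.

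The contradiction actually comes from pairs involving the two vertices of $V_1$ on the receiving side of $V_2$ (in your normalization $A=\{1\}$, these are $x_2,x_3$ with $V_2\rightarrow\{x_2,x_3\}$), which is what the paper does (after normalizing instead to $|A|\geqslant 2$ so that the sending side $V_{11}$ has two elements). Concretely: if some $z\in V_3$ has $z\rightarrow x_j$ for $j\in\{2,3\}$, then $N_D^+(x_j)\subseteq V_3$ (because $V_2\rightarrow x_j$) while $N_D^-(z)\subseteq V_1\cup V_2$, so $\partial_D(x_j,z)\geqslant 3$ by Lemma~\ref{lem-neighbour-diamter-2}; otherwise $\{x_2,x_3\}\rightarrow V_3$, whence $N_D^+(x_2)\subseteq V_3$ and $N_D^-(x_3)\subseteq V_2$, giving $\partial_D(x_2,x_3)\geqslant 3$. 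This two-case argument needs only Lemma~\ref{lem-neighbour-diamter-2} and none of the $V_3^B$ bookkeeping. Your partition analysis of $V_3$ is not wrong as far as it goes, but it isolates the wrong pairs of vertices, and as written the proof does not close.
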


\begin{proof}
Suppose $V_2=V_2^A$. We have $V_{11}\rightarrow V_2\rightarrow V_{12}$ where $V_{11}=\{x_i\mid i\in A\}$, $V_{12}=\{x_i\mid i\in \overline{A}\}$, and $\overline{A}=[3]\setminus A$. We know $|A|\geqslant 2$ or $|\overline{A}|\geqslant 2$. The proofs for these two cases are similar, so we may assume $|A|\geqslant 2$. Then $|V_{11}|\geqslant 2$. If there exist vertices $x\in V_{11}$ and $z\in V_3$ such that $x\rightarrow z$, then $\partial_D(z,x)\geqslant 2$. Since $N_D^+(z)\subseteq V_1\cup V_2$ and $N_D^-(x)\subseteq V_3$, we have $N_D^+(z)\cap N_D^-(x)=\emptyset$. By Lemma~\ref{lem-neighbour-diamter-2}, we have $\partial_D(z,x)\neq 2$. This means $\partial_D(z,x)\geqslant 3$. Now suppose for any $x\in V_{11}$ and any $z\in V_3$ we have $z\rightarrow x$, i.e., $V_3\rightarrow V_{11}$. Take two vretices $x,w\in V_{11}$, then $\partial_D(x,w)\geqslant 2$. Since $N_D^+(x)\subseteq V_2$ and $N_D^-(w)\subseteq V_3$, we have $N_D^+(x)\cap N_D^-(w)=\emptyset$. By Lemma~\ref{lem-neighbour-diamter-2}, we have $\partial_D(x,w)\neq 2$. This means $\partial_D(x,w)\geqslant 3$. 
\end{proof}

By the above lemmas in this subsection, we have the following result. 

\begin{theorem}\label{thm-H=1}
Let $3\leqslant p\leqslant q$, and let $D$ be a strong orientation of $\K(3,p,q)$. If $|\mathbb{H}|=1$, then $\diam(D)\geqslant 3$.
\end{theorem}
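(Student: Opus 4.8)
The plan is to reduce Theorem~\ref{thm-H=1} to the two lemmas already established in this subsection. First I would record that $\mathbb{H}=\{V_2^+, V_2^1, V_2^2, V_2^3, V_2^{12}, V_2^{13}, V_2^{23}, V_2^-\}$ is a partition of $V_2$ into eight (possibly empty) blocks, indexed by the eight subsets of $[3]$. Hence the hypothesis $|\mathbb{H}|=1$, which says that exactly one of these blocks is nonempty, forces that single nonempty block to be all of $V_2$. In other words, there is a unique $A\subseteq[3]$ with $V_2=V_2^A$.

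Next I would split on the shape of $A$. If $A=[3]$ then $V_2=V_2^{[3]}=V_2^+$, and if $A=\emptyset$ then $V_2=V_2^{\emptyset}=V_2^-$; in either of these two extreme cases Lemma~\ref{lem-H=1-V2+} applies directly and yields $\diam(D)\geqslant 3$. In the remaining case $A$ is a nonempty proper subset of $[3]$, and then Lemma~\ref{lem-H=1-V2A} applies and again yields $\diam(D)\geqslant 3$. Since the three cases $A=[3]$, $A=\emptyset$, and $\emptyset\neq A\neq[3]$ exhaust all subsets of $[3]$, the conclusion follows in all cases.

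There is no genuine obstacle here: the theorem is simply the assembly of the subsection's two lemmas, and the only point requiring care is matching the enumeration of subsets of $[3]$ with the block list of $\mathbb{H}$ — the two ``extreme'' subsets $[3]$ and $\emptyset$ handled by Lemma~\ref{lem-H=1-V2+}, and the six intermediate ones handled by Lemma~\ref{lem-H=1-V2A}. All the real work lives inside those two lemmas, each of which, under the assumption $V_2=V_2^A$, exhibits two vertices at directed distance at least three by applying Lemma~\ref{lem-neighbour-diamter-2} to a suitably disjoint pair of out- and in-neighbourhoods.
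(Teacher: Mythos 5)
Your proposal is correct and matches the paper's own argument, which likewise derives Theorem~\ref{thm-H=1} directly from Lemma~\ref{lem-H=1-V2+} and Lemma~\ref{lem-H=1-V2A}; your explicit three-way split on $A=[3]$, $A=\emptyset$, and $\emptyset\neq A\neq[3]$ is exactly the (unstated) case analysis behind the paper's one-line assembly.
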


\subsection{There are exactly two nonempty sets in $\mathbb{H}$}\label{subsec-H=2}

Let $3\leqslant p\leqslant q$, and let $D$ be a strong orientation of $\K(3,p,q)$ with diameter two. If there are exactly two nonempty sets in $\mathbb{H}$, by Lemma~\ref{lem-diam2-ViA-VjB}, $V_2\neq V_2^+\cup V_2^-$, hence $V_2=V_2^+\cup V_2^A$, $V_2=V_2^-\cup V_2^B$, or $V_2=V_2^A\cup V_2^B$ where $A$ and $B$ are nonempty proper subsets of $[3]$. The case $V_2=V_2^-\cup V_2^B$ in $D$ is the same situation as $V_2=V_2^+\cup V_2^A$ in $\rev(D)$ where $A=[3]\setminus B$. Hence we only need to consider two cases $V_2=V_2^+\cup V_2^A$ and $V_2=V_2^A\cup V_2^B$.

\begin{lemma}\label{lem-H=2-V2+V2A}
Let $3\leqslant p\leqslant q$, and let $D$ be a strong orientation of $\K(3,p,q)$ with diameter two. If $V_2=V_2^+\cup V_2^A$ for some nonempty and proper subset $A$ of $[3]$, then $q\leqslant 1+3\binom{p-1}{\lfloor\frac{p-1}{2}\rfloor}$.
\end{lemma}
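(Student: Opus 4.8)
The plan is to show that, under the hypothesis $V_2 = V_2^+ \cup V_2^A$, each vertex $z\in V_3$ is forced into one of a small number of "types," and that vertices of the same type are too close together to coexist unless Sperner-type bounds are respected. First I would record what is already known about the orientations incident to $V_2$: by Lemma~\ref{lem-diam2-ViA-VjB}(3), since $V_2^+\neq\emptyset$ we have $|V_2^+|=1$, say $V_2^+=\{y_0\}$, and $V_2^+\rightarrow V_3$; and by Lemma~\ref{lem-diam2-ViA-VjB}(1) the set $V_3^+$ is empty. The set $V_2^A$ satisfies $\{x_i\mid i\in A\}\rightarrow V_2^A\rightarrow\{x_i\mid i\in\overline A\}$. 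Write $\overline A=[3]\setminus A$; since $A$ is a nonempty proper subset, exactly one of $A,\overline A$ is a singleton. I would split on this: it suffices (up to the $\rev$ trick already used in the subsection) to treat one of the two, so assume $|A|=1$, say $A=\{1\}$, hence $\overline A=\{2,3\}$ and $\{x_1\}\rightarrow V_2^{1}\rightarrow\{x_2,x_3\}$.

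Next I would classify $z\in V_3$ by its relation to $x_1,x_2,x_3$, i.e.\ by which $N_D^B$ it lies in; since $V_3^+=\emptyset$, $z$ lies in one of $V_3^1,V_3^2,V_3^3,V_3^{12},V_3^{13},V_3^{23},V_3^-$. The key is that the orientation between $z$ and $y_0$ is \emph{fixed} ($y_0\rightarrow z$), and the orientations between $z$ and $V_2^1$ are controlled: by Lemma~\ref{lem-diam2-ViA-VjB}(5), if $V_3^{B}\neq\emptyset$ with $\{1\}\subseteq B$, $B\neq\{1\}$, then $V_3^B\rightarrow V_2^1$; and conversely for $B$ with $B\cap\{2,3\}$ behaviour one gets $V_2^1\rightarrow V_3^B$ in the complementary situations. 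So for each class $V_3^B$ the mixed pair $(V_{j2}, \cdot)$ machinery of Lemma~\ref{lem-diam2-ViA-VjB}(4) applies: the part of $V_2$ that is "mixed" (neither all-in nor all-out) relative to a given $z$ is contained in $V_2^1$, which has size $p-1$, so the number of vertices of $V_3$ sharing the same orientation pattern toward $\{x_1,x_2,x_3,y_0\}$ and toward the non-mixed part of $V_2$ is at most $\binom{p-1}{\lfloor (p-1)/2\rfloor}$. I would make this precise by applying Lemma~\ref{lem-neighbour-diamter-2} / Lemma~\ref{lem-Kpq-distance-4} to $D[V_2^1\cup W]$ for $W$ a candidate large subclass of $V_3$, exactly as in the proof of part (4).

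Then the counting: the classes of $V_3$ collapse into a bounded number of "mixed-pair blocks," and I expect the three proper one-or-two-element index sets compatible with $A=\{1\}$ to give exactly three blocks, each of size at most $\binom{p-1}{\lfloor (p-1)/2\rfloor}$, plus possibly one extra distinguished vertex (the analogue of $y_0$ on the $V_3$ side, e.g.\ a vertex of $V_3^-$, which Lemma~\ref{lem-diam2-ViA-VjB}(3) forces to be unique), yielding $q=|V_3|\leqslant 1+3\binom{p-1}{\lfloor (p-1)/2\rfloor}$. The main obstacle is bookkeeping the $2^{[3]}$-indexed classes correctly: one must check that the various partial orientations forced by Lemma~\ref{lem-diam2-ViA-VjB}(5) are mutually consistent, that two distinct classes $V_3^B,V_3^{B'}$ with the same mixed behaviour toward $V_2^1$ really do merge into a single Sperner-bounded antichain in $D[V_2^1\cup(V_3^B\cup V_3^{B'})]$ (this needs that vertices across the two classes are still at distance $\geqslant 2$ in $D$ in the relevant direction, which comes from disjointness of out/in-neighbourhoods outside $V_2^1$), and that the "$+1$" is not double-counted. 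I would organize this as a short case table over the (at most three) possible mixed-patterns and sum. The one genuinely delicate point is confirming that no class can interact with $V_2^1$ in a way that escapes the mixed-pair bound — i.e.\ that $V_2^1$ is always either entirely dominated by or entirely dominating each $z$, \emph{except} for at most one block per pattern — and that is where I'd spend the most care.
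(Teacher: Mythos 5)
Your overall strategy (classify $V_3$ by the sets $V_3^B$, bound the classes that form mixed pairs with $V_2^1$ by $\binom{p-1}{\lfloor\frac{p-1}{2}\rfloor}$, and add a constant for the remaining singleton classes) is the same as the paper's, but there are two concrete gaps. First, the reduction of $|A|=2$ to $|A|=1$ "up to the $\rev$ trick" does not work: reversing all arcs sends the configuration $V_2=V_2^+\cup V_2^A$ to $V_2=V_2^-\cup V_2^{\overline{A}}$, which is the \emph{other} case already delegated to reversal in this subsection, not back to $V_2=V_2^+\cup V_2^{A'}$ with $|A'|=1$. The case $|A|=2$ must be handled on its own; the paper does so by showing it is impossible (with $A=\{1,2\}$ one gets $x_1\rightarrow V_2$, $x_2\rightarrow V_2$, hence $V_3\rightarrow x_1$ and $V_3\rightarrow x_2$ from $\diam(V_3,x_i)\leqslant 2$, whence $N_D^+(x_1)\cap N_D^-(x_2)=\emptyset$ and $\partial_D(x_1,x_2)\geqslant 3$).

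Second, with $A=\{1\}$ your own enumeration leaves $V_3^{12}$ and $V_3^{13}$ alive (and you never rule out $V_3^1$ explicitly either, though that one follows from Lemma~\ref{lem-diam2-ViA-VjB}(1)). These two classes are not mixed with $V_2^1$ — by part (5) they satisfy $V_3^{12}\cup V_3^{13}\rightarrow V_2^1$ — so by part (2) each has size at most $1$, and your count would then close only at $q\leqslant 3+3\binom{p-1}{\lfloor\frac{p-1}{2}\rfloor}$, not the claimed $1+3\binom{p-1}{\lfloor\frac{p-1}{2}\rfloor}$. The missing observation is that $x_1\rightarrow V_2$ forces $N_D^-(x_1)\subseteq V_3$, so $\diam(V_3,x_1)\leqslant 2$ forces $V_3\rightarrow x_1$ and hence $V_3^{12}=V_3^{13}=\emptyset$; only then does $V_3$ reduce to $V_3^2\cup V_3^3\cup V_3^{23}\cup V_3^-$ with $|V_3^-|\leqslant 1$, giving the stated bound. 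This is exactly the "delicate point" you flagged but did not resolve.
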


\begin{proof}
We know $|A|=1$ or $|A|=2$. First we consider the case $|A|=2$, we may suppose $A=\{x_1,x_2\}$. Then $V_2=V_2^{+}\cup V_2^{12}$, $V_3=V_3^1\cup V_3^2\cup V_3^3\cup V_3^{13}\cup V_3^{23}\cup V_3^-$. We have $x_1\rightarrow V_2$ and $x_2\rightarrow V_2$. Since $\diam(V_3,x_1)\leqslant 2$ and $\diam(V_3,x_2)\leqslant 2$, we have $V_3\rightarrow x_1$ and $V_3\rightarrow x_2$. So $V_3=V_3^{3}\cup V_3^-$. Since $N_D^+(x_1)\cap N_D^-(x_2)=V_2\cap V_3=\emptyset$, by Lemma~\ref{lem-neighbour-diamter-2}, we have $\partial_D(x_1,x_2)\geqslant 3$, which is a contradiction. Hence we have $|A|=1$. 

We may assume $A=\{x_1\}$. Then $V_2=V_2^+\cup V_2^1$, $V_3=V_3^2\cup V_3^3\cup V_3^{12}\cup V_3^{23}\cup V_3^{13}\cup V_3^-$. We know $x_1\rightarrow V_2$, since $\diam(V_3,x_1)\leqslant 2$, then $V_3\rightarrow x_1$ and so $V_3^{12}=\emptyset$ and $V_3^{13}=\emptyset$. This means $V_3=V_3^2\cup V_3^3\cup V_3^{23} \cup V_3^-$. By Lemma~\ref{lem-diam2-ViA-VjB}, we get $V_2^+\rightarrow V_3$, $V_2\rightarrow V_3^-$ and $V_2^1\rightarrow V_3^-$. So $(V_2^1,V_3^2)$, $(V_2^1,V_3^3)$, $(V_2^1,V_3^{23})$ are mixed pairs. Hence $q=|V_3|\leqslant 1+3\binom{p-1}{\lfloor\frac{p-1}{2}\rfloor}$.
\end{proof}

\begin{lemma}\label{lem-H=2-V2AV2B-eq}
Let $3\leqslant p\leqslant q$, and let $D$ be a strong orientation of $\K(3,p,q)$ with diameter two. If $V_2=V_2^A\cup V_2^B$ for two nonempty proper subsets $A$ and $B$ of $[3]$ with $|A|=|B|$, then $q\leqslant \max\Bigl\{2+\binom{p-1}{\lfloor\frac{p-1}{2}\rfloor}+\binom{p}{\lfloor\frac{p}{2}\rfloor}, 1+2\binom{p-2}{\lfloor\frac{p-2}{2}\rfloor}+\binom{p}{\lfloor\frac{p}{2}\rfloor}\Bigr\}$.
\end{lemma}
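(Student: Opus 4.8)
Since $|A|=|B|$ and $A\neq B$, both are either singletons or two-element subsets of $[3]$; the two-element case is obtained from the singleton case by passing to $\rev(D)$ (which swaps the roles of out- and in-neighbours, turning a partition of $V_2$ into singletons into a partition into pairs), so I would reduce to $|A|=|B|=1$, say $A=\{x_1\}$ and $B=\{x_2\}$, i.e.\ $V_2=V_2^1\cup V_2^2$. The first step is to locate $V_3$. From $x_1\rightarrow V_2^1$ and $x_2\rightarrow V_2^1$ and the diameter-two constraint $\diam(V_3,x_2)\le 2$ we must have (arguing as in Lemma~\ref{lem-H=2-V2+V2A}, via Lemma~\ref{lem-neighbour-diamter-2}) that large chunks of $V_3$ point back into $\{x_1,x_2\}$; I would carry this out carefully to show that $V_3$ can only contain blocks $V_3^A$ with $A$ disjoint from or containing $\{1,2\}$ in the appropriate sense — concretely I expect $V_3=V_3^3\cup V_3^{13}\cup V_3^{23}\cup V_3^{-}$ together with at most a bounded number of "exceptional" vertices forced into the remaining blocks by the need to reach $x_1$ and $x_2$ in two steps. (The $+$ block $V_3^+$ is killed by Lemma~\ref{lem-diam2-ViA-VjB}(3) since $V_2\neq\emptyset$ would then have to be dominated by a single vertex; the blocks $V_3^1$, $V_3^2$, $V_3^{12}$ are the ones needing the mixed-pair bookkeeping.)

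Next I would apply Lemma~\ref{lem-diam2-ViA-VjB}(5) to pin down all arcs between comparable blocks: $V_3^{13}\rightarrow V_2^1$, $V_3^{23}\rightarrow V_2^2$, $V_2\rightarrow V_3^{-}$, and (if it occurs) $V_2^{1}\rightarrow V_3^{12}$-type containments, so that the only undetermined orientations are between genuinely independent pairs $(V_2^\alpha, V_3^B)$ with $\alpha\notin B\not\ni$ the complementary index. For each such independent block the relevant constraint is that $(V_{3,\mathrm{bad}},V_2^\alpha)$ is a mixed pair in the sense of Lemma~\ref{lem-diam2-ViA-VjB}(4), where $V_{3,\mathrm{bad}}$ is the union of the $V_3$-blocks that are neither dominated by nor dominating $V_2^\alpha$. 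Writing $p_1=|V_2^1|$, $p_2=|V_2^2|$ with $p_1+p_2=p$, the mixed-pair inequalities give bounds of the form $|V_3^{13}|+\cdots \le \binom{p_1}{\lfloor p_1/2\rfloor}$ and similarly with $p_2$, plus $\binom{p}{\lfloor p/2\rfloor}$ for whatever portion of $V_3$ is independent with all of $V_2$ (e.g.\ a $V_3^3$ block), plus a constant $1$ or $2$ for the forced singleton blocks $V_3^{+}$-type remnants. Summing and then optimising the resulting expression in $(p_1,p_2)$ over $p_1+p_2=p$ — using unimodality of $\binom{n}{\lfloor n/2\rfloor}$, so that the sum $\binom{p_1}{\lfloor p_1/2\rfloor}+\binom{p_2}{\lfloor p_2/2\rfloor}$ is maximised at an extreme split $(1,p-1)$ or $(2,p-2)$ — produces exactly the two competing quantities $2+\binom{p-1}{\lfloor (p-1)/2\rfloor}+\binom{p}{\lfloor p/2\rfloor}$ and $1+2\binom{p-2}{\lfloor (p-2)/2\rfloor}+\binom{p}{\lfloor p/2\rfloor}$, whose maximum is the claimed bound.

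The main obstacle I anticipate is the bookkeeping in the first two steps: correctly enumerating which $V_3$-blocks can be nonempty, and for each nonempty block deciding — from the diameter-two condition applied to pairs $(z,x_i)$ and $(x_i,z)$ — exactly which of its vertices can exist and how they must be oriented toward $V_2^1$ versus $V_2^2$. In particular I expect a subtle subcase distinction according to whether both of $V_2^1,V_2^2$ are large or one of them is a singleton (when $|V_2^\alpha|=1$, Lemma~\ref{lem-diam2-ViA-VjB}(2) forces the partner to be a singleton too, collapsing the relevant mixed-pair bound to $1$), and it is precisely the interplay between these degenerate configurations and the generic one that yields the two terms inside the $\max$. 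Once the block structure and the mixed pairs are correctly identified, the final inequality is a routine application of Lemma~\ref{lem-diam2-ViA-VjB}(4) together with the unimodality of the central binomial coefficient, so no hard computation remains.
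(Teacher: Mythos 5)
Your overall skeleton matches the paper's: reduce to $|A|=|B|=1$ via $\rev(D)$, determine which blocks of $V_3$ survive, turn the undetermined block pairs into mixed pairs via Lemma~\ref{lem-diam2-ViA-VjB}(4), and optimise over $p_1+p_2=p$. The mixed-pair accounting and the final optimisation are essentially what the paper does and would yield the stated bound once the block structure is established.

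However, the structural step --- the heart of the lemma --- is wrong as you describe it. Since $V_2=V_2^1\cup V_2^2$, neither $x_1$ nor $x_2$ dominates all of $V_2$ (e.g.\ $V_2^2\to x_1$), so the constraints $\diam(V_3,x_1)\le 2$ and $\diam(V_3,x_2)\le 2$ do not give you anything of the form ``$V_3$ points back into $\{x_1,x_2\}$''. The vertex that does the work is $x_3$: both blocks of $V_2$ send to $x_3$, hence $N_D^+(x_3)\subseteq V_3$, and then $\diam(x_3,V_3)\le 2$ forces $x_3\to V_3$ (a length-two path $x_3\to w\to z$ with $z\in V_3$ would need $w\in N_D^+(x_3)\cap N_D^-(z)\subseteq V_3\cap(V_1\cup V_2)=\emptyset$). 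This kills exactly the blocks that send to $x_3$, namely $V_3^1$, $V_3^2$, $V_3^{12}$, $V_3^-$, leaving $V_3=V_3^+\cup V_3^3\cup V_3^{13}\cup V_3^{23}$. Your predicted decomposition $V_3^3\cup V_3^{13}\cup V_3^{23}\cup V_3^-$ is backwards on the $\pm$ blocks: $V_3^-$ must vanish, while $V_3^+$ may well be nonempty --- Lemma~\ref{lem-diam2-ViA-VjB}(3) does not ``kill'' it, it only says $V_3^+\to V_2$ and $|V_3^+|\le 1$, and it is precisely this surviving singleton that supplies the $+1$ in the bound. (By luck $|V_3^{\pm}|\le 1$ either way, so your arithmetic would land on the right number, but the argument for which blocks are empty is missing or incorrect as planned.) A smaller point: Lemma~\ref{lem-diam2-ViA-VjB}(2) does not force a ``partner'' block to be a singleton when $|V_2^\alpha|=1$, and no such subcase split is needed; the single chain of mixed pairs $(V_2^1,V_3^{23})$, $(V_2^2,V_3^{13})$, $(V_2,V_3^3)$ together with $|V_3^+|\le 1$ already gives $q\le 1+\binom{p_1}{\lfloor p_1/2\rfloor}+\binom{p_2}{\lfloor p_2/2\rfloor}+\binom{p}{\lfloor p/2\rfloor}$, from which the claimed maximum follows.
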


\begin{proof}
The situation of $|A|=|B|=2$ in $D$ is the same as $|\overline{A}|=|\overline{B}|=1$ in $\rev(D)$, where $\overline{A}=[3]\setminus A$ and  $\overline{B}=[3]\setminus B$. So we may assume $|A|=|B|=1$ and suppose $A=\{x_1\}$ and $B=\{x_2\}$. Then $V_2=V_2^1\cup V_2^2$, $V_3=V_3^+\cup V_3^3\cup V_3^{12}\cup V_3^{13}\cup V_3^{23}\cup V_3^-$. We know $V_2\rightarrow x_3$, since $\diam(x_3,V_3)\leqslant 2$, then $x_3\rightarrow V_3$. Hence $V_3^{12}=\emptyset=V_3^-$, and so $V_3=V_3^+\cup V_3^3\cup V_3^{13}\cup V_3^{23}$. By Lemma~\ref{lem-diam2-ViA-VjB}, we have $V_3^+\rightarrow V_2$, $V_3^{13}\rightarrow V_2^1$ and $V_3^{23}\rightarrow V_2^2$. So $(V_2^1,V_3^{23})$, $(V_2^2,V_3^{13})$, $(V_2,V_3^3)$ are mixed pairs. Let $p_1=|V_2^1|$ and $p_2=|V_2^2|$. Then $1\leqslant p_1,~p_2\leqslant p-1$ and $p_1+p_2=p$. Hence $q=|V_3|\leqslant 1+\binom{p_1}{\lfloor\frac{p_1}{2}\rfloor}+\binom{p_2}{\lfloor\frac{p_2}{2}\rfloor}+\binom{p}{\lfloor\frac{p}{2}\rfloor}\leqslant \max\Bigl\{2+\binom{p-1}{\lfloor\frac{p-1}{2}\rfloor}+\binom{p}{\lfloor\frac{p}{2}\rfloor}, 1+2\binom{p-2}{\lfloor\frac{p-2}{2}\rfloor}+\binom{p}{\lfloor\frac{p}{2}\rfloor}\Bigr\}$.
\end{proof}

\begin{lemma}\label{lem-H=2-V2AV2B-neq}
Let $3\leqslant p\leqslant q$, and let $D$ be a strong orientation of $\K(3,p,q)$ with diameter two. If $V_2=V_2^A\cup V_2^B$ for two nonempty proper subsets $A$ and $B$ of $[3]$ with $|A|\neq|B|$, then $q\leqslant \max\Bigl\{4+2\binom{p-1}{\lfloor\frac{p-1}{2}\rfloor}, 2+4\binom{p-2}{\lfloor\frac{p-2}{2}\rfloor}, \binom{p+1}{\lfloor\frac{p+1}{2}\rfloor}-1\Bigr\}$.
\end{lemma}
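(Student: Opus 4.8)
The plan is to follow the same template as Lemmas~\ref{lem-H=2-V2+V2A} and~\ref{lem-H=2-V2AV2B-eq}: reduce to a canonical choice of $A$ and $B$ up to symmetry, pin down which $V_3^C$ must be empty using the diameter-two constraints on the three vertices $x_1,x_2,x_3$, identify the surviving mixed pairs via Lemma~\ref{lem-diam2-ViA-VjB}, and then sum the resulting binomial bounds, optimizing over the sizes $p_1=|V_2^A|$, $p_2=|V_2^B|$ with $p_1+p_2=p$. Since $|A|\neq|B|$ and both are nonempty proper subsets of $[3]$, one of them has size $1$ and the other size $2$. Using $\rev(D)$ we may assume $|A|=1$, say $A=\{x_1\}$; then $B$ has size $2$ and is either $\{x_1,x_2\}$, $\{x_1,x_3\}$, or $\{x_2,x_3\}$. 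This gives (up to the symmetry swapping $x_2\leftrightarrow x_3$) two genuinely different subcases: $B=\{x_2,x_3\}$ (disjoint from $A$) and $B=\{x_1,x_2\}$ (overlapping $A$). I expect these two subcases to be responsible for the two ``inner'' terms $4+2\binom{p-1}{\lfloor(p-1)/2\rfloor}$ and $2+4\binom{p-2}{\lfloor(p-2)/2\rfloor}$ in the max, and a third configuration (or the worst split of $p$ into $p_1,p_2$) to be responsible for the term $\binom{p+1}{\lfloor(p+1)/2\rfloor}-1$.

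First I would handle the subcase $A=\{x_1\}$, $B=\{x_2,x_3\}$. Here $V_2^1\to x_2$, $V_2^1\to x_3$, $x_1\to V_2^1$, while $x_1\to V_2^{23}$, $x_2\to V_2^{23}$... wait, $V_2^{23}$ means $x_2,x_3\to V_2^{23}\to x_1$. So $x_1$ is an in-neighbour of $V_2^{23}$ and out-neighbour of $V_2^1$; since $\diam(V_3,x_1)\le 2$ forces $V_3\to x_1$ (as in the earlier lemmas: any $z\in V_3$ with $x_1\to z$ would have $N_D^+(z)\cap N_D^-(x_1)=\emptyset$), we kill all $V_3^C$ with $1\in C$, i.e. $V_3^+=V_3^1=V_3^{12}=V_3^{13}=\emptyset$, leaving $V_3=V_3^2\cup V_3^3\cup V_3^{23}\cup V_3^-$. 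Then Lemma~\ref{lem-diam2-ViA-VjB}(5) tells me which of these are forced one way against $V_2^1$ and $V_2^{23}$ (e.g. $V_3^{23}\to V_2^2$-type containments, $V_2\to V_3^-$, $V_3^+\to V_2$ vacuously), and whatever is left over becomes mixed pairs $(V_2^1,\cdot)$ or $(V_2^{23},\cdot)$. Counting gives $q\le (\text{constant})+ (\text{number of free }V_3\text{-blocks for }V_2^1)\binom{p_1}{\lfloor p_1/2\rfloor}+(\cdots)\binom{p_2}{\lfloor p_2/2\rfloor}$ with $p_1+p_2=p$, which I then bound by splitting into $p_1=1$ or $p_1\ge2$ (and symmetrically), exactly as in Lemma~\ref{lem-H=2-V2AV2B-eq}; the worst of these collapses to something like $4+2\binom{p-1}{\lfloor(p-1)/2\rfloor}$. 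The subcase $B=\{x_1,x_2\}$ is analogous but now $A\subseteq B$, so Lemma~\ref{lem-diam2-ViA-VjB}(5) forces $V_2^{12}\to V_2^1$... rather, it constrains the relation between $V_2^1$ and $V_2^{12}$? No — part (5) is about $V_i$ versus $V_j$, not within $V_2$. Here the relevant thing is that both $x_1\to$ (into $V_2^1$ and $V_2^{12}$) so $x_1$ is a common out-neighbour, and $x_3$ receives from $V_2^1$ but sends to... $x_3\to V_2^{12}$? $V_2^{12}$ means $x_1,x_2\to V_2^{12}\to x_3$, so $x_3$ is an in-neighbour of $V_2^{12}$ and of $V_2^1$ (since $V_2^1\to x_2,x_3$). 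Then $\diam(x_3,V_3)$ or $\diam(V_3,x_3)$ fails unless many $V_3^C$ vanish; I expect this to leave $4$ surviving blocks distributed as $2$ mixed pairs with each of $V_2^1,V_2^{12}$, giving the $2+4\binom{p-2}{\lfloor(p-2)/2\rfloor}$ term after taking $p_1,p_2$ both as balanced as possible away from the extreme split.

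The main obstacle I anticipate is the bookkeeping in part (5) of Lemma~\ref{lem-diam2-ViA-VjB} combined with the diameter-two constraints coming from pairs of $x_i$'s: unlike the $|A|=|B|$ case, here the two blocks $V_2^A,V_2^B$ interact with the $x_i$'s asymmetrically, so I must be careful about which of the eight $V_3^C$ blocks survive in \emph{each} subcase, and the forced orientations between $V_3^C$ and $V_2^A$ versus $V_3^C$ and $V_2^B$ may not coincide — a block can be ``free'' (a mixed pair) relative to $V_2^A$ but ``forced'' relative to $V_2^B$, in which case it does not contribute a Sperner-type factor at all. Getting the exact count of mixed pairs right in every subcase, and then verifying that the maximum over all subcases and over all splits $p_1+p_2=p$ is precisely $\max\{4+2\binom{p-1}{\lfloor(p-1)/2\rfloor},\,2+4\binom{p-2}{\lfloor(p-2)/2\rfloor},\,\binom{p+1}{\lfloor(p+1)/2\rfloor}-1\}$ — in particular checking the third term really does dominate in some configuration, presumably where $V_3$ splits into roughly $\binom{p}{\lfloor p/2\rfloor}$-many blocks against a single vertex plus lower-order pieces — will require the convexity-type estimate $\binom{a}{\lfloor a/2\rfloor}+\binom{b}{\lfloor b/2\rfloor}\le 1+\binom{a+b-1}{\lfloor(a+b-1)/2\rfloor}$ used implicitly elsewhere, and a small finite check for the boundary values of $p$ near $5$.
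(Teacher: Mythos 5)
Your case split (up to $\rev(D)$ and relabelling, $A=\{x_1\}$ with either $B=\{x_2,x_3\}$ or $B=\{x_1,x_2\}$) matches the paper, but there are two genuine problems. First, in the disjoint subcase $V_2=V_2^1\cup V_2^{23}$ your claim that $\diam(V_3,x_1)\leqslant 2$ forces $V_3\rightarrow x_1$ is false: here $N_D^-(x_1)\supseteq V_2^{23}\neq\emptyset$, so a vertex $z\in V_3$ with $x_1\rightarrow z$ can reach $x_1$ through $V_2^{23}$, and indeed $V_3^+,V_3^{12},V_3^{13}$ need not vanish. (That forcing argument only works when $x_1$ dominates \emph{all} of $V_2$, as in Lemma~\ref{lem-H=2-V2+V2A}.) The paper keeps all of $V_3^+\cup V_3^2\cup V_3^3\cup V_3^{12}\cup V_3^{13}\cup V_3^-$ and extracts four mixed pairs plus $|V_3^\pm|\leqslant 1$, giving $2+2\binom{p_1}{\lfloor p_1/2\rfloor}+2\binom{p_2}{\lfloor p_2/2\rfloor}$; this single subcase is the source of \emph{both} terms $4+2\binom{p-1}{\lfloor(p-1)/2\rfloor}$ and $2+4\binom{p-2}{\lfloor(p-2)/2\rfloor}$, depending on the split $p_1+p_2=p$, not one term per subcase as you guessed.

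The more serious gap is the overlapping subcase $V_2=V_2^1\cup V_2^{12}$, which is where the term $\binom{p+1}{\lfloor(p+1)/2\rfloor}-1$ — the threshold of the entire main theorem — actually comes from. Here $x_1\rightarrow V_2\rightarrow x_3$ does force $x_3\rightarrow V_3\rightarrow x_1$, leaving only \emph{two} blocks $V_3=V_3^{23}\cup V_3^3$, not four. Your mixed-pair template then yields only $q\leqslant 2\binom{p}{\lfloor p/2\rfloor}$, which for odd $p$ equals $\binom{p+1}{\lfloor(p+1)/2\rfloor}$ and therefore exceeds the claimed bound by exactly $1$ — and that single unit is precisely what the main theorem needs. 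The paper closes this gap with an argument your plan has no mechanism for: it sets $\mathbb{S}=\{N_D^+(z)\cap V_2\mid z\in V_3^{23}\}$ and $\mathbb{T}=\{N_D^+(z)\cap V_2\mid z\in V_3^3\}$, uses $V_3^3\rightarrow x_2\rightarrow V_3^{23}$ to show $S\setminus T\neq\emptyset$ for all $S\in\mathbb{S}$, $T\in\mathbb{T}$, lifts $\mathbb{T}$ by an auxiliary vertex $y$ so that $\mathbb{S}\cup\mathbb{T}^*$ is an independent collection of $V_2\cup\{y\}$, applies Sperner's Lemma to get $q\leqslant\binom{p+1}{\lfloor(p+1)/2\rfloor}$, and finally rules out equality by showing that the equality case of Sperner's Lemma (all sets of one fixed size $\lambda$) contradicts $\diam(x_2,V_3^3)\leqslant 2$ or $\diam(V_3^{23},x_2)\leqslant 2$ depending on whether $|V_2^{12}|\leqslant\lambda-1$ or $\geqslant\lambda$. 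Without this refinement the lemma, and hence Theorem~\ref{thm-big-part}, does not follow.
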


\begin{proof}
We may assume $|A|=1$, $|B|=2$. There are two cases. 

First, we suppose $A\cap B=\emptyset$, we may assume $A=\{x_1\}$ and $B=\{x_2, x_3\}$. Then $V_2=V_2^1\cup V_2^{23}$, $V_3=V_3^+\cup V_3^2\cup V_3^3\cup V_3^{12}\cup V_3^{13}\cup V_3^-$. By Lemma~\ref{lem-diam2-ViA-VjB}, we have $V_3^+\rightarrow V_2\rightarrow V_3^-$, $V_3^{12}\cup V_3^{13}\rightarrow V_2^1$, $V_2^{23}\rightarrow V_3^2\cup V_3^3$. So $(V_2^1,V_3^2)$, $(V_2^1,V_3^3)$, $(V_2^{23},V_3^{12})$, $(V_2^{23},V_3^{13})$ are mixed pairs. Let $p_1=|V_2^1|$ and $p_2=|V_2^{23}|$. Then $1\leqslant p_1,~p_2\leqslant p-1$ and $p_1+p_2=p$. Hence we have $q=|V_3|\leqslant 1+\binom{p_2}{\lfloor\frac{p_2}{2}\rfloor}+\binom{p_2}{\lfloor\frac{p_2}{2}\rfloor}+\binom{p_1}{\lfloor\frac{p_1}{2}\rfloor}
+\binom{p_1}{\lfloor\frac{p_1}{2}\rfloor}+1=2+2\binom{p_1}{\lfloor\frac{p_1}{2}\rfloor}+2\binom{p_2}{\lfloor\frac{p_2}{2}\rfloor}\leqslant \max\Bigl\{4+2\binom{p-1}{\lfloor\frac{p-1}{2}\rfloor}, 2+4\binom{p-2}{\lfloor\frac{p-2}{2}\rfloor}\Bigr\}$.

Now, we suppose $A\cap B\neq\emptyset$, then $A\subseteq B$, and we may assume $A=\{x_1\}$ and $B=\{x_1, x_2\}$. Then 
\begin{center}
  $V_2=V_2^1\cup V_2^{12}$. 
\end{center}
We know $x_1\rightarrow V_2\rightarrow x_3$, since $\diam(V_3,x_1)\leqslant 2$ and $\diam(x_3,V_3)\leqslant 2$, then $x_3\rightarrow V_3\rightarrow x_1$. So 
\begin{center}
 $V_3=V_3^{23}\cup V_3^3$. 
\end{center}
Let 
\begin{center}
  $\mathbb{S}=\{N_D^+(z)\cap V_2\mid z\in V_3^{23}\}$,\\ 
  $\mathbb{T}=\{N_D^+(z)\cap V_2\mid z\in V_3^{3}\}$.~~
\end{center}
Since $\diam(V_3^3,V_3^3)\leqslant 2$ and $\diam(V_3^{23},V_3^{23})\leqslant 2$, then $\mathbb{S}$ and $\mathbb{T}$ are independent collections of $V_2$. We know $V_3^3\rightarrow x_2\rightarrow V_3^{23}$, since $\diam(V_3^{23},V_3^3)\leqslant 2$, we have $S\setminus T\neq\emptyset$ for any $S\in \mathbb{S}$ and any $T\in \mathbb{T}$. 

Take a new vertex $y\not\in V_2$, and let 
\begin{center}
  $\mathbb{T}^*=\{T\cup \{y\}\mid T\in \mathbb{T}\}$.
\end{center}
Then $\mathbb{T}^*$ is an independent collection of $V_2^*=V_2\cup \{y\}$. Note that $\mathbb{S}$ is also an independent collection of $V_2^*$. Take any $S\in \mathbb{S}$ and any $T\in \mathbb{T}$, we know $y\not\in S$, so $S\setminus (T\cup \{y\})=S\setminus T\neq\emptyset$ and $y\in (T\cup \{y\})\setminus S\neq\emptyset$. Hence $\mathbb{S}\cup \mathbb{T}^*$ is an independent collection of $V_2^*$. By Sperner's Lemma, we have 
\begin{center}
  $|\mathbb{S}\cup \mathbb{T}^*|\leqslant \binom{p+1}{\lfloor\frac{p+1}{2}\rfloor}$, 
\end{center}
and the equality holds if and only if all the subsets in $\mathbb{S}\cup \mathbb{T}^*$ have the same size $\lambda$, where $\lambda=\lfloor\frac{p+1}{2}\rfloor$ or $\lceil\frac{p+1}{2}\rceil$. We suppose the equality holds, then 
\begin{center}
  $\mathbb{S}=\binom{V_2}{\lambda}$ and $\mathbb{T}=\binom{V_2}{\lambda-1}$. 
\end{center}
If $|V_2^{12}|\leqslant \lambda-1$, then we can take $z\in V_3^3$ such that $V_2^{12}\subseteq N_D^+(z)\cap V_2\in \mathbb{T}$, and so $\partial_D(x_2,z)\geqslant 3$, which is a contradiction. If $|V_2^{12}|\geqslant \lambda$, then we can take $z\in V_3^{23}$ such that $V_2^{12}\supseteq N_D^+(z)\cap V_2\in \mathbb{S}$, and so $\partial_D(z,x_2)\geqslant 3$, which is a contradiction. Hence the equality doesn't hold, and we have $q=|V_3|=|V_3^3|+|V_3^{23}|=|\mathbb{S}|+|\mathbb{T}|=|\mathbb{S}|+|\mathbb{T}^*|=|\mathbb{S}\cup \mathbb{T}^*|<\binom{p+1}{\lfloor\frac{p+1}{2}\rfloor}$. 
\end{proof}

When $p\geqslant 5$, we have 
\begin{eqnarray*}
  1+3\binom{p-1}{\lfloor\frac{p-1}{2}\rfloor} & \leqslant & \binom{p+1}{\lfloor\frac{p+1}{2}\rfloor}-1, \\
  \max\Bigl\{2+\binom{p-1}{\lfloor\frac{p-1}{2}\rfloor}+\binom{p}{\lfloor\frac{p}{2}\rfloor}, 1+2\binom{p-2}{\lfloor\frac{p-2}{2}\rfloor}+\binom{p}{\lfloor\frac{p}{2}\rfloor}\Bigr\} & < & \binom{p+1}{\lfloor\frac{p+1}{2}\rfloor}-1, \\
  \max\Bigl\{4+2\binom{p-1}{\lfloor\frac{p-1}{2}\rfloor}, 2+4\binom{p-2}{\lfloor\frac{p-2}{2}\rfloor} \Bigr\} & < & \binom{p+1}{\lfloor\frac{p+1}{2}\rfloor}-1.
\end{eqnarray*}
By the above lemmas in this subsection, we have the following result. 

\begin{theorem}\label{thm-H=2}
Let $5\leqslant p\leqslant q$, and let $D$ be a strong orientation of $\K(3,p,q)$ with diameter two. If $|\mathbb{H}|=2$, then 
\begin{center}
  $q\leqslant \binom{p+1}{\lfloor\frac{p+1}{2}\rfloor}-1$.
\end{center}
\end{theorem}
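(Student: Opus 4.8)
The plan is to assemble Theorem~\ref{thm-H=2} directly from the three case lemmas of this subsection, namely Lemmas~\ref{lem-H=2-V2+V2A}, \ref{lem-H=2-V2AV2B-eq}, and \ref{lem-H=2-V2AV2B-neq}, together with the three displayed numerical inequalities that precede the theorem. First I would recall the reduction set up at the start of the subsection: when $|\mathbb{H}|=2$, after possibly replacing $D$ by $\rev(D)$ we have either $V_2=V_2^+\cup V_2^A$ with $A$ a nonempty proper subset of $[3]$, or $V_2=V_2^A\cup V_2^B$ with $A,B$ nonempty proper subsets of $[3]$; in the latter situation we split according to whether $|A|=|B|$ or $|A|\neq|B|$. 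Passing to $\rev(D)$ does not change $q$ (it only swaps the roles of out- and in-neighbours), so any bound on $q$ proved in the reduced case applies to the original $D$.

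Next I would go case by case. In the case $V_2=V_2^+\cup V_2^A$, Lemma~\ref{lem-H=2-V2+V2A} gives $q\leqslant 1+3\binom{p-1}{\lfloor(p-1)/2\rfloor}$, and the first displayed inequality (valid for $p\geqslant 5$) yields $q\leqslant \binom{p+1}{\lfloor(p+1)/2\rfloor}-1$. In the case $V_2=V_2^A\cup V_2^B$ with $|A|=|B|$, Lemma~\ref{lem-H=2-V2AV2B-eq} gives $q\leqslant\max\{2+\binom{p-1}{\lfloor(p-1)/2\rfloor}+\binom{p}{\lfloor p/2\rfloor},\,1+2\binom{p-2}{\lfloor(p-2)/2\rfloor}+\binom{p}{\lfloor p/2\rfloor}\}$, and the second displayed inequality gives the bound. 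In the case $|A|\neq|B|$, Lemma~\ref{lem-H=2-V2AV2B-neq} gives $q\leqslant\max\{4+2\binom{p-1}{\lfloor(p-1)/2\rfloor},\,2+4\binom{p-2}{\lfloor(p-2)/2\rfloor},\,\binom{p+1}{\lfloor(p+1)/2\rfloor}-1\}$; the third displayed inequality shows the first two terms of this maximum are strictly below $\binom{p+1}{\lfloor(p+1)/2\rfloor}-1$, so in all sub-cases $q\leqslant\binom{p+1}{\lfloor(p+1)/2\rfloor}-1$. Since these exhaust all configurations with $|\mathbb{H}|=2$, the theorem follows.

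The only genuine content beyond bookkeeping is verifying the three displayed inequalities for all $p\geqslant 5$, but those are stated just above the theorem and I would treat them as given (or, if a referee insists, dispatch them by a short Pascal-rule argument: e.g. $\binom{p+1}{\lfloor(p+1)/2\rfloor}\geqslant 3\binom{p-1}{\lfloor(p-1)/2\rfloor}$ follows from iterating $\binom{m+1}{\lfloor(m+1)/2\rfloor}\geqslant\tfrac{?}{}\binom{m}{\lfloor m/2\rfloor}$ and checking the base case $p=5$ against the table of binomial coefficients, where $\binom{6}{3}=20$ while $1+3\binom{4}{2}=19$). The main obstacle, such as it is, is purely organizational: making sure the $\rev(D)$ reduction is invoked correctly in each case so that the asymmetric-looking bounds in Lemmas~\ref{lem-H=2-V2+V2A}--\ref{lem-H=2-V2AV2B-neq} really do cover the mirror configurations, and making sure the case $V_2=V_2^+\cup V_2^-$ is excluded (it is, by Lemma~\ref{lem-diam2-ViA-VjB}(1), since $V_2^+$ and $V_2^-$ cannot both be nonempty—wait, that is part~(1) with $A=[3]$ and... more precisely, if both were nonempty we would need $V_3^A=\emptyset$ for the corresponding $A$; in any event the subsection's opening paragraph already records that $V_2\neq V_2^+\cup V_2^-$). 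With that in hand the proof is a two-line citation of the three lemmas and the three inequalities.

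\begin{proof}
By the discussion at the beginning of Subsection~\ref{subsec-H=2}, when $|\mathbb{H}|=2$, after possibly replacing $D$ by $\rev(D)$ (which leaves $q$ unchanged), we are in one of the following three situations: $V_2=V_2^+\cup V_2^A$ for a nonempty proper subset $A$ of $[3]$; $V_2=V_2^A\cup V_2^B$ for nonempty proper subsets $A,B$ of $[3]$ with $|A|=|B|$; or $V_2=V_2^A\cup V_2^B$ for nonempty proper subsets $A,B$ of $[3]$ with $|A|\neq|B|$.

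In the first situation, Lemma~\ref{lem-H=2-V2+V2A} gives $q\leqslant 1+3\binom{p-1}{\lfloor\frac{p-1}{2}\rfloor}$, and by the first of the three displayed inequalities above we obtain $q\leqslant \binom{p+1}{\lfloor\frac{p+1}{2}\rfloor}-1$. In the second situation, Lemma~\ref{lem-H=2-V2AV2B-eq} gives
\[
q\leqslant \max\Bigl\{2+\binom{p-1}{\lfloor\frac{p-1}{2}\rfloor}+\binom{p}{\lfloor\frac{p}{2}\rfloor},\; 1+2\binom{p-2}{\lfloor\frac{p-2}{2}\rfloor}+\binom{p}{\lfloor\frac{p}{2}\rfloor}\Bigr\},
\]
and by the second displayed inequality we again obtain $q\leqslant \binom{p+1}{\lfloor\frac{p+1}{2}\rfloor}-1$. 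In the third situation, Lemma~\ref{lem-H=2-V2AV2B-neq} gives
\[
q\leqslant \max\Bigl\{4+2\binom{p-1}{\lfloor\frac{p-1}{2}\rfloor},\; 2+4\binom{p-2}{\lfloor\frac{p-2}{2}\rfloor},\; \binom{p+1}{\lfloor\frac{p+1}{2}\rfloor}-1\Bigr\},
\]
and by the third displayed inequality the first two terms of this maximum are at most $\binom{p+1}{\lfloor\frac{p+1}{2}\rfloor}-1$, so $q\leqslant \binom{p+1}{\lfloor\frac{p+1}{2}\rfloor}-1$ in this situation as well.

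Since these three situations exhaust all possibilities when $|\mathbb{H}|=2$, we conclude that $q\leqslant \binom{p+1}{\lfloor\frac{p+1}{2}\rfloor}-1$.
\end{proof}
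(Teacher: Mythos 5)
Your proposal is correct and matches the paper's own argument exactly: the paper likewise disposes of the case $V_2=V_2^+\cup V_2^-$ (forced empty since $|V_2^+|=|V_2^-|=1$ would give $p=2$), reduces $V_2=V_2^-\cup V_2^B$ to $V_2=V_2^+\cup V_2^A$ via $\rev(D)$, and then combines Lemmas~\ref{lem-H=2-V2+V2A}, \ref{lem-H=2-V2AV2B-eq}, \ref{lem-H=2-V2AV2B-neq} with the three displayed inequalities for $p\geqslant 5$. No gaps.
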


\subsection{There are exactly three nonempty sets in $\mathbb{H}$}\label{subsec-H=3}

Let $3\leqslant p\leqslant q$, and let $D$ be a strong orientation of $\K(3,p,q)$ with diameter two. If there are exactly three nonempty sets in $\mathbb{H}$, then $V_2=V_2^+\cup V_2^A\cup V_2^-$, $V_2=V_2^+\cup V_2^A\cup V_2^B$, $V_2=V_2^A\cup V_2^B\cup V_2^-$ or $V_2=V_2^A\cup V_2^B\cup V_2^C$ where $A$, $B$ and $C$ are nonempty proper subsets of $[3]$. The case $V_2=V_2^A\cup V_2^B\cup V_2^-$ in $D$ is the same situation as $V_2=V_2^+\cup V_2^{\overline{A}}\cup V_2^{\overline{B}}$ in $\rev(D)$ where $\overline{A}=[3]\setminus A$ and $\overline{B}=[3]\setminus B$. Hence we only need to consider three cases.

\begin{lemma}\label{lem-H=3-V2+V2AV2-}
Let $3\leqslant p\leqslant q$, and let $D$ be a strong orientation of $\K(3,p,q)$ with diameter two. If $V_2=V_2^+\cup V_2^A\cup V_2^-$ for some nonempty and proper subset $A$ of $[3]$, then $q\leqslant 2+3\binom{p-2}{\lfloor\frac{p-2}{2}\rfloor}$.
\end{lemma}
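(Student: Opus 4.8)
The plan is to follow the same template used in the preceding lemmas of Subsection~\ref{subsec-H=2}: fix the partition $V_2 = V_2^+ \cup V_2^A \cup V_2^-$, use the structural constraints of Lemma~\ref{lem-diam2-ViA-VjB} to pin down which parts of $V_3$ can be nonempty, identify the resulting mixed pairs $(V_{j2}, V_i^A)$, and then bound $q = |V_3|$ by summing the Sperner bounds $\binom{p_2}{\lfloor p_2/2\rfloor}$ over those mixed pairs together with the singletons forced by parts~(3) of Lemma~\ref{lem-diam2-ViA-VjB}. Since $|A| \in \{1,2\}$ and the case $|A|=2$ in $D$ mirrors $|A|=1$ in $\rev(D)$ (swapping $V_2^+$ and $V_2^-$), I would reduce to $|A|=1$, say $A=\{x_1\}$, so that $V_2 = V_2^+ \cup V_2^1 \cup V_2^-$.

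First I would record the easy consequences. By Lemma~\ref{lem-diam2-ViA-VjB}(3), $|V_2^+|=|V_2^-|=1$, and $V_2^+ \rightarrow V_3$ while $V_3 \rightarrow V_2^-$. Because $x_1 \rightarrow V_2^1 \cup V_2^+$ and the distance from any $z\in V_3$ to $x_1$ is at most $2$ with $N_D^+(z) \cap N_D^-(x_1)$ having to be nonempty, I expect the condition $\diam(V_3, x_1)\le 2$ to force $V_3 \rightarrow x_1$, killing $V_3^{12}$ and $V_3^{13}$ (analogous to the argument in Lemma~\ref{lem-H=2-V2+V2A}); similarly $x_1 \in N_D^-(V_2^1)$ constrains things so that only $V_3^2, V_3^3, V_3^{23}$ (plus the forced singletons coming from $V_3^+$ and $V_3^-$, if present) survive. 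Next I would check, using Lemma~\ref{lem-diam2-ViA-VjB}(5) and the diameter-two condition between $V_2^+$ (resp.\ $V_2^-$) and the various $V_3^B$, whether $V_3^+$ and $V_3^-$ can actually be nonempty; if they can, each contributes exactly one vertex by part~(3), giving the additive constant $2$ in the bound. The remaining part of $V_3$, namely $V_3^2 \cup V_3^3 \cup V_3^{23}$, interacts with $V_2^1$: since $x_1 \rightarrow V_2^1$ and $V_2^1 \rightarrow V_2^-$ is not the issue, the point is that $V_2^1$ has undetermined orientation only toward $V_3^2$, $V_3^3$, $V_3^{23}$ (because $2\setminus 1, 3\setminus 1, \{2,3\}\setminus\{1\}$ are all nonempty and $1\setminus 2, 1\setminus 3$ are nonempty too), so $(V_2^1, V_3^2)$, $(V_2^1, V_3^3)$, $(V_2^1, V_3^{23})$ are mixed pairs in the sense of Lemma~\ref{lem-diam2-ViA-VjB}(4). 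With $p_2 = |V_2^1| \le p - 2$ (since $V_2^+$ and $V_2^-$ each eat one vertex), each of these three pieces has size at most $\binom{p_2}{\lfloor p_2/2\rfloor} \le \binom{p-2}{\lfloor (p-2)/2 \rfloor}$, yielding $q \le 2 + 3\binom{p-2}{\lfloor (p-2)/2\rfloor}$.

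The main obstacle I anticipate is making the reduction to "only $V_3^2, V_3^3, V_3^{23}$ matter'' fully rigorous: I need to verify carefully that $\diam(V_3, x_1)\le 2$ together with $x_1 \rightarrow (V_2^1 \cup V_2^+)$ really forces $V_3^{12} = V_3^{13} = \emptyset$, and dually that $x_1 \rightarrow$ nothing in $V_3$ is needed (i.e.\ whether $V_3^1$ can occur). There is a subtlety: a vertex $z \in V_3^1$ would have $x_1 \rightarrow z$, and then one must produce a short path from $z$ back to some $x_j$ or show it contradicts diameter two. I would handle this by the same $N_D^+(\cdot) \cap N_D^-(\cdot) = \emptyset$ argument as in Lemma~\ref{lem-neighbour-diamter-2}, examining $\partial_D(z, x_2)$ or $\partial_D(z, x_1)$: if $x_1 \rightarrow z$ then $N_D^-(x_1) \subseteq V_3 \setminus \{z\}$ has no room to meet $N_D^+(z) \subseteq V_1 \cup V_2$, forcing $\partial_D(z,x_1)\ge 3$. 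So in fact $V_3 \rightarrow x_1$, i.e.\ $V_3^1 = V_3^{12} = V_3^{13} = \emptyset$, which is exactly what is needed. The rest is the routine mixed-pair bookkeeping already illustrated in the previous lemmas, so I would present it compactly, cross-referencing Lemma~\ref{lem-diam2-ViA-VjB} for each orientation claim rather than re-deriving it.
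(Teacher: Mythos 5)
Your overall template (reduce to $|A|=1$, determine $V_3$'s surviving parts, count singletons plus three mixed pairs against $V_2^1$) matches the paper's, and the final number coincides, but two of your structural claims are wrong and they only cancel numerically by accident.

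First, your argument that $\diam(V_3,x_1)\leqslant 2$ forces $V_3\rightarrow x_1$ (hence $V_3^{12}=V_3^{13}=\emptyset$) fails here. You write that if $x_1\rightarrow z$ then $N_D^-(x_1)\subseteq V_3\setminus\{z\}$, but that is false in this lemma: $V_2^-\neq\emptyset$ and $V_2^-\rightarrow V_1$, so $V_2^-\subseteq N_D^-(x_1)$, and since $V_3\rightarrow V_2^-$ the path $z\rightarrow V_2^-\rightarrow x_1$ already gives $\partial_D(z,x_1)\leqslant 2$. The analogous forcing in Lemma~\ref{lem-H=2-V2+V2A} works only because there $V_2^-=\emptyset$. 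In fact $V_3^{12}$ and $V_3^{13}$ can be nonempty; what is true is that each has size at most $1$: by Lemma~\ref{lem-diam2-ViA-VjB}(3) and (5), $V_2^+\rightarrow V_3^{12}\rightarrow V_2^-$ and $V_3^{12}\rightarrow V_2^1$, so every arc between $V_3^{12}$ and $V_2=V_2^+\cup V_2^1\cup V_2^-$ is determined and part (2) gives $|V_3^{12}|\leqslant 1$ (similarly for $V_3^{13}$). This is the actual source of the additive constant $2$.

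Second, you assign that constant to $V_3^+$ and $V_3^-$, but these are empty outright by Lemma~\ref{lem-diam2-ViA-VjB}(1), since $V_2^+\neq\emptyset$ and $V_2^-\neq\emptyset$ (the same part (1) also kills $V_3^1$, with no need for your distance argument). So the correct decomposition is $V_3=V_3^{12}\cup V_3^{13}\cup V_3^2\cup V_3^3\cup V_3^{23}$, with $|V_3^{12}|,|V_3^{13}|\leqslant 1$ and the three mixed pairs $(V_2^1,V_3^2)$, $(V_2^1,V_3^3)$, $(V_2^1,V_3^{23})$ contributing $3\binom{p-2}{\lfloor\frac{p-2}{2}\rfloor}$. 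Your mixed-pair bookkeeping for these three sets is correct; the gap is entirely in the identification of which parts of $V_3$ survive and why the $+2$ appears.
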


\begin{proof}
The case $V_2=V_2^+\cup V_2^A\cup V_2^-$ in $D$ is the same situation as $V_2=V_2^+\cup V_2^{\overline{A}}\cup V_2^-$ in $\rev(D)$ where $\overline{A}=[3]\setminus A$, hence we may assume $|A|=1$. Suppose $A=\{x_1\}$. Then $V_2=V_2^+\cup V_2^1\cup V_2^-$, $V_3=V_3^{12}\cup V_3^{13}\cup V_3^2\cup V_3^3\cup V_3^{23}$. By Lemma~\ref{lem-diam2-ViA-VjB}, we have $V_1\rightarrow V_2^+\rightarrow V_3$, $V_3\rightarrow V_2^-\rightarrow V_1$, $V_3^{12}\cup V_3^{13}\rightarrow V_2^1$. So $|V_3^{12}|\leqslant 1$, $|V_3^{13}|\leqslant 1$, $(V_2^1,V_3^2)$, $(V_2^1,V_3^3)$, $(V_2^1,V_3^{23})$ are mixed pairs. Hence $q\leqslant 1+1+\binom{p-2}{\lfloor\frac{p-2}{2}\rfloor}+\binom{p-2}{\lfloor\frac{p-2}{2}\rfloor}
+\binom{p-2}{\lfloor\frac{p-2}{2}\rfloor}=2+3\binom{p-2}{\lfloor\frac{p-2}{2}\rfloor}$. 
\end{proof}

\begin{lemma}\label{lem-H=3-V2+V2AV2B-eq}
Let $3\leqslant p\leqslant q$, and let $D$ be a strong orientation of $\K(3,p,q)$ with diameter two. If $V_2=V_2^+\cup V_2^A\cup V_2^B$ for some nonempty and proper subsets $A$ and $B$ of $[3]$ with $|A|=|B|$, then $q\leqslant 2+2\binom{p-1}{\lfloor\frac{p-1}{2}\rfloor}$.
\end{lemma}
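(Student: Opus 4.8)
The plan is to split according to the common size $|A|=|B|\in\{1,2\}$ (the only possibilities for nonempty proper subsets of $[3]$), fix convenient representatives by relabelling $x_1,x_2,x_3$, and in each subcase bound $|V_3^C|$ for every $C\subseteq[3]$ and sum. A preliminary remark is that reversal does \emph{not} collapse the two subcases: the reverse of $V_2^+\cup V_2^A\cup V_2^B$ with $|A|=|B|=2$ is $V_2^-\cup V_2^{\overline A}\cup V_2^{\overline B}$ with $|\overline A|=|\overline B|=1$, which leaves the family $V_2^+\cup(\cdots)$ this lemma governs. Hence both subcases are treated directly, by entirely parallel arguments.

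Two facts I would establish first, valid in both subcases. Since $V_2^+\neq\emptyset$, Lemma~\ref{lem-diam2-ViA-VjB}(3) gives $|V_2^+|=1$, say $V_2^+=\{y_0\}$, together with $y_0\rightarrow V_3$. I then claim $V_3^+=\emptyset$: if $z\in V_3^+$ then $x_1,x_2,x_3\rightarrow z$, so $N_D^+(z)\cap V_1=\emptyset$; also $y_0\rightarrow z$ and, because $y_0\rightarrow V_3$, we have $N_D^-(y_0)=V_1$. Thus $N_D^+(z)\cap N_D^-(y_0)=\emptyset$ while $z\not\rightarrow y_0$, so $\partial_D(z,y_0)\geqslant 3$ by Lemma~\ref{lem-neighbour-diamter-2}, a contradiction.

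For the subcase $|A|=|B|=1$ I take $A=\{1\}$, $B=\{2\}$, so $V_2=V_2^+\cup V_2^1\cup V_2^2$ and $V_3^1=V_3^2=\emptyset$ by Lemma~\ref{lem-diam2-ViA-VjB}(1). For each surviving set $V_3^C$ with $C\in\{\{3\},\{1,2\},\{1,3\},\{2,3\},\emptyset\}$ I split $V_2$ into the part forced into a uniform orientation with $V_3^C$ — governed by Lemma~\ref{lem-diam2-ViA-VjB}(5), where a strict containment in either direction forces uniformity and $V_2^+$ is always uniform — and the remaining mixed part, then invoke Lemma~\ref{lem-diam2-ViA-VjB}(4), or part (2) when the mixed part is empty. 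This yields $|V_3^{12}|\leqslant1$ and $|V_3^-|\leqslant1$ (fully uniform), $|V_3^3|\leqslant\binom{p-1}{\lfloor(p-1)/2\rfloor}$ (mixed part $V_2^1\cup V_2^2$), $|V_3^{13}|\leqslant\binom{p_2}{\lfloor p_2/2\rfloor}$ (mixed part $V_2^2$) and $|V_3^{23}|\leqslant\binom{p_1}{\lfloor p_1/2\rfloor}$ (mixed part $V_2^1$), where $p_1=|V_2^1|$, $p_2=|V_2^2|$ and $p_1+p_2=p-1$. The subcase $|A|=|B|=2$ (representative $A=\{1,2\}$, $B=\{1,3\}$) runs in exact parallel: here $V_3^{12}=V_3^{13}=\emptyset$ by part (1), the fully uniform sets $V_3^1,V_3^-$ contribute at most $1$ each, and the mixed pairs give $|V_3^{23}|\leqslant\binom{p-1}{\lfloor(p-1)/2\rfloor}$ (mixed part $V_2^{12}\cup V_2^{13}$) together with $|V_3^2|\leqslant\binom{p_2}{\lfloor p_2/2\rfloor}$ and $|V_3^3|\leqslant\binom{p_1}{\lfloor p_1/2\rfloor}$, where now $p_1=|V_2^{12}|$, $p_2=|V_2^{13}|$ and $p_1+p_2=p-1$.

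Summing, in both subcases I obtain $q=|V_3|\leqslant 2+\binom{p-1}{\lfloor(p-1)/2\rfloor}+\binom{p_1}{\lfloor p_1/2\rfloor}+\binom{p_2}{\lfloor p_2/2\rfloor}$ with $p_1,p_2\geqslant1$ and $p_1+p_2=p-1$. The final step, which is the only genuinely arithmetic point, is the elementary inequality $\binom{p_1}{\lfloor p_1/2\rfloor}+\binom{p_2}{\lfloor p_2/2\rfloor}\leqslant\binom{p_1+p_2}{\lfloor(p_1+p_2)/2\rfloor}=\binom{p-1}{\lfloor(p-1)/2\rfloor}$; I would prove it by extracting from Vandermonde's identity for $\binom{p_1+p_2}{\lfloor(p_1+p_2)/2\rfloor}$ two summands dominating $\binom{p_1}{\lfloor p_1/2\rfloor}$ and $\binom{p_2}{\lfloor p_2/2\rfloor}$ respectively (checking separately the parity case in which the two chosen summation indices coincide). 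Combining then gives $q\leqslant 2+2\binom{p-1}{\lfloor(p-1)/2\rfloor}$, as claimed. I expect the main obstacle to be bookkeeping: ensuring the two non-reversal-equivalent subcases both land on the identical clean bound, and verifying that the two ``small'' mixed parts are exactly $V_2^{(1)}$ and $V_2^{(2)}$ with sizes summing to $p-1$, so that the binomial inequality can absorb them into a single $\binom{p-1}{\lfloor(p-1)/2\rfloor}$.
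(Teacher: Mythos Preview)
Your proposal is correct and follows essentially the same approach as the paper: split by $|A|=|B|\in\{1,2\}$, use Lemma~\ref{lem-diam2-ViA-VjB} to identify forced directions and mixed pairs, bound each $|V_3^C|$, and finish with $\binom{p_1}{\lfloor p_1/2\rfloor}+\binom{p_2}{\lfloor p_2/2\rfloor}\leqslant\binom{p_1+p_2}{\lfloor(p_1+p_2)/2\rfloor}$. Two minor remarks: your separate argument for $V_3^+=\emptyset$ is unnecessary since part~(1) of Lemma~\ref{lem-diam2-ViA-VjB} gives it directly from $V_2^+\neq\emptyset$; and in the $|A|=|B|=2$ subcase the paper notes that the common element $x_i\in A\cap B$ satisfies $x_i\to V_2$, whence $\diam(V_3,x_i)\leqslant 2$ forces $V_3\to x_i$, eliminating one more $V_3$-block and yielding the slightly sharper $q\leqslant 1+2\binom{p-1}{\lfloor(p-1)/2\rfloor}$ there---your direct mixed-pair analysis gives $2+\cdots$ in both subcases, which still suffices for the stated lemma.
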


\begin{proof}
Let $p_1=|V_2^A|$ and $p_2=|V_2^B|$. Then $1\leqslant p_1,p_2\leqslant p-2$ and $p_1+p_2=p-1$. First, we assume $|A|=|B|=1$, and suppose $A=\{x_1\}$ and $B=\{x_2\}$. Then $V_2=V_2^+\cup V_2^1\cup V_2^2$ and $V_3=V_3^{12}\cup V_3^{13}\cup V_3^{23}\cup V_3^3\cup V_3^-$. By Lemma~\ref{lem-diam2-ViA-VjB}, we have $V_1\rightarrow V_2^+\rightarrow V_3$, $V_2\rightarrow V_3^-\rightarrow V_1$, $V_3^{12}\cup V_3^{13}\rightarrow V_2^1$ and $V_3^{12}\cup V_3^{23}\rightarrow V_2^2$. So $|V_3^{12}|\leqslant 1$, $|V_3^-|\leqslant 1$, $(V_2^1,V_3^{23})$, $(V_2^2,V_3^{13})$ and $(V_2^1\cup V_2^2,V_3^3)$ are mixed pairs. Hence $q\leqslant 1+\binom{p_2}{\lfloor\frac{p_2}{2}\rfloor}+\binom{p_1}{\lfloor\frac{p_1}{2}\rfloor}+\binom{p-1}{\lfloor\frac{p-1}{2}\rfloor}+1
\leqslant 2+\binom{p_1+p_2}{\lfloor\frac{p_1+p_2}{2}\rfloor}+\binom{p-1}{\lfloor\frac{p-1}{2}\rfloor}=
2+2\binom{p-1}{\lfloor\frac{p-1}{2}\rfloor}$. 

Now suppose $|A|=|B|=2$, we may assume $A=\{x_1,x_2\}$ and $B=\{x_2,x_3\}$. Then $V_2=V_2^+\cup V_2^{12}\cup V_2^{23}$. We know $x_2\rightarrow V_2$, since $\diam(V_3,x_2)\leqslant 2$, we have $V_3\rightarrow x_2$, and so $V_3=V_3^1\cup V_3^3\cup V_3^{13}\cup V_3^-$. By Lemma~\ref{lem-diam2-ViA-VjB}, we have $V_1\rightarrow V_2^+\rightarrow V_3$, $V_2\rightarrow V_3^-\rightarrow V_1$, $V_2^{12}\rightarrow V_3^1$ and $V_2^{23}\rightarrow V_3^3$. Then $|V_3^-|\leqslant 1$, $(V_2^{12},V_3^3)$, $(V_2^{23},V_3^1)$ and $(V_2^{12}\cup V_2^{23},V_3^{13})$ are mixed pairs. Hence $q\leqslant \binom{p_2}{\lfloor\frac{p_2}{2}\rfloor}+\binom{p_1}{\lfloor\frac{p_1}{2}\rfloor}+\binom{p-1}{\lfloor\frac{p-1}{2}\rfloor}+1
\leqslant 1+\binom{p_1+p_2}{\lfloor\frac{p_1+p_2}{2}\rfloor}+\binom{p-1}{\lfloor\frac{p-1}{2}\rfloor}=
1+2\binom{p-1}{\lfloor\frac{p-1}{2}\rfloor}$. 
\end{proof}

\begin{lemma}\label{lem-H=3-V2+V2AV2B-neq}
Let $3\leqslant p\leqslant q$, and let $D$ be a strong orientation of $\K(3,p,q)$ with diameter two. If $V_2=V_2^+\cup V_2^A\cup V_2^B$ for some nonempty and proper subsets $A$ and $B$ of $[3]$ with $|A|\neq|B|$, then $q\leqslant 1+\binom{p-2}{\lfloor\frac{p-2}{2}\rfloor}+2\binom{p-1}{\lfloor\frac{p-1}{2}\rfloor}$.
\end{lemma}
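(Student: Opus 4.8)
The plan is to imitate the template of the earlier lemmas in this subsection. Since $A$ and $B$ are nonempty proper subsets of $[3]$ with $|A|\neq|B|$, after possibly interchanging their names we may assume $|A|=1$ and $|B|=2$; then either $A\cap B=\emptyset$ or $A\subseteq B$, and I would treat these two configurations separately, in each case pinning down the unknown orientations inside $D$ via Lemma~\ref{lem-diam2-ViA-VjB} and then reading off mixed pairs.

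First I would handle $A\cap B=\emptyset$, taking $A=\{1\}$, $B=\{2,3\}$, so that $V_2=V_2^+\cup V_2^1\cup V_2^{23}$. Lemma~\ref{lem-diam2-ViA-VjB}(1) forces $V_3^+=V_3^1=V_3^{23}=\emptyset$; part (3) gives $|V_2^+|=1$ with $V_2^+\rightarrow V_3$, and (if $V_3^-\neq\emptyset$) $|V_3^-|=1$ with $V_2\rightarrow V_3^-$; part (5) gives $V_3^{12},V_3^{13}\rightarrow V_2^1$ and $V_2^{23}\rightarrow V_3^2,V_3^3$. The only unforced edges are those between $V_2^1$ and $V_3^2\cup V_3^3$ and between $V_2^{23}$ and $V_3^{12}\cup V_3^{13}$, so part (4) yields the mixed pairs $(V_2^1,V_3^2)$, $(V_2^1,V_3^3)$, $(V_2^{23},V_3^{12})$, $(V_2^{23},V_3^{13})$. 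Writing $p_1=|V_2^1|$, $p_2=|V_2^{23}|$ (so $p_1+p_2=p-1$, $p_1,p_2\geq1$) gives $q=|V_3|\leq 1+2\binom{p_1}{\lfloor p_1/2\rfloor}+2\binom{p_2}{\lfloor p_2/2\rfloor}$. Using that $\binom{a}{\lfloor a/2\rfloor}+\binom{b}{\lfloor b/2\rfloor}$ with $a+b$ fixed is maximized when $\{a,b\}$ is as unbalanced as possible (the convexity fact already used in Lemma~\ref{lem-H=2-V2AV2B-eq}), this is at most $3+2\binom{p-2}{\lfloor(p-2)/2\rfloor}$, which in turn is at most the claimed bound $1+\binom{p-2}{\lfloor(p-2)/2\rfloor}+2\binom{p-1}{\lfloor(p-1)/2\rfloor}$ since $2\binom{p-1}{\lfloor(p-1)/2\rfloor}\geq\binom{p-2}{\lfloor(p-2)/2\rfloor}+2$ for $p\geq3$.

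Next I would handle $A\subseteq B$, taking $A=\{1\}$, $B=\{1,2\}$, so that $V_2=V_2^+\cup V_2^1\cup V_2^{12}$; here $x_1\rightarrow V_2$ and $V_2\rightarrow x_3$. Lemma~\ref{lem-diam2-ViA-VjB}(1) gives $V_3^+=V_3^1=V_3^{12}=\emptyset$, and the crucial extra step is to show $V_3\rightarrow x_1$, hence $V_3^{13}=\emptyset$, by the reduction used in Lemma~\ref{lem-H=2-V2+V2A}: from $x_1\rightarrow V_2$ and $\diam(V_3,x_1)\leq2$, a vertex $z\in V_3$ with $x_1\rightarrow z$ could only reach $x_1$ in two steps through $x_2$ or $x_3$, and one rules this out using $V_2\rightarrow x_3$, a short case check on the tournament $D[V_1]$, and $\diam(D)=2$. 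With $V_3=V_3^2\cup V_3^3\cup V_3^{23}\cup V_3^-$ established, Lemma~\ref{lem-diam2-ViA-VjB} gives $|V_2^+|=1$ with $V_2^+\rightarrow V_3$, $V_2^{12}\rightarrow V_3^2$, $V_2\rightarrow V_3^-$ with $|V_3^-|\leq1$, and the remaining free edges give the mixed pairs $(V_2^1,V_3^2)$, $(V_2^1\cup V_2^{12},V_3^3)$, $(V_2^1\cup V_2^{12},V_3^{23})$. Since $p_1=|V_2^1|\leq p-2$ and $|V_2^1\cup V_2^{12}|=p-1$, this yields $q=|V_3|\leq1+\binom{p_1}{\lfloor p_1/2\rfloor}+2\binom{p-1}{\lfloor(p-1)/2\rfloor}\leq1+\binom{p-2}{\lfloor(p-2)/2\rfloor}+2\binom{p-1}{\lfloor(p-1)/2\rfloor}$; combining the two cases finishes the lemma.

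I expect the main obstacle to be exactly that structural step $V_3^{13}=\emptyset$ (equivalently $V_3\rightarrow x_1$) in the case $A\subseteq B$: unlike the orientations read directly off Lemma~\ref{lem-diam2-ViA-VjB}, it is not purely local, so it genuinely needs the global diameter-two hypothesis applied to pairs inside $V_1\cup V_3$, together with the bookkeeping over the possible orientations of the triangle $D[V_1]$. Without it the count would overshoot the claimed bound by exactly $1$, so establishing it carefully (rather than by the one-line assertion used in the earlier analogous lemmas) is where the real work lies; everything after that is routine manipulation of the central binomial coefficients $\binom{n}{\lfloor n/2\rfloor}$ and their monotonicity and convexity.
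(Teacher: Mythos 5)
Your proposal is correct and follows essentially the same route as the paper: the same split into $A\cap B=\emptyset$ and $A\subseteq B$, the same forced orientations from Lemma~\ref{lem-diam2-ViA-VjB}, the same mixed pairs, and the same final estimates (your slightly different handling of $\binom{p_1}{\lfloor p_1/2\rfloor}+\binom{p_2}{\lfloor p_2/2\rfloor}$ in the first case still lands under the claimed bound).

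One correction to your closing remarks, though: the step $V_3\rightarrow x_1$ in the case $A\subseteq B$ is not the hard part, and your proposed mechanism for it rests on a misconception. In $\K(3,p,q)$ the set $V_1=\{x_1,x_2,x_3\}$ is one part of the tripartition, hence an independent set; there is no ``tournament $D[V_1]$'' and no path $z\rightarrow x_2\rightarrow x_1$ to rule out, because $x_2$ and $x_1$ are not adjacent. The step is exactly as local as the others: for $z\in V_3$ with $x_1\rightarrow z$ one has $N_D^+(z)\subseteq V_1\cup V_2$ and $N_D^-(x_1)\subseteq V_2\cup V_3$, so any length-two $z$--$x_1$ path must pass through $V_2$, which $x_1\rightarrow V_2$ forbids; Lemma~\ref{lem-neighbour-diamter-2} then gives $\partial_D(z,x_1)\geqslant 3$, contradicting $\diam(D)=2$. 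This is precisely the one-line argument used in Lemma~\ref{lem-H=2-V2+V2A}, and your worry that the count would otherwise overshoot by $1$ is resolved for free. Everything else in your write-up matches the paper's proof.
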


\begin{proof}
We may assume $|A|=1$ and $|B|=2$, so there are two cases. Let $p_1=|V_2^A|$ and $p_2=|V_2^B|$. Then $1\leqslant p_1,p_2\leqslant p-2$ and $p_1+p_2=p-1$. 

First, we let $A\cap B=\emptyset$. We may suppose $A=\{x_1\}$ and $B=\{x_2,x_3\}$. Then $V_2=V_2^+\cup V_2^1\cup V_2^{23}$ and $V_3=V_3^{12}\cup V_3^{13}\cup V_3^2\cup V_3^{3}\cup V_3^-$. By Lemma~\ref{lem-diam2-ViA-VjB}, we have $V_1\rightarrow V_2^+\rightarrow V_3$, $V_2\rightarrow V_3^-\rightarrow V_1$, $V_3^{12}\cup V_3^{13}\rightarrow V_2^1$ and $V_2^{23}\rightarrow V_3^2\cup V_3^3$. So $|V_3^-|\leqslant 1$, $(V_2^1,V_3^2)$, $(V_2^1,V_3^3)$, $(V_2^{23},V_3^{12})$ and $(V_2^{23},V_3^{13})$ are mixed pairs. Hence $q\leqslant \binom{p_2}{\lfloor\frac{p_2}{2}\rfloor}+\binom{p_2}{\lfloor\frac{p_2}{2}\rfloor}+\binom{p_1}{\lfloor\frac{p_1}{2}\rfloor}
+\binom{p_1}{\lfloor\frac{p_1}{2}\rfloor}+1
\leqslant 1+2\binom{p_1+p_2}{\lfloor\frac{p_1+p_2}{2}\rfloor}=1+2\binom{p-1}{\lfloor\frac{p-1}{2}\rfloor}$. 

Now we let $A\cap B\neq\emptyset$. We may suppose $A=\{x_1\}$ and $B=\{x_1,x_2\}$. Then $V_2=V_2^+\cup V_2^1\cup V_2^{12}$. We know $x_1\rightarrow V_2$, since $\diam(V_3,x_1)\leqslant 2$, we have $V_3\rightarrow x_1$, and so $V_3=V_3^2\cup V_3^{23}\cup V_3^3\cup V_3^-$. By Lemma~\ref{lem-diam2-ViA-VjB}, we have $V_1\rightarrow V_2^+\rightarrow V_3$, $V_2\rightarrow V_3^-\rightarrow V_1$, $V_2^{12}\rightarrow V_3^2$. So $|V_3^-|\leqslant 1$, $(V_2^1,V_3^2)$, $(V_2^1\cup V_2^{12},V_3^{23})$ and $(V_2^1\cup V_2^{12},V_3^3)$ are mixed pairs. Hence $q\leqslant \binom{p_1}{\lfloor\frac{p_1}{2}\rfloor}+\binom{p_1+p_2}{\lfloor\frac{p_1+p_2}{2}\rfloor}+\binom{p_1+p_2}{\lfloor\frac{p_1+p_2}{2}\rfloor}+1
\leqslant 1+\binom{p-2}{\lfloor\frac{p-2}{2}\rfloor}+2\binom{p-1}{\lfloor\frac{p-1}{2}\rfloor}$. 
\end{proof}

\begin{lemma}\label{lem-H=3-V2AV2BV2C}
Let $4\leqslant p\leqslant q$, and let $D$ be a strong orientation of $\K(3,p,q)$ with diameter two. If $V_2=V_2^A\cup V_2^B\cup V_2^C$ for some nonempty and proper subsets $A$, $B$ and $C$ of $[3]$, then $q\leqslant 1+\binom{p-3}{\lfloor\frac{p-3}{2}\rfloor}+\binom{p-1}{\lfloor\frac{p-1}{2}\rfloor}+\binom{p}{\lfloor\frac{p}{2}\rfloor}$.
\end{lemma}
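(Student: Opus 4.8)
The plan is a case analysis on the unordered triple $\{A,B,C\}$ of labels occurring in $V_2=V_2^A\cup V_2^B\cup V_2^C$. Using the action of $S_3$ on $[3]$ together with the $\rev$-reduction (which replaces each label $S$ by $[3]\setminus S$, exactly as in the reductions opening Sections~\ref{subsec-H=2} and~\ref{subsec-H=3}), one may assume $\{A,B,C\}$ is one of: (a) the three singletons $\{1\},\{2\},\{3\}$; (b) two singletons and their union, say $\{1\},\{2\},\{1,2\}$; (c) two singletons and a $2$-set meeting exactly one of them, say $\{1\},\{2\},\{2,3\}$; every other triple (three $2$-sets, or one singleton and two $2$-sets) is the $\rev$-image of one of (a)--(c). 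In each case Lemma~\ref{lem-diam2-ViA-VjB}(1) forces $V_3^A=V_3^B=V_3^C=\emptyset$, and if some index $i$ lies in all three of $A,B,C$ (resp.\ in none of them) then, as in the proof of Lemma~\ref{lem-H=3-V2+V2AV2B-eq}, $x_i\rightarrow V_2$ gives $\diam(V_3,x_i)\leqslant 2\Rightarrow V_3\rightarrow x_i$ (resp.\ $x_i\rightarrow V_3$), killing further parts; afterwards $V_3$ is a union of at most five parts $V_3^E$, of which at most two ($V_3^+,V_3^-$) have size $\leqslant 1$ by Lemma~\ref{lem-diam2-ViA-VjB}(3).

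For the surviving ``middle'' parts, Lemma~\ref{lem-diam2-ViA-VjB}(5) shows that a pair $(V_2^\ast,V_3^E)$ is forced precisely when its two labels are comparable and is mixed when they are independent; hence each $V_3^E$ forms a mixed pair (Lemma~\ref{lem-diam2-ViA-VjB}(4)) with the union of those $V_2$-parts whose label is independent with $E$, so $|V_3^E|\leqslant\binom{p_E}{\lfloor p_E/2\rfloor}$ where $p_E$ is the size of that union. Summing over the middle parts and optimizing over the composition $|V_2^A|+|V_2^B|+|V_2^C|=p$ (using that $n\mapsto\binom{n}{\lfloor n/2\rfloor}$ roughly doubles, so the sum is largest when one part is as big as possible and each stays $\geqslant 1$, together with the elementary inequality $2\binom{p-2}{\lfloor(p-2)/2\rfloor}\leqslant\binom{p-3}{\lfloor(p-3)/2\rfloor}+\binom{p-1}{\lfloor(p-1)/2\rfloor}$ valid for $p\geqslant 5$) already yields the claimed bound in cases (a), (c) and in case (b) whenever $|V_2^1|\geqslant 2$ and $|V_2^2|\geqslant 2$; in fact in (a) and (c) with room to spare.

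The delicate situation is case (b), $V_2=V_2^1\cup V_2^2\cup V_2^{12}$, when $|V_2^1|\in\{1,2\}$ or $|V_2^2|\in\{1,2\}$: here $V_3=V_3^+\cup V_3^3\cup V_3^{13}\cup V_3^{23}$ and the naive sum can reach $1+\binom{p}{\lfloor p/2\rfloor}+2\binom{p-1}{\lfloor(p-1)/2\rfloor}$, which overshoots. The extra input is that $\diam(D)=2$ makes $\{N_D^+(z)\mid z\in V_3\}$ an independent collection of subsets of $V_1\cup V_2$ (for distinct $z,z'\in V_3$, $N_D^+(z)\cap N_D^-(z')\neq\emptyset$ and $N_D^-(z')=(V_1\cup V_2)\setminus N_D^+(z')$, so $N_D^+(z)\not\subseteq N_D^+(z')$). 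Reading off the fixed $V_1$-traces --- $\emptyset$ on $V_3^+$, $\{x_1,x_2\}$ on $V_3^3$, $\{x_2\}$ on $V_3^{13}$, $\{x_1\}$ on $V_3^{23}$ --- this becomes: for $z\in V_3^3$ and $w\in V_3^{13}$ (resp.\ $w\in V_3^{23}$) one has $N_D^+(w)\cap V_2\not\subseteq N_D^+(z)\cap V_2$, while $N_D^+(w)\cap V_2\supseteq V_2^1$ (resp.\ $\supseteq V_2^2$). One then attaches a single auxiliary point to the collection $\{N_D^+(z)\cap V_2\mid z\in V_3^3\}$, exactly as $\mathbb{T}$ is enlarged to $\mathbb{T}^\ast$ in the proof of Lemma~\ref{lem-H=2-V2AV2B-neq}, so that it becomes independent with the collections coming from $V_3^{13}$ and $V_3^{23}$; Sperner's Lemma then bounds the total (after, if needed, a short split according to whether the $V_3^{13}$- and $V_3^{23}$-collections are cross-independent), its equality case is excluded because $V_2^1,V_2^2\neq\emptyset$ cannot coexist with a common set-size under the forced inclusions above, and a slightly finer count using the small values of $|V_2^1|,|V_2^2|$ sharpens the estimate to $q\leqslant 1+\binom{p-3}{\lfloor(p-3)/2\rfloor}+\binom{p-1}{\lfloor(p-1)/2\rfloor}+\binom{p}{\lfloor p/2\rfloor}$.

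I expect the main obstacle to be exactly this last case: choosing the auxiliary tags so that the merged family is genuinely independent despite the asymmetric interaction of the three middle parts (each of $V_3^{13},V_3^{23}$ lies ``above'' $V_3^3$, yet they constrain each other not at all), treating the Sperner equality case under the side conditions $V_2^1\neq\emptyset\neq V_2^2$, and recovering enough slack --- from the inclusions $N_D^+(w)\cap V_2\supseteq V_2^1$ or $\supseteq V_2^2$ and from the smallness of $|V_2^1|,|V_2^2|$ --- to pass from a $\binom{p+1}{\lfloor(p+1)/2\rfloor}$-type Sperner bound down to the explicit binomial sum of the lemma (which for $p\in\{5,6\}$ equals $\binom{p+1}{\lfloor(p+1)/2\rfloor}-1$ and is strictly smaller for $p\geqslant 7$). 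The remaining steps --- the case reduction, the elimination of empty $V_3$-parts, the easy cases, and the binomial optimizations --- follow the template used repeatedly in Sections~\ref{subsec-H=2} and~\ref{subsec-H=3}.
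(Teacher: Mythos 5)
Your case decomposition is the same as the paper's: reduce via $S_3$ and $\rev$ to three singletons, to $\{1\},\{2\},\{1,2\}$, and to two singletons with a $2$-set containing exactly one of them; kill the $V_3$-parts with labels comparable to all of $A,B,C$; and bound the surviving parts by mixed pairs plus the superadditivity of $n\mapsto\binom{n}{\lfloor n/2\rfloor}$. Your treatment of cases (a) and (c), and of case (b) when $|V_2^1|,|V_2^2|\geqslant 2$, agrees with the paper and is correct (the naive sums $2+\binom{p}{\lfloor p/2\rfloor}$, $2+\binom{p}{\lfloor p/2\rfloor}+\binom{p-1}{\lfloor (p-1)/2\rfloor}$ and $1+2\binom{p-2}{\lfloor(p-2)/2\rfloor}+\binom{p}{\lfloor p/2\rfloor}$ are all at most the stated bound).

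The gap is exactly where you predicted it: case (b) with $|V_2^1|=1$ or $|V_2^2|=1$, where the naive sum can reach $1+\binom{p-1}{\lfloor(p-1)/2\rfloor}+\binom{p_1+p_3}{\lfloor(p_1+p_3)/2\rfloor}+\binom{p}{\lfloor p/2\rfloor}$ and overshoots. Your proposed fix --- merging the out-neighbourhood families of $V_3^3$, $V_3^{13}$, $V_3^{23}$ into one independent collection via an auxiliary point, as in Lemma~\ref{lem-H=2-V2AV2B-neq} --- does not close as sketched, for the reason you yourself observe: $V_3^{13}$ and $V_3^{23}$ impose no cross-constraints on each other (the arcs through $x_1$ and $x_2$ already give distance $\leqslant 2$ both ways), so no choice of tags makes the three families jointly independent, and any Sperner bound on a pairwise merge (e.g.\ $|V_3^3|+|V_3^{13}|\leqslant\binom{p+1}{\lfloor(p+1)/2\rfloor}$) plus the separate mixed-pair bound on the third family is strictly worse than the target. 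The ``slightly finer count'' you defer to is the actual content of the proof, and it is not a Sperner argument at all: the paper observes that for $z\in V_3^{23}$ the only length-$2$ routes from $z$ to $x_2$ pass through $V_2^1$ (since $N_D^-(x_2)\subseteq V_2^1\cup V_3$ and $V_3$ is independent), so $\diam(V_3^{23},x_2)\leqslant 2$ together with $|V_2^1|=1$ forces the arcs $V_3^{23}\rightarrow V_2^1$; symmetrically $|V_2^2|=1$ forces $V_3^{13}\rightarrow V_2^2$. This shrinks the mixed pair $(V_2^1\cup V_2^{12},V_3^{23})$ to $(V_2^{12},V_3^{23})$, replacing $\binom{p_1+p_3}{\lfloor(p_1+p_3)/2\rfloor}$ by $\binom{p_3}{\lfloor p_3/2\rfloor}\leqslant\binom{p-3}{\lfloor(p-3)/2\rfloor}$ and yielding $q\leqslant 1+\binom{p-3}{\lfloor(p-3)/2\rfloor}+\binom{p-1}{\lfloor(p-1)/2\rfloor}+\binom{p}{\lfloor p/2\rfloor}$ directly. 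Without this forced-arc observation (or an equivalent substitute) your argument does not establish the lemma in its decisive subcase.
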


\begin{proof}
The case $V_2=V_2^A\cup V_2^B\cup V_2^C$ in $D$ is the same situation as $V_2=V_2^{\overline{A}}\cup V_2^{\overline{B}}\cup V_2^{\overline{C}}$ in $\rev(D)$ where $\overline{A}=[3]\setminus A$, $\overline{B}=[3]\setminus B$ and $\overline{C}=[3]\setminus C$. We may assume $|A|\leqslant |B|\leqslant |C|$, then we only need to consider $(|A|,|B|,|C|)=(1,1,1)$ or $(1,1,2)$. Let $p_1=|V_2^A|$, $p_2=|V_2^B|$ and $p_3=|V_2^C|$. Then $1\leqslant p_1,p_2,p_3\leqslant p-2$ and $p_1+p_2+p_3=p$.

First, let $(|A|,|B|,|C|)=(1,1,1)$, we may assume $A=\{x_1\}$, $B=\{x_2\}$ and $C=\{x_3\}$. Then $V_2=V_2^1\cup V_2^2\cup V_2^3$ and $V_3=V_3^+\cup V_3^{12}\cup V_3^{13}\cup V_3^{23}\cup V_3^-$. By Lemma~\ref{lem-diam2-ViA-VjB}, we have $V_1\rightarrow V_3^+\rightarrow V_2$, $V_2\rightarrow V_3^-\rightarrow V_1$, $V_3^{12}\rightarrow V_2^1\cup V_2^2$, $V_3^{13}\rightarrow V_2^1\cup V_2^3$ and $V_3^{23}\rightarrow V_2^2\cup V_2^3$. So $|V_3^+|\leqslant 1$, $|V_3^-|\leqslant 1$, $(V_2^1,V_3^{23})$, $(V_2^2,V_3^{13})$ and $(V_2^3,V_3^{12})$ are mixed pairs. Hence $q\leqslant 1+\binom{p_1}{\lfloor\frac{p_1}{2}\rfloor}+\binom{p_2}{\lfloor\frac{p_2}{2}\rfloor}+\binom{p_3}{\lfloor\frac{p_3}{2}\rfloor}+1\leqslant 2+\binom{p}{\lfloor\frac{p}{2}\rfloor}$. 

Now, we let $(|A|,|B|,|C|)=(1,1,2)$, then $A\subseteq C$ or $B\subseteq C$. If $A\subseteq C$ and $B\cup C=V_1$, we may assume $A=\{x_1\}$, $B=\{x_2\}$ and $C=\{x_1,x_3\}$, then $V_2=V_2^1\cup V_2^2\cup V_2^{13}$ and $V_3=V_3^+\cup V_3^3\cup V_3^{12}\cup V_3^{23}\cup V_3^-$. By Lemma~\ref{lem-diam2-ViA-VjB}, we know $V_1\rightarrow V_3^+\rightarrow V_2$, $V_2\rightarrow V_3^-\rightarrow V_1$, $V_3^{12}\rightarrow V_2^1\cup V_2^2$, $V_3^{23}\rightarrow V_2^2$ and $V_2^{13}\rightarrow V_3^3$. So $|V_3^+|\leqslant 1$, $|V_3^-|\leqslant 1$, $(V_2^1\cup V_2^2,V_3^3)$, $(V_2^{13}, V_3^{12})$ and $(V_2^1\cup V_2^{13}, V_3^{23})$ are mixed pairs. Hence $q\leqslant 1+\binom{p_1+ p_2}{\lfloor\frac{p_1+p_2}{2}\rfloor}+\binom{p_3}{\lfloor\frac{p_3}{2}\rfloor}+\binom{p_1+p_3}{\lfloor\frac{p_1+p_3}{2}\rfloor}+1\leqslant 2+\binom{p}{\lfloor\frac{p}{2}\rfloor}+\binom{p-1}{\lfloor\frac{p-1}{2}\rfloor}$. 

If $A\cup B=C$, we may assume $A=\{x_1\}$, $B=\{x_2\}$ and $C=\{x_1,x_2\}$. Then $V_2=V_2^1\cup V_2^2\cup V_2^{12}$. We know $V_2\rightarrow x_3$, since $\diam(x_3,V_3)\leqslant 2$, then $x_3\rightarrow V_3$ and so $V_3=V_3^+\cup V_3^{13}\cup V_3^{23}\cup V_3^3$. By Lemma~\ref{lem-diam2-ViA-VjB}, we have $V_1\rightarrow V_3^+\rightarrow V_2$, $V_3^{13}\rightarrow V_2^1$ and $V_3^{23}\rightarrow V_2^2$. So $|V_3^+|\leqslant 1$, $(V_2^2\cup V_2^{12}, V_3^{13})$, $(V_2^1\cup V_2^{12}, V_3^{23})$ and $(V_2,V_3^3)$ are mixed pairs. Hence $q\leqslant 1+\binom{p_2+p_3}{\lfloor\frac{p_2+p_3}{2}\rfloor}+\binom{p_1+p_3}{\lfloor\frac{p_1+p_3}{2}\rfloor}+\binom{p}{\lfloor\frac{p}{2}\rfloor}$. 

When $p_1=p_2=1$, since $\diam(V_3^{13},x_1)\leqslant 2$ and $\diam(V_3^{23},x_2)\leqslant 2$, we have $V_3^{13}\rightarrow V_2^2$ and $V_3^{23}\rightarrow V_2^1$. So $(V_2^{12}, V_3^{13})$, $(V_2^{12}, V_3^{23})$ and $(V_2,V_3^3)$ are mixed pairs. Hence $q\leqslant 1+\binom{p_3}{\lfloor\frac{p_3}{2}\rfloor}+\binom{p_3}{\lfloor\frac{p_3}{2}\rfloor}+\binom{p}{\lfloor\frac{p}{2}\rfloor}
=1+2\binom{p-2}{\lfloor\frac{p-2}{2}\rfloor}+\binom{p}{\lfloor\frac{p}{2}\rfloor}$.

When $p_1=1$ and $p_2\geqslant 2$ (or, $p_1\geqslant 2$ and $p_2=1$), by the same argument as in the last paragraph, we have $V_3^{23}\rightarrow V_2^1$ (or $V_3^{13}\rightarrow V_2^2$). So $(V_2^2\cup V_2^{12}, V_3^{13})$, $(V_2^{12}, V_3^{23})$ and $(V_2,V_3^3)$ (or, $(V_2^{12}, V_3^{13})$, $(V_2^1\cup V_2^{12}, V_3^{23})$ and $(V_2,V_3^3)$) are mixed pairs. Hence $q\leqslant 1+\binom{p_2+p_3}{\lfloor\frac{p_2+p_3}{2}\rfloor}+\binom{p_3}{\lfloor\frac{p_3}{2}\rfloor}+\binom{p}{\lfloor\frac{p}{2}\rfloor}\leqslant 1+\binom{p-1}{\lfloor\frac{p-1}{2}\rfloor}+\binom{p-3}{\lfloor\frac{p-3}{2}\rfloor}+\binom{p}{\lfloor\frac{p}{2}\rfloor}$. 

When $p_1\geqslant 2$ and $p_2\geqslant 2$, we have $q\leqslant 1+\binom{p_2+p_3}{\lfloor\frac{p_2+p_3}{2}\rfloor}+\binom{p_1+p_3}{\lfloor\frac{p_1+p_3}{2}\rfloor}+\binom{p}{\lfloor\frac{p}{2}\rfloor}\leqslant 1+2\binom{p-2}{\lfloor\frac{p-2}{2}\rfloor}+\binom{p}{\lfloor\frac{p}{2}\rfloor}$.
\end{proof}

When $p\geqslant 4$ and $1\leqslant m\leqslant n$, we have 
\begin{eqnarray*}
  3\binom{p-1}{\lfloor\frac{p-1}{2}\rfloor} & < & 2\binom{p}{\lfloor\frac{p}{2}\rfloor},\\ 
  \binom{m+1}{\lfloor\frac{m+1}{2}\rfloor}-\binom{m}{\lfloor\frac{m}{2}\rfloor} & \leqslant & \binom{n+1}{\lfloor\frac{n+1}{2}\rfloor}-\binom{n}{\lfloor\frac{n}{2}\rfloor}.
\end{eqnarray*}
So we have
\begin{eqnarray*}
  2+3\binom{p-2}{\lfloor\frac{p-2}{2}\rfloor} &<& 2+2\binom{p-1}{\lfloor\frac{p-1}{2}\rfloor}, \\
  2+2\binom{p-1}{\lfloor\frac{p-1}{2}\rfloor} &<& 1+\binom{p-2}{\lfloor\frac{p-2}{2}\rfloor}+2\binom{p-1}{\lfloor\frac{p-1}{2}\rfloor}, \\
  1+\binom{p-2}{\lfloor\frac{p-2}{2}\rfloor}+2\binom{p-1}{\lfloor\frac{p-1}{2}\rfloor} &\leqslant & 1+\binom{p-3}{\lfloor\frac{p-3}{2}\rfloor}+\binom{p-1}{\lfloor\frac{p-1}{2}\rfloor}+\binom{p}{\lfloor\frac{p}{2}\rfloor}.
\end{eqnarray*}
By the above lemmas in this subsection, we have the following result. 

\begin{theorem}\label{thm-H=3}
Let $4\leqslant p\leqslant q$, and let $D$ be a strong orientation of $\K(3,p,q)$ with diameter two. If $|\mathbb{H}|=3$, then 
\begin{center}
  $q\leqslant 1+\binom{p-3}{\lfloor\frac{p-3}{2}\rfloor}+\binom{p-1}{\lfloor\frac{p-1}{2}\rfloor}+\binom{p}{\lfloor\frac{p}{2}\rfloor}$.
\end{center}
\end{theorem}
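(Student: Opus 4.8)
The plan is to obtain Theorem~\ref{thm-H=3} by assembling the four lemmas of this subsection and then comparing the four bounds they produce via the chain of binomial inequalities displayed just above. When $|\mathbb{H}|=3$, exactly three of the eight parts $V_2^+,V_2^1,V_2^2,V_2^3,V_2^{12},V_2^{13},V_2^{23},V_2^-$ are nonempty, and the first task is to sort the possible triples into a short list of representatives. Since $\rev(D)$ is again a strong orientation of $\K(3,p,q)$ of diameter two, and it interchanges the labels $+\leftrightarrow-$ and $\{\alpha\}\leftrightarrow[3]\setminus\{\alpha\}$ on the parts of $V_2$, passing to $\rev(D)$ when necessary and relabelling $x_1,x_2,x_3$ reduces us to exactly the three situations already spelled out at the start of Subsection~\ref{subsec-H=3}: $V_2=V_2^+\cup V_2^A\cup V_2^-$ with $A$ a proper nonempty subset of $[3]$; $V_2=V_2^+\cup V_2^A\cup V_2^B$ with $A,B$ proper nonempty (this, via reversal, also absorbs $V_2=V_2^A\cup V_2^B\cup V_2^-$); and $V_2=V_2^A\cup V_2^B\cup V_2^C$ with $A,B,C$ proper nonempty. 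I would stress that the subsidiary lemmas are stated so as to cover each configuration together with its $\rev(D)$ twin — for instance Lemma~\ref{lem-H=3-V2+V2AV2B-eq} treats both $|A|=|B|=1$ and $|A|=|B|=2$, and Lemma~\ref{lem-H=3-V2+V2AV2B-neq} treats both ways (disjoint or nested) that a size-$1$ set and a size-$2$ set can occur — so that this list really is exhaustive.

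With the case division in place, in each case I would just quote the relevant lemma. Lemma~\ref{lem-H=3-V2+V2AV2-} yields $q\leqslant 2+3\binom{p-2}{\lfloor\frac{p-2}{2}\rfloor}$, Lemma~\ref{lem-H=3-V2+V2AV2B-eq} yields $q\leqslant 2+2\binom{p-1}{\lfloor\frac{p-1}{2}\rfloor}$, Lemma~\ref{lem-H=3-V2+V2AV2B-neq} yields $q\leqslant 1+\binom{p-2}{\lfloor\frac{p-2}{2}\rfloor}+2\binom{p-1}{\lfloor\frac{p-1}{2}\rfloor}$, and Lemma~\ref{lem-H=3-V2AV2BV2C} yields $q\leqslant 1+\binom{p-3}{\lfloor\frac{p-3}{2}\rfloor}+\binom{p-1}{\lfloor\frac{p-1}{2}\rfloor}+\binom{p}{\lfloor\frac{p}{2}\rfloor}$. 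It then remains only to verify that, for every $p\geqslant 4$, the fourth of these quantities is the largest — which is exactly the displayed chain of inequalities. That chain is in turn a consequence of the two elementary facts recorded with it: the estimate $3\binom{p-1}{\lfloor\frac{p-1}{2}\rfloor}<2\binom{p}{\lfloor\frac{p}{2}\rfloor}$ for $p\geqslant 4$, and the monotonicity $\binom{m+1}{\lfloor\frac{m+1}{2}\rfloor}-\binom{m}{\lfloor\frac{m}{2}\rfloor}\leqslant\binom{n+1}{\lfloor\frac{n+1}{2}\rfloor}-\binom{n}{\lfloor\frac{n}{2}\rfloor}$ for $1\leqslant m\leqslant n$, both of which follow from a direct check for small arguments together with a routine application of Pascal's identity (the ratio $\binom{p}{\lfloor p/2\rfloor}/\binom{p-1}{\lfloor(p-1)/2\rfloor}$ equals $2$ for even $p$ and exceeds $3/2$ throughout).

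Since all of the structural content sits inside the four subsidiary lemmas, the only real obstacle is bookkeeping: confirming that the enumeration of admissible triples of nonempty parts is complete modulo the $\rev(D)$ symmetry, and dealing with the boundary value $p=4$, where the first link of the chain is an equality rather than a strict inequality. There $\leqslant$ is all that is needed — indeed the four bounds read $8,\,8,\,9,\,11$ and the claimed value is $11$, so the conclusion still holds — while for $p\geqslant 5$ every inequality in the chain is strict. No new ideas are required beyond these lemmas.
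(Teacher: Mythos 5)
Your proposal is correct and follows essentially the same route as the paper: the theorem is obtained by exhausting the possible triples of nonempty parts up to the $\rev(D)$ symmetry, invoking Lemmas~\ref{lem-H=3-V2+V2AV2-}--\ref{lem-H=3-V2AV2BV2C}, and taking the maximum of the four bounds via the displayed chain of binomial inequalities. Your observation that the first link of that chain degenerates to an equality at $p=4$ (both sides equal $8$) is a fair correction of the paper's strict ``$<$'', but it is harmless since only the non-strict comparison is needed.
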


\subsection{There are exactly four nonempty sets in $\mathbb{H}$}\label{subsec-H=4}

Let $4\leqslant p\leqslant q$, and let $D$ be a strong orientation of $\K(3,p,q)$ with diameter two. If there are exactly four nonempty sets in $\mathbb{H}$, then $V_2=V_2^+\cup V_2^A\cup V_2^B\cup V_2^-$, $V_2=V_2^+\cup V_2^A\cup V_2^B\cup V_2^C$, $V_2=V_2^A\cup V_2^B\cup V_2^C\cup V_2^-$ or $V_2=V_2^A\cup V_2^B\cup V_2^C\cup V_2^E$ where $A$, $B$, $C$ and $E$ are nonempty proper subsets of $[3]$. The case $V_2=V_2^A\cup V_2^B\cup V_2^C\cup V_2^-$ in $D$ is the same situation as $V_2=V_2^+\cup V_2^{\overline{A}}\cup V_2^{\overline{B}}\cup V_2^{\overline{C}}$ in $\rev(D)$ where $\overline{A}=[3]\setminus A$, $\overline{B}=[3]\setminus B$ and $\overline{C}=[3]\setminus C$. Hence we only need to consider three cases.

\begin{lemma}\label{lem-H=4-V2+V2AV2BV2-}
Let $4\leqslant p\leqslant q$, and let $D$ be a strong orientation of $\K(3,p,q)$ with diameter two. If $V_2=V_2^+\cup V_2^A\cup V_2^B\cup V_2^-$ for some nonempty and proper subsets $A$ and $B$ of $[3]$, then $q\leqslant 3\binom{p-2}{\lfloor\frac{p-2}{2}\rfloor}$.
\end{lemma}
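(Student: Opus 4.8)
The plan is to follow the same template as the preceding lemmas in this subsection: fix the partition $V_2 = V_2^+\cup V_2^A\cup V_2^B\cup V_2^-$ and use the $\rev(D)$ symmetry together with the already-established orientation facts from Lemma~\ref{lem-diam2-ViA-VjB} to pin down which of the eight possible blocks of $V_3$ can be nonempty, and then bound $|V_3|$ by a sum of Sperner-type terms coming from mixed pairs. Since $A$ and $B$ are nonempty proper subsets of $[3]$, each has size $1$ or $2$. The $\rev(D)$ map sends the case $(|A|,|B|)$ to $(3-|A|,3-|B|)$ while also swapping $V_2^+\leftrightarrow V_2^-$ (which is harmless here since both are present), so up to this symmetry we may assume $|A|\le |B|$, leaving the subcases $(1,1)$, $(1,2)$, and $(2,2)$; and for $(1,1)$ and $(2,2)$ we may further use the symmetry to assume $|A|=|B|=1$, so really only $(1,1)$ with $A,B$ distinct singletons and $(1,2)$ with $A\cap B=\emptyset$ or $A\subseteq B$ need to be handled explicitly.

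First I would handle the case $A=\{x_1\}$, $B=\{x_2\}$. Then $V_2 = V_2^+\cup V_2^1\cup V_2^2\cup V_2^-$, and the potential blocks of $V_3$ are $V_3^+, V_3^3, V_3^{12}, V_3^{13}, V_3^{23}, V_3^-$ (the blocks $V_3^1, V_3^2$ are forbidden since, e.g., $V_2^1\ne\emptyset$ forces $V_3^1=\emptyset$ by Lemma~\ref{lem-diam2-ViA-VjB}(1)). By Lemma~\ref{lem-diam2-ViA-VjB}(3), $V_2^+\ne\emptyset$ gives $V_2^+\to V_3$ and $|V_2^+|=1$, and dually $V_3\to V_2^-$ with $|V_2^-|=1$; combined with $\diam(V_3,x_3)\le 2$ and $\diam(x_3,V_3)\le 2$ this should force $V_3^{12}$ and one other small block to be bounded by $1$ (by Lemma~\ref{lem-diam2-ViA-VjB}(5) the block $V_3^{12}$ points into both $V_2^1$ and $V_2^2$, so $V_3^{12}\to V_2^1\cup V_2^2$ and hence $(V_3^{12})$ behaves like a single-dominated block, giving $|V_3^{12}|\le 1$). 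After this, the surviving "large" blocks $V_3^{13}, V_3^{23}, V_3^3$ each form a mixed pair with some sub-union of $\{V_2^1, V_2^2\}$ of total size $p-2$ (since $|V_2^+|=|V_2^-|=1$), and Lemma~\ref{lem-diam2-ViA-VjB}(4) bounds each by $\binom{|V_{j2}|}{\lfloor|V_{j2}|/2\rfloor}\le\binom{p-2}{\lfloor(p-2)/2\rfloor}$. Summing: $q=|V_3|\le |V_3^+|+|V_3^{12}|+|V_3^{13}|+|V_3^{23}|+|V_3^3| \le \text{(small terms)} + 3\binom{p-2}{\lfloor(p-2)/2\rfloor}$, and the small terms should collapse to $0$ once one argues $|V_3^+|=|V_3^{12}|=0$ here (or they are absorbed); in any case the final bound $q\le 3\binom{p-2}{\lfloor(p-2)/2\rfloor}$ is what we want, so I expect the $O(1)$ contributions to vanish exactly, which is the delicate bookkeeping point.

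For the case $|A|=1$, $|B|=2$: if $A\cap B=\emptyset$, say $A=\{x_1\}$, $B=\{x_2,x_3\}$, then $V_2=V_2^+\cup V_2^1\cup V_2^{23}\cup V_2^-$; if $A\subseteq B$, say $A=\{x_1\}$, $B=\{x_1,x_2\}$, then $V_2=V_2^+\cup V_2^1\cup V_2^{12}\cup V_2^-$. In both sub-subcases the presence of both $V_2^+$ and $V_2^-$ again gives $|V_2^+|=|V_2^-|=1$ by Lemma~\ref{lem-diam2-ViA-VjB}(3), so the two "small" $V_2$-blocks have total size $p-2$; I would then run through Lemma~\ref{lem-diam2-ViA-VjB}(5) to determine the forced arcs from the $V_3$-blocks into $V_2^1$ and $V_2^{23}$ (resp.\ $V_2^{12}$) and identify which $V_3$-blocks are singletons and which form mixed pairs with $V_2^1$, $V_2^{23}$ (resp.\ $V_2^{12}$) or their unions; as in the parallel Lemma~\ref{lem-H=3-V2+V2AV2B-neq} argument, each of the (at most three) genuinely large $V_3$-blocks gets a $\binom{p_1}{\lfloor p_1/2\rfloor}$ or $\binom{p_2}{\lfloor p_2/2\rfloor}$ or $\binom{p_1+p_2}{\lfloor(p_1+p_2)/2\rfloor}$ bound with $p_1+p_2=p-2$, and using the concavity/monotonicity inequalities already displayed before Theorem~\ref{thm-H=2} one shows the sum of three such terms is at most $3\binom{p-2}{\lfloor(p-2)/2\rfloor}$.

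The main obstacle I anticipate is the same one that the earlier lemmas quietly navigate: correctly enumerating exactly which $V_3$-blocks survive and proving the "leftover" constant terms (the $|V_3^+|, |V_3^-|, |V_3^{12}|$ pieces) are actually zero rather than one, so that the bound comes out as the clean $3\binom{p-2}{\lfloor(p-2)/2\rfloor}$ with no additive slack — this requires using the diameter-two condition on a few specific pairs like $(x_3, z)$ or $(z, x_3)$ for $z$ in a would-be singleton block to derive a contradiction with $\partial_D\le 2$ via Lemma~\ref{lem-neighbour-diamter-2}. The Sperner/mixed-pair estimates themselves are routine given Lemma~\ref{lem-diam2-ViA-VjB}(4) and the binomial inequalities already in hand; the care is entirely in the case split and in showing no block labelled with a superscript containing a "$+$" or "$-$" can be populated.
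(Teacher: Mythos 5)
Your overall skeleton is the paper's: the same case split on $(|A|,|B|)$ up to the $\rev(D)$ symmetry, the same use of Lemma~\ref{lem-diam2-ViA-VjB} to kill $V_3^+$, $V_3^-$ and the blocks $V_3^\alpha$ with $V_2^\alpha\neq\emptyset$, and the same identification of the remaining $V_3$-blocks as mixed pairs. But the final summation does not close as you have set it up, and the remedy you propose for this is not available. In the case $A=\{x_1\}$, $B=\{x_2\}$ you bound each of the three surviving large blocks $V_3^{13},V_3^{23},V_3^3$ by $\binom{p-2}{\lfloor\frac{p-2}{2}\rfloor}$ and are left with the contribution of $V_3^{12}$, which you hope to show is empty. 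It need not be: $V_3^{12}\rightarrow V_2^1\cup V_2^2$ together with $V_2^+\rightarrow V_3^{12}\rightarrow V_2^-$ only puts $V_3^{12}$ in the situation of Lemma~\ref{lem-diam2-ViA-VjB}(2), which yields $|V_3^{12}|\leqslant 1$, not $|V_3^{12}|=0$, and no diameter-two argument on pairs like $(x_3,z)$ rules out a single vertex there. With your bounds this leaves $q\leqslant 1+3\binom{p-2}{\lfloor\frac{p-2}{2}\rfloor}$, which overshoots the target. The paper instead keeps the sharper per-pair bounds: the mixed pairs $(V_2^2,V_3^{13})$ and $(V_2^1,V_3^{23})$ give $|V_3^{13}|\leqslant\binom{p_2}{\lfloor\frac{p_2}{2}\rfloor}$ and $|V_3^{23}|\leqslant\binom{p_1}{\lfloor\frac{p_1}{2}\rfloor}$ with $p_1+p_2=p-2$, and the superadditivity $\binom{p_1}{\lfloor\frac{p_1}{2}\rfloor}+\binom{p_2}{\lfloor\frac{p_2}{2}\rfloor}\leqslant\binom{p_1+p_2}{\lfloor\frac{p_1+p_2}{2}\rfloor}$ (already used throughout the paper) collapses these two terms into one $\binom{p-2}{\lfloor\frac{p-2}{2}\rfloor}$, absorbing the $+1$ and yielding $q\leqslant 1+2\binom{p-2}{\lfloor\frac{p-2}{2}\rfloor}\leqslant 3\binom{p-2}{\lfloor\frac{p-2}{2}\rfloor}$.

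The same issue recurs in the subcase $A=\{x_1\}$, $B=\{x_1,x_2\}$, where your count of ``at most three genuinely large $V_3$-blocks'' is off: there are four ($V_3^{13},V_3^2,V_3^{23},V_3^3$), with mixed pairs $(V_2^{12},V_3^{13})$, $(V_2^1,V_3^2)$, $(V_2^1\cup V_2^{12},V_3^{23})$, $(V_2^1\cup V_2^{12},V_3^3)$, and the bound $3\binom{p-2}{\lfloor\frac{p-2}{2}\rfloor}$ is reached precisely because two of the four terms are $\binom{p_1}{\lfloor\frac{p_1}{2}\rfloor}$ and $\binom{p_2}{\lfloor\frac{p_2}{2}\rfloor}$ and merge into a single $\binom{p-2}{\lfloor\frac{p-2}{2}\rfloor}$. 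So the missing ingredient is not a vanishing argument for the singleton blocks but the systematic use of the finer mixed-pair partners together with superadditivity; as written, your accounting does not establish the stated bound.
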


\begin{proof}
Let $p_1=|V_2^A|$ and $p_2=|V_2^B|$. Then $1\leqslant p_1,p_2\leqslant p-3$ and $p_1+p_2=p-2$. We first consider the case $|A|=|B|$, the situation of $V_2=V_2^+\cup V_2^A\cup V_2^B\cup V_2^-$ is the same as $V_2=V_2^+\cup V_2^{\overline{A}}\cup V_2^{\overline{B}}\cup V_2^-$ in $\rev(D)$ where $\overline{A}=[3]\setminus A$ and $\overline{B}=[3]\setminus B$, so may suppose $|A|=|B|=1$ and let $A=\{x_1\}$ and $B=\{x_2\}$. Then $V_2=V_2^+\cup V_2^1\cup V_2^2\cup V_2^-$ and $V_3=V_3^{12}\cup V_3^{13}\cup V_3^{23}\cup V_3^3$. By Lemma~\ref{lem-diam2-ViA-VjB}, we have $V_1\rightarrow V_2^+\rightarrow V_3$, $V_3\rightarrow V_2^-\rightarrow V_1$, $V_3^{12}\cup V_3^{13}\rightarrow V_2^1$ and $V_3^{12}\cup V_3^{23}\rightarrow V_2^2$. So $|V_3^{12}|\leqslant 1$, $(V_2^2,V_3^{13})$, $(V_2^1,V_3^{23})$ and $(V_2^1\cup V_2^2,V_3^3)$ are mixed pairs. Hence $q\leqslant 1+\binom{p_2}{\lfloor\frac{p_2}{2}\rfloor}+\binom{p_1}{\lfloor\frac{p_1}{2}\rfloor}+\binom{p_1+p_2}{\lfloor\frac{p_1+p_2}{2}\rfloor}\leqslant 1+2\binom{p_1+p_2}{\lfloor\frac{p_1+p_2}{2}\rfloor}=1+2\binom{p-2}{\lfloor\frac{p-2}{2}\rfloor}$. 

Now we consider the case $|A|\neq |B|$. We may suppose $|A|=1$ and $|B|=2$. If $A\subseteq B$, we may let $A=\{x_1\}$ and $B=\{x_1,x_2\}$. Then $V_2=V_2^+\cup V_2^1\cup V_2^{12}\cup V_2^-$ and $V_3=V_3^{13}\cup V_3^2\cup V_3^{23}\cup V_3^3$. By Lemma~\ref{lem-diam2-ViA-VjB}, we have $V_1\rightarrow V_2^+\rightarrow V_3$, $V_3\rightarrow V_2^-\rightarrow V_1$, $V_3^{13}\rightarrow V_2^1$ and $V_2^{12}\rightarrow V_3^2$. So $(V_2^{12},V_3^{13})$, $(V_2^1,V_3^2)$, $(V_2^1\cup V_2^{12},V_3^{23})$ and $(V_2^1\cup V_2^{12},V_3^3)$ are mixed pairs. Hence $q\leqslant \binom{p_2}{\lfloor\frac{p_2}{2}\rfloor}+\binom{p_1}{\lfloor\frac{p_1}{2}\rfloor}+\binom{p_1+p_2}{\lfloor\frac{p_1+p_2}{2}\rfloor}
+\binom{p_1+p_2}{\lfloor\frac{p_1+p_2}{2}\rfloor}\leqslant 3\binom{p_1+p_2}{\lfloor\frac{p_1+p_2}{2}\rfloor}=3\binom{p-2}{\lfloor\frac{p-2}{2}\rfloor}$. 

If $A\cap B=\emptyset$, we may assume $A=\{x_1\}$ and $B=\{x_2,x_3\}$. Then $V_2=V_2^+\cup V_2^1\cup V_2^{23}\cup V_2^-$ and $V_3=V_3^{12}\cup V_3^{13}\cup V_3^2\cup V_3^3$. By Lemma~\ref{lem-diam2-ViA-VjB}, we have $V_1\rightarrow V_2^+\rightarrow V_3$, $V_3\rightarrow V_2^-\rightarrow V_1$, $V_3^{12}\cup V_3^{13}\rightarrow V_2^1$ and $V_2^{23}\rightarrow V_3^2\cup V_3^3$. So $(V_2^{23},V_3^{12})$, $(V_2^{23},V_3^{13})$, $(V_2^1,V_3^2)$ and $(V_2^1,V_3^3)$ are mixed pairs. Hence $q\leqslant \binom{p_2}{\lfloor\frac{p_2}{2}\rfloor}+\binom{p_2}{\lfloor\frac{p_2}{2}\rfloor}+\binom{p_1}{\lfloor\frac{p_1}{2}\rfloor}
+\binom{p_1}{\lfloor\frac{p_1}{2}\rfloor}\leqslant 2\binom{p_1+p_2}{\lfloor\frac{p_1+p_2}{2}\rfloor}=2\binom{p-2}{\lfloor\frac{p-2}{2}\rfloor}$. 
\end{proof}

\begin{lemma}\label{lem-H=4-V2+V2AV2BV2C}
Let $4\leqslant p\leqslant q$, and let $D$ be a strong orientation of $\K(3,p,q)$ with diameter two. If $V_2=V_2^+\cup V_2^A\cup V_2^B\cup V_2^C$ for some nonempty and proper subsets $A$, $B$ and $C$ of $[3]$, then $q\leqslant 1+2\binom{p-2}{\lfloor\frac{p-2}{2}\rfloor}+\binom{p-1}{\lfloor\frac{p-1}{2}\rfloor}$.
\end{lemma}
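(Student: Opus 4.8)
The plan is to follow the template of the previous lemmas in this section: use Lemma~\ref{lem-diam2-ViA-VjB} to fix the orientations between $V_2$ and $V_3$ as far as possible, read off which parts $V_3^X$ must be empty and which must have size at most $1$, turn every remaining undetermined block into a mixed pair, and add up the resulting middle–binomial bounds. The first step is the reduction to finitely many configurations. There are only six nonempty proper subsets of $[3]$, and here the $\rev$ symmetry is unavailable (it carries $V_2=V_2^+\cup V_2^A\cup V_2^B\cup V_2^C$ to the configuration $V_2=V_2^A\cup V_2^B\cup V_2^C\cup V_2^-$ treated by the dual lemma), so up to permuting $\{x_1,x_2,x_3\}$ the unordered profile $(|A|,|B|,|C|)$ together with the containment pattern leaves exactly six subcases: $(1,1,1)$; $(2,2,2)$; $(1,1,2)$ with the doubleton equal to the union of the two singletons; $(1,1,2)$ with the doubleton containing exactly one of the singletons; $(1,2,2)$ with both doubletons containing the singleton; and $(1,2,2)$ with exactly one doubleton containing the singleton.

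In each subcase I would proceed uniformly. By Lemma~\ref{lem-diam2-ViA-VjB}(1), $V_3^X=\emptyset$ whenever $V_2^X\neq\emptyset$; since $V_2^+,V_2^A,V_2^B,V_2^C$ are all nonempty this always eliminates four of the eight parts of $V_3$, leaving three ``big'' parts together with $V_3^-$. Lemma~\ref{lem-diam2-ViA-VjB}(3) gives $V_2^+\rightarrow V_3$ with $|V_2^+|=1$, together with $V_2\rightarrow V_3^-$ and $|V_3^-|\leqslant 1$. For each of the three big parts $V_3^X$, Lemma~\ref{lem-diam2-ViA-VjB}(5) determines the orientation between $V_3^X$ and every $V_2^Y$ with $X\subseteq Y$ or $Y\subseteq X$ (and $V_2^+$ always points into $V_3^X$), so what remains is the mixed pair $(V_{j2},V_3^X)$, where $V_{j2}$ is the union of those $V_2^Y$ whose label $Y$ is independent with $X$, and $|V_3^X|\leqslant\binom{|V_{j2}|}{\lfloor\frac{|V_{j2}|}{2}\rfloor}$ by Lemma~\ref{lem-diam2-ViA-VjB}(4). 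Writing $p_1,p_2,p_3$ for $|V_2^A|,|V_2^B|,|V_2^C|$, so that $p_1+p_2+p_3=p-1$ with each $p_i\geqslant 1$, each $|V_{j2}|$ turns out to be either $p-1$ (this happens precisely for a big part whose label is independent with all of $A,B,C$, and such a part exists in only two of the six subcases) or a sum of at most two of the $p_i$, hence at most $p-2$. Summing the three bounds and the $\leqslant 1$ from $V_3^-$, and using the standard facts that $\binom{m}{\lfloor\frac{m}{2}\rfloor}$ is nondecreasing and superadditive in $m$, every subcase yields $q\leqslant 1+2\binom{p-2}{\lfloor\frac{p-2}{2}\rfloor}+\binom{p-1}{\lfloor\frac{p-1}{2}\rfloor}$; in fact $(1,1,1)$ and $(2,2,2)$ give only $q\leqslant 1+\binom{p-1}{\lfloor\frac{p-1}{2}\rfloor}$, and the stated bound is attained (as an upper estimate) precisely in the $(1,2,2)$ subcase where both doubletons contain the singleton.

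The real content is the bookkeeping in that extremal subcase, say $A=\{x_1\}$, $B=\{x_1,x_2\}$, $C=\{x_1,x_3\}$, so that $V_2=V_2^+\cup V_2^1\cup V_2^{12}\cup V_2^{13}$: Lemma~\ref{lem-diam2-ViA-VjB}(1) leaves $V_3=V_3^2\cup V_3^3\cup V_3^{23}\cup V_3^-$, Lemma~\ref{lem-diam2-ViA-VjB}(5) forces $V_2^{12}\rightarrow V_3^2$ and $V_2^{13}\rightarrow V_3^3$, and the three mixed pairs come out as $(V_2^1\cup V_2^{13},V_3^2)$, $(V_2^1\cup V_2^{12},V_3^3)$ and $(V_2\setminus V_2^+,V_3^{23})$, of sizes $p-1-p_2$, $p-1-p_3$ and $p-1$; this is exactly what produces the two $\binom{p-2}{\lfloor\frac{p-2}{2}\rfloor}$ terms and the $\binom{p-1}{\lfloor\frac{p-1}{2}\rfloor}$ term. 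The other five subcases are lighter: in $(1,1,1)$ and $(2,2,2)$ each big part has a mixed pair consisting of a single $V_2^Y$, and the two $(1,1,2)$ configurations and the non-extremal $(1,2,2)$ configuration are dispatched by the same mixed-pair computation, always landing at or below the claimed value after the monotonicity and superadditivity estimates. I expect the main obstacle to be purely organizational---keeping the six configurations, their surviving $V_3$-parts, and their mixed pairs straight---rather than any new idea; in particular, unlike in Lemma~\ref{lem-H=3-V2AV2BV2C}, the crude mixed-pair bounds already suffice, so no $p_i=1$ refinement is needed.
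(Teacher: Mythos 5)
Your proposal is correct and follows essentially the same route as the paper: reduce to the six configurations of $(A,B,C)$ up to relabelling, use Lemma~\ref{lem-diam2-ViA-VjB}(1),(3),(5) to empty four parts of $V_3$, bound $|V_3^-|\leqslant 1$, fix the determined orientations, and bound each surviving $V_3^X$ by its mixed pair, summing with monotonicity and superadditivity of $\binom{m}{\lfloor m/2\rfloor}$; your identification of the mixed pairs in the extremal $(1,2,2)$ case matches the paper's computation (up to relabelling of $x_1,x_2,x_3$). The only slip is the parenthetical claim that the bound is attained ``precisely'' in that subcase --- the $(1,1,2)$ configuration with $A\cup B=C$ also has a mixed pair of size $p-1$ and yields the same upper estimate $1+2\binom{p-2}{\lfloor\frac{p-2}{2}\rfloor}+\binom{p-1}{\lfloor\frac{p-1}{2}\rfloor}$ --- but this remark is not load-bearing and does not affect the proof.
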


\begin{proof}
Let $p_1=|V_2^A|$, $p_2=|V_2^B|$ and $p_3=|V_2^C|$. Then $1\leqslant p_1,p_2,p_3\leqslant p-3$ and $p_1+p_2+p_3=p-1$. We may assume $|A|\leqslant |B|\leqslant |C|$, then $(|A|,|B|,|C|)=(1,1,1)$, $(1,1,2)$, $(1,2,2)$ or $(2,2,2)$. 

Let $(|A|,|B|,|C|)=(1,1,1)$, we may suppose $A=\{x_1\}$, $B=\{x_2\}$ and $C=\{x_3\}$. Then $V_2=V_2^+\cup V_2^1\cup V_2^2\cup V_2^3$ and $V_3=V_3^{12}\cup V_3^{13}\cup V_3^{23}\cup V_3^-$. By Lemma~\ref{lem-diam2-ViA-VjB}, we have $V_1\rightarrow V_2^+\rightarrow V_3$, $V_2\rightarrow V_3^-\rightarrow V_1$, $V_3^{12}\rightarrow V_2^1\cup V_2^2$, $V_3^{13}\rightarrow V_2^1\cup V_2^3$, $V_3^{23}\rightarrow V_2^2\cup V_2^3$. So $|V_3^-|\leqslant 1$, $(V_2^3,V_3^{12})$, $(V_2^2,V_3^{13})$ and $(V_2^1,V_3^{23})$ are mixed pairs. Hence $q\leqslant \binom{p_3}{\lfloor\frac{p_3}{2}\rfloor}+\binom{p_2}{\lfloor\frac{p_2}{2}\rfloor}+\binom{p_1}{\lfloor\frac{p_1}{2}\rfloor}+1\leqslant 1+\binom{p_1+p_2+p_3}{\lfloor\frac{p_1+p_2+p_3}{2}\rfloor}=1+\binom{p-1}{\lfloor\frac{p-1}{2}\rfloor}$. 

Let $(|A|,|B|,|C|)=(1,1,2)$. Then $A\subseteq C$ or $B\subseteq C$. If $A\subseteq C$ and $B\cup C=V_1$, we may suppose $A=\{x_1\}$, $B=\{x_3\}$ and $C=\{x_1,x_2\}$. Then $V_2=V_2^+\cup V_2^1\cup V_2^{12}\cup V_2^3$ and $V_3=V_3^{13}\cup V_3^2\cup V_3^{23}\cup V_3^-$. By Lemma~\ref{lem-diam2-ViA-VjB}, we have $V_1\rightarrow V_2^+\rightarrow V_3$, $V_2\rightarrow V_3^-\rightarrow V_1$, $V_3^{13}\rightarrow V_2^1\cup V_2^3$, $V_3^{23}\rightarrow V_2^3$, $V_2^{12}\rightarrow V_3^2$. So $|V_3^-|\leqslant 1$, $(V_2^{12},V_3^{13})$, $(V_2^1\cup V_2^3,V_3^2)$ and $(V_2^1\cup V_2^{12},V_3^{23})$ are mixed pairs. Hence $q\leqslant \binom{p_2}{\lfloor\frac{p_2}{2}\rfloor}+\binom{p_1+p_3}{\lfloor\frac{p_1+p_3}{2}\rfloor}+\binom{p_1+p_2}{\lfloor\frac{p_1+p_2}{2}\rfloor}+1
\leqslant 1+\binom{p-2}{\lfloor\frac{p-2}{2}\rfloor}+\binom{p-1}{\lfloor\frac{p-1}{2}\rfloor}$. 

If $A\cup B=C$, we may suppose $A=\{x_1\}$, $B=\{x_2\}$ and $C=\{x_1,x_2\}$. Then $V_2=V_2^+\cup V_2^1\cup V_2^2\cup V_2^{12}$ and $V_3=V_3^{13}\cup V_3^{23}\cup V_3^3\cup V_3^-$. By Lemma~\ref{lem-diam2-ViA-VjB}, we have $V_1\rightarrow V_2^+\rightarrow V_3$, $V_2\rightarrow V_3^-\rightarrow V_1$, $V_3^{13}\rightarrow V_2^1$, $V_3^{23}\rightarrow V_2^2$. So $|V_3^-|\leqslant 1$, $(V_2^2\cup V_2^{12},V_3^{13})$, $(V_2^1\cup V_2^{12},V_3^{23})$ and $(V_2^1\cup V_2^2\cup V_2^{12},V_3^3)$ are mixed pairs. Hence $q\leqslant \binom{p_2+p_3}{\lfloor\frac{p_2+p_3}{2}\rfloor}+\binom{p_1+p_3}{\lfloor\frac{p_1+p_3}{2}\rfloor}+
\binom{p_1+p_2+p_3}{\lfloor\frac{p_1+p_2+p_3}{2}\rfloor}+1
\leqslant 1+2\binom{p-2}{\lfloor\frac{p-2}{2}\rfloor}+\binom{p-1}{\lfloor\frac{p-1}{2}\rfloor}$. 

Let $(|A|,|B|,|C|)=(1,2,2)$. Then $A\subseteq B\cup C=V_1$. If $A=B\cap C$, we may suppose $A=\{x_2\}$, $B=\{x_1,x_2\}$ and $C=\{x_2,x_3\}$. Then $V_2=V_2^+\cup V_2^{12}\cup V_2^2\cup V_2^{23}$ and $V_3=V_3^1\cup V_3^{13}\cup V_3^3\cup V_3^-$. By Lemma~\ref{lem-diam2-ViA-VjB}, we have $V_1\rightarrow V_2^+\rightarrow V_3$, $V_2\rightarrow V_3^-\rightarrow V_1$, $V_2^{12}\rightarrow V_3^1$, $V_2^{23}\rightarrow V_3^3$. So $|V_3^-|\leqslant 1$, $(V_2^2\cup V_2^{23},V_3^1)$, $(V_2^{12}\cup V_2^2\cup V_2^{23},V_3^{13})$ and $(V_2^{12}\cup V_2^2,V_3^3)$ are mixed pairs. Hence $q\leqslant \binom{p_2+p_3}{\lfloor\frac{p_2+p_3}{2}\rfloor}+\binom{p_1+p_2+p_3}{\lfloor\frac{p_1+p_2+p_3}{2}\rfloor}+
\binom{p_1+p_2}{\lfloor\frac{p_1+p_2}{2}\rfloor}+1\leqslant 1+2\binom{p-2}{\lfloor\frac{p-2}{2}\rfloor}+\binom{p-1}{\lfloor\frac{p-1}{2}\rfloor}$. 

If $A\neq B\cap C$, we may suppose $A=B\setminus C$, and let $A=\{x_1\}$, $B=\{x_1,x_2\}$ and $C=\{x_2,x_3\}$. Then $V_2=V_2^+\cup V_2^1\cup V_2^{12}\cup V_2^{23}$ and $V_3=V_3^{13}\cup V_3^2\cup V_3^3\cup V_3^-$. By Lemma~\ref{lem-diam2-ViA-VjB}, we have $V_1\rightarrow V_2^+\rightarrow V_3$, $V_2\rightarrow V_3^-\rightarrow V_1$, $V_2^{12}\rightarrow V_3^2$, $V_2^{23}\rightarrow V_3^2\cup V_3^3$, $V_3^{13}\rightarrow V_2^1$. So $|V_3^-|\leqslant 1$, $(V_2^{12}\cup V_2^{23},V_3^{13})$, $(V_2^1,V_3^2)$ and $(V_2^1\cup V_2^{12},V_3^3)$ are mixed pairs. Hence $q\leqslant \binom{p_2+p_3}{\lfloor\frac{p_2+p_3}{2}\rfloor}+\binom{p_1}{\lfloor\frac{p_1}{2}\rfloor}+
\binom{p_1+p_2}{\lfloor\frac{p_1+p_2}{2}\rfloor}+1\leqslant \binom{p_1+p_2+p_3}{\lfloor\frac{p_1+p_2+p_3}{2}\rfloor}+
\binom{p_1+p_2}{\lfloor\frac{p_1+p_2}{2}\rfloor}+1\leqslant \binom{p-1}{\lfloor\frac{p-1}{2}\rfloor}+\binom{p-2}{\lfloor\frac{p-2}{2}\rfloor}+1$. 

Let $(|A|,|B|,|C|)=(2,2,2)$, we may suppose $A=\{x_1,x_2\}$, $B=\{x_1,x_3\}$ and $C=\{x_2, x_3\}$. Then $V_2=V_2^+\cup V_2^{12}\cup V_2^{13}\cup V_2^{23}$ and $V_3=V_3^1\cup V_3^2\cup V_3^3\cup V_3^-$. By Lemma~\ref{lem-diam2-ViA-VjB}, we have $V_1\rightarrow V_2^+\rightarrow V_3$, $V_2\rightarrow V_3^-\rightarrow V_1$, $V_2^{12}\rightarrow V_3^1\cup V_3^2$, $V_2^{13}\rightarrow V_3^1\cup V_3^3$, $V_2^{23}\rightarrow V_3^2\cup V_3^3$. So $|V_3^-|\leqslant 1$, $(V_2^{23},V_3^1)$, $(V_2^{13},V_3^2)$ and $(V_2^{12},V_3^3)$ are mixed pairs. Hence $q\leqslant \binom{p_3}{\lfloor\frac{p_3}{2}\rfloor}+\binom{p_2}{\lfloor\frac{p_2}{2}\rfloor}+\binom{p_1}{\lfloor\frac{p_1}{2}\rfloor}+1\leqslant 1+\binom{p_1+p_2+p_3}{\lfloor\frac{p_1+p_2+p_3}{2}\rfloor}=1+\binom{p-1}{\lfloor\frac{p-1}{2}\rfloor}$. 
\end{proof}

\begin{lemma}\label{lem-H=4-V2AV2BV2CV2E}
Let $4\leqslant p\leqslant q$, and let $D$ be a strong orientation of $\K(3,p,q)$ with diameter two. If $V_2=V_2^A\cup V_2^B\cup V_2^C\cup V_2^E$ for some nonempty and proper subsets $A$, $B$, $C$ and $E$ of $[3]$, then $q\leqslant 2+2\binom{p-1}{\lfloor\frac{p-1}{2}\rfloor}$.
\end{lemma}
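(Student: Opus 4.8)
The plan is to extract the shape of $V_3$ from the hypothesis on $V_2$ and then bound $|V_3|$ block by block, using only Lemma~\ref{lem-diam2-ViA-VjB}. Write $\mathbb{F}=\{A,B,C,E\}$; since we are in the ``exactly four'' regime, $\mathbb{F}$ consists of four distinct nonempty proper subsets of $[3]$, each $V_2^X$ $(X\in\mathbb{F})$ is nonempty, and $V_2^+=V_2^-=\emptyset$. Part~(1) of Lemma~\ref{lem-diam2-ViA-VjB} then forces $V_3^X=\emptyset$ for every $X\in\mathbb{F}$. As $[3]$ has exactly six nonempty proper subsets, exactly two of them, call them $X_1$ and $X_2$, are not in $\mathbb{F}$; hence $V_3=V_3^+\cup V_3^{X_1}\cup V_3^{X_2}\cup V_3^-$, and by part~(3) we already have $|V_3^+|\leqslant 1$ and $|V_3^-|\leqslant 1$ (empty blocks contributing $0$).

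It remains to bound $|V_3^{X_k}|$ for $k\in\{1,2\}$. For this I would view $V_3^{X_k}$ together with the union $W_k=\bigcup\{V_2^X\mid X\in\mathbb{F},\ X\text{ independent with }X_k\}$ as a mixed pair in the sense of part~(4). The point is that any $X\in\mathbb{F}$ which is \emph{not} independent with $X_k$ satisfies $X\subsetneq X_k$ or $X_k\subsetneq X$ (two distinct nonempty proper subsets of $[3]$ are either independent or nested), so part~(5) forces $V_2^X$ and $V_3^{X_k}$ to be joined entirely in one direction; therefore, in the partition $V_2=V_{21}\cup V_{22}$ attached by part~(4) to $V_i^A:=V_3^{X_k}$ (with $i=3,\ j=2$), the ``mixed'' block $V_{22}$ is contained in $W_k$. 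Hence part~(4) — or part~(2), in the degenerate case $V_{22}=\emptyset$ — yields $|V_3^{X_k}|\leqslant\binom{m_k}{\lfloor m_k/2\rfloor}$ with $m_k:=|W_k|$.

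The only step that needs any thought is the estimate $m_k\leqslant p-1$; this is the crux of the argument, although it reduces to a small counting fact about subsets of $[3]$. Concretely, I must show that at least one $X\in\mathbb{F}$ is nested with $X_k$, so that its nonempty block $V_2^X$ contributes at least one vertex to $V_2\setminus W_k$. The subsets nested with or equal to $X_k$ form a three-element family: if $X_k$ is a singleton it is $X_k$ together with the two $2$-subsets containing it, and if $X_k$ is a $2$-subset it is $X_k$ together with the two singletons it contains. Since $X_k$ itself is one of the only two nonempty proper subsets lying outside $\mathbb{F}$, at least one of the remaining two members of this family lies in $\mathbb{F}$, giving $m_k\leqslant p-1$. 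Monotonicity of $n\mapsto\binom{n}{\lfloor n/2\rfloor}$ then gives $|V_3^{X_k}|\leqslant\binom{p-1}{\lfloor(p-1)/2\rfloor}$ for $k=1,2$, and summing the four blocks yields $q=|V_3|\leqslant 2+2\binom{p-1}{\lfloor(p-1)/2\rfloor}$.

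Finally, I would note that this uniform treatment avoids the alternative route of splitting into the three symmetry types of four-element families — up to permuting $[3]$ and reversing all arcs these are $V_2=V_2^1\cup V_2^2\cup V_2^3\cup V_2^{12}$, $V_2=V_2^1\cup V_2^2\cup V_2^{12}\cup V_2^{13}$, and $V_2=V_2^1\cup V_2^2\cup V_2^{13}\cup V_2^{23}$ — and checking each with the same mixed-pair bookkeeping used in Lemmas~\ref{lem-H=4-V2+V2AV2BV2-} and~\ref{lem-H=4-V2+V2AV2BV2C}; that route also shows that only the second type is tight, the other two obeying the stronger bound $2+2\binom{p-2}{\lfloor(p-2)/2\rfloor}$.
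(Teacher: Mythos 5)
Your argument is correct, and it reaches the stated bound by a genuinely different route from the paper. The paper reduces to the three symmetry types via $\rev(D)$ and permutations of $[3]$ (its cases $(|A|,|B|,|C|,|E|)=(1,1,1,2)$ and the two subcases of $(1,1,2,2)$) and then runs the mixed-pair bookkeeping separately in each, obtaining the bounds $2+2\binom{p-2}{\lfloor\frac{p-2}{2}\rfloor}$, $2+2\binom{p-1}{\lfloor\frac{p-1}{2}\rfloor}$ and $2+\binom{p}{\lfloor\frac{p}{2}\rfloor}$ respectively, the last two of which are compared via Pascal's rule. You instead give a single uniform argument: you observe that $V_3$ is supported on $V_3^{\pm}$ together with the two nonempty proper subsets $X_1,X_2$ omitted from $\mathbb{F}=\{A,B,C,E\}$, that any two distinct nonempty proper subsets of $[3]$ are either nested or independent (so part~(5) of Lemma~\ref{lem-diam2-ViA-VjB} puts every nested block into the ``fully oriented'' side $T_1$ and the mixed side $V_{22}$ inside $W_k$), and that the three-element chain through $X_k$ must meet $\mathbb{F}$ because only $X_1,X_2$ lie outside it, forcing $m_k\leqslant p-1$. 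This packages all three of the paper's cases into one counting fact about subsets of $[3]$; what it gives up is the sharper per-case information (your own closing remark recovers it), and in the third type your uniform bound $2+2\binom{p-2}{\lfloor\frac{p-2}{2}\rfloor}$ is in fact slightly stronger than the paper's $2+\binom{p}{\lfloor\frac{p}{2}\rfloor}$ for small $p$. The degenerate case $V_{22}=\emptyset$ is correctly deferred to part~(2), and the monotonicity of $n\mapsto\binom{n}{\lfloor n/2\rfloor}$ closes the estimate, so no gap remains.
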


\begin{proof}
The situation of $V_2=V_2^A\cup V_2^B\cup V_2^C\cup V_2^E$ in $D$ is the same as $V_2=V_2^{\overline{A}}\cup V_2^{\overline{B}}\cup V_2^{\overline{C}}\cup V_2^{\overline{E}}$ in $\rev(D)$ where $\overline{A}=[3]\setminus A$, $\overline{B}=[3]\setminus B$, $\overline{C}=[3]\setminus C$ and $\overline{E}=[3]\setminus E$. We may suppose $|A|\leqslant |B|\leqslant |C|\leqslant |E|$, and so we only need to consider $(|A|,|B|,|C|,|E|)=(1,1,1,2)$ or $(1,1,2,2)$. Let $p_1=|V_2^A|$, $p_2=|V_2^B|$, $p_3=|V_2^C|$ and $p_4=|V_2^E|$. Then $1\leqslant p_1,p_2,p_3,p_4\leqslant p-3$ and $p_1+p_2+p_3+p_4=p$. 

Let $(|A|,|B|,|C|,|E|)=(1,1,1,2)$. Then $A\cup B\cup C=V_1$, we may assume $A=\{x_1\}$, $B=\{x_2\}$, $C=\{x_3\}$ and $E=\{x_1,x_2\}$. We have $V_2=V_2^1\cup V_2^{12}\cup V_2^2\cup V_2^3$ and $V_3=V_3^+\cup V_3^{13}\cup V_3^{23}\cup V_3^-$. By Lemma~\ref{lem-diam2-ViA-VjB}, we have $V_1\rightarrow V_3^+\rightarrow V_2$, $V_2\rightarrow V_3^-\rightarrow V_1$, $V_3^{13}\rightarrow V_2^1$, $V_3^{23}\rightarrow V_2^2$ and $V_3^{13}\cup V_3^{23}\rightarrow V_2^3$. So $|V_3^+|\leqslant 1$, $|V_3^-|\leqslant 1$, $(V_2^2\cup V_2^{12},V_3^{13})$ and $(V_2^1\cup V_2^{12},V_3^{23})$ are mixed pairs. Hence $q\leqslant 1+\binom{p_2+p_3}{\lfloor\frac{p_2+p_3}{2}\rfloor}+\binom{p_1+p_3}{\lfloor\frac{p_1+p_3}{2}\rfloor}+1\leqslant 2+2\binom{p-2}{\lfloor\frac{p-2}{2}\rfloor}$. 

Let $(|A|,|B|,|C|,|E|)=(1,1,2,2)$. Then $C\cup E=V_1$. If $A=C\setminus E$ and $B=C\cap E$, we may assume $A=\{x_1\}$, $B=\{x_2\}$, $C=\{x_1,x_2\}$ and $E=\{x_2,x_3\}$. We have $V_2=V_2^1\cup V_2^{12}\cup V_2^2\cup V_2^{23}$ and $V_3=V_3^+\cup V_3^{13}\cup V_3^3\cup V_3^-$. By Lemma~\ref{lem-diam2-ViA-VjB}, we have $V_1\rightarrow V_3^+\rightarrow V_2$, $V_2\rightarrow V_3^-\rightarrow V_1$, $V_3^{13}\rightarrow V_2^1$ and $V_2^{23}\rightarrow V_3^3$. So $|V_3^+|\leqslant 1$, $|V_3^-|\leqslant 1$, $(V_2^2\cup V_2^{12}\cup V_2^{23},V_3^{13})$ and $(V_2^1\cup V_2^{12}\cup V_2^2,V_3^3)$ are mixed pairs. Hence $q\leqslant 1+\binom{p_2+p_3+p_4}{\lfloor\frac{p_2+p_3+p_4}{2}\rfloor}+\binom{p_1+p_2+p_3}{\lfloor\frac{p_1+p_2+p_3}{2}\rfloor}+1\leqslant 2+2\binom{p-1}{\lfloor\frac{p-1}{2}\rfloor}$. 

If $A=C\setminus E$ and $B=E\setminus C$, we may assume $A=\{x_1\}$, $B=\{x_3\}$, $C=\{x_1,x_2\}$ and $E=\{x_2,x_3\}$. We have $V_2=V_2^1\cup V_2^{12}\cup V_2^{23}\cup V_2^3$ and $V_3=V_3^+\cup V_3^{13}\cup V_3^2\cup V_3^-$. By Lemma~\ref{lem-diam2-ViA-VjB}, we have $V_1\rightarrow V_3^+\rightarrow V_2$, $V_2\rightarrow V_3^-\rightarrow V_1$, $V_3^{13}\rightarrow V_2^1\cup V_2^3$ and $V_2^{12}\cup V_2^{23}\rightarrow V_3^2$. So $|V_3^+|\leqslant 1$, $|V_3^-|\leqslant 1$, $(V_2^{12}\cup V_2^{23},V_3^{13})$ and $(V_2^1\cup V_2^3,V_3^2)$ are mixed pairs. Hence $q\leqslant 1+\binom{p_2+p_3}{\lfloor\frac{p_2+p_3}{2}\rfloor}+\binom{p_1+p_4}{\lfloor\frac{p_1+p_4}{2}\rfloor}+1\leqslant 2+\binom{p_1+p_2+p_3+p_4}{\lfloor\frac{p_1+p_2+p_3+p_4}{2}\rfloor}=2+\binom{p}{\lfloor\frac{p}{2}\rfloor}$. 
\end{proof}

By the above lemmas in this subsection, we have the following result. 

\begin{theorem}\label{thm-H=4}
Let $4\leqslant p\leqslant q$, and let $D$ be a strong orientation of $\K(3,p,q)$ with diameter two. If $|\mathbb{H}|=4$, then 
\begin{center}
  $q\leqslant \max\Bigl\{1+2\binom{p-2}{\lfloor\frac{p-2}{2}\rfloor}+\binom{p-1}{\lfloor\frac{p-1}{2}\rfloor}, 2+2\binom{p-1}{\lfloor\frac{p-1}{2}\rfloor}\Bigr\}$.
\end{center}
\end{theorem}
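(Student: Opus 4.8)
The plan is to prove Theorem~\ref{thm-H=4} by simply collecting the four bounds produced by Lemmas~\ref{lem-H=4-V2+V2AV2BV2-}, \ref{lem-H=4-V2+V2AV2BV2C}, and \ref{lem-H=4-V2AV2BV2CV2E} (the fourth type of partition, $V_2=V_2^A\cup V_2^B\cup V_2^C\cup V_2^-$, having been reduced to the $V_2^+\cup V_2^A\cup V_2^B\cup V_2^C$ type via reversal at the start of the subsection) and taking their maximum. Concretely: given a strong orientation $D$ of $\K(3,p,q)$ with diameter two and $|\mathbb{H}|=4$, the partition of $V_2$ falls into one of the three surviving shapes, so $q$ is bounded by $3\binom{p-2}{\lfloor\frac{p-2}{2}\rfloor}$, or by $1+2\binom{p-2}{\lfloor\frac{p-2}{2}\rfloor}+\binom{p-1}{\lfloor\frac{p-1}{2}\rfloor}$, or by $2+2\binom{p-1}{\lfloor\frac{p-1}{2}\rfloor}$. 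The asserted bound is the maximum of the latter two, so it suffices to check that each of the three is dominated by $\max\bigl\{1+2\binom{p-2}{\lfloor\frac{p-2}{2}\rfloor}+\binom{p-1}{\lfloor\frac{p-1}{2}\rfloor},\ 2+2\binom{p-1}{\lfloor\frac{p-1}{2}\rfloor}\bigr\}$ for $p\geqslant 4$.

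The only nontrivial verification is that $3\binom{p-2}{\lfloor\frac{p-2}{2}\rfloor}\leqslant 2+2\binom{p-1}{\lfloor\frac{p-1}{2}\rfloor}$ for $p\geqslant 4$; the other two candidates are already among the two terms whose max we take, so they need no argument. I would reduce the first inequality to $\binom{p-2}{\lfloor\frac{p-2}{2}\rfloor}\leqslant 2\bigl(\binom{p-1}{\lfloor\frac{p-1}{2}\rfloor}-\binom{p-2}{\lfloor\frac{p-2}{2}\rfloor}\bigr)+2$, i.e.\ to the fact that $2\binom{p-1}{\lfloor\frac{p-1}{2}\rfloor}\geqslant 3\binom{p-2}{\lfloor\frac{p-2}{2}\rfloor}$, which is exactly the inequality $3\binom{p-1}{\lfloor\frac{p-1}{2}\rfloor}<2\binom{p}{\lfloor\frac{p}{2}\rfloor}$ stated (with $p$ shifted by one) in the displayed chain of inequalities just before the theorem; for $p=4$ one checks it directly from the table ($3\cdot\binom{2}{1}=6\leqslant 2+2\cdot\binom{3}{1}=8$), and for $p\geqslant 5$ it follows from the quoted estimate with room to spare. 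Thus all three candidate bounds lie below the claimed maximum, completing the proof.

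The main obstacle — such as it is — is purely bookkeeping: making sure the case reduction at the head of the subsection genuinely covers all four shapes of a size-four sub-collection of $\mathbb{H}$, and that in the $V_2=V_2^+\cup V_2^A\cup V_2^B\cup V_2^C$ case Lemma~\ref{lem-H=4-V2+V2AV2BV2C} has exhausted all the subcases $(|A|,|B|,|C|)\in\{(1,1,1),(1,1,2),(1,2,2),(2,2,2)\}$ together with the internal sub-branches ($A\subseteq C$ vs.\ $A\cup B=C$, etc.). Granting those lemmas as stated, there is no further difficulty; the proof is a one-line appeal to the three lemmas plus the elementary binomial inequality already recorded in the text.

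\begin{proof}
By Lemmas~\ref{lem-H=4-V2+V2AV2BV2-}, \ref{lem-H=4-V2+V2AV2BV2C} and \ref{lem-H=4-V2AV2BV2CV2E} (recall that the case $V_2=V_2^A\cup V_2^B\cup V_2^C\cup V_2^-$ reduces to $V_2=V_2^+\cup V_2^{\overline{A}}\cup V_2^{\overline{B}}\cup V_2^{\overline{C}}$ in $\rev(D)$), we have
\[
q\leqslant \max\Bigl\{3\binom{p-2}{\lfloor\frac{p-2}{2}\rfloor},\ 1+2\binom{p-2}{\lfloor\frac{p-2}{2}\rfloor}+\binom{p-1}{\lfloor\frac{p-1}{2}\rfloor},\ 2+2\binom{p-1}{\lfloor\frac{p-1}{2}\rfloor}\Bigr\}.
\]
For $p\geqslant 4$, since $3\binom{p-1}{\lfloor\frac{p-1}{2}\rfloor}<2\binom{p}{\lfloor\frac{p}{2}\rfloor}$, replacing $p$ by $p-1$ gives $3\binom{p-2}{\lfloor\frac{p-2}{2}\rfloor}\leqslant 2\binom{p-1}{\lfloor\frac{p-1}{2}\rfloor}<2+2\binom{p-1}{\lfloor\frac{p-1}{2}\rfloor}$ (for $p=4$ one checks $3\cdot 2=6\leqslant 2+2\cdot 3=8$ directly). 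Hence the first term in the maximum is dominated by the third, and
\[
q\leqslant \max\Bigl\{1+2\binom{p-2}{\lfloor\frac{p-2}{2}\rfloor}+\binom{p-1}{\lfloor\frac{p-1}{2}\rfloor},\ 2+2\binom{p-1}{\lfloor\frac{p-1}{2}\rfloor}\Bigr\},
\]
as claimed.
\end{proof}
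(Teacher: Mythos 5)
Your proposal is correct and follows the same route as the paper: Theorem~\ref{thm-H=4} is obtained by combining Lemmas~\ref{lem-H=4-V2+V2AV2BV2-}, \ref{lem-H=4-V2+V2AV2BV2C} and \ref{lem-H=4-V2AV2BV2CV2E} together with the reversal reduction stated at the head of the subsection. Your explicit check that $3\binom{p-2}{\lfloor\frac{p-2}{2}\rfloor}\leqslant 2+2\binom{p-1}{\lfloor\frac{p-1}{2}\rfloor}$ for $p\geqslant 4$ is exactly the (implicit) bookkeeping the paper relies on to drop the first bound from the stated maximum.
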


\subsection{There are exactly five nonempty sets in $\mathbb{H}$}\label{subsec-H=5}

In this subsection, we consider the case that there are exactly three empty sets in $\mathbb{H}$. The case $V_2^-=V_2^A=V_2^B=\emptyset$ in $D$ is the same as $V_2^+=V_2^{\overline{A}}=V_2^{\overline{B}}=\emptyset$ in $\rev(D)$ where $\overline{A}=[3]\setminus A$ and $\overline{B}=[3]\setminus B$. Hence we only need to consider the following three cases.

\begin{lemma}\label{lem-H=5-V2+V2AV2-}
Let $5\leqslant p\leqslant q$, and let $D$ be a strong orientation of $\K(3,p,q)$ with diameter two. If there are exactly three empty sets $V_2^+=V_2^A=V_2^-=\emptyset$ in $\mathbb{H}$ for some nonempty and proper subset $A$ of $[3]$, then $q\leqslant 2+\binom{p-2}{\lfloor\frac{p-2}{2}\rfloor}$.
\end{lemma}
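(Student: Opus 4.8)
The plan is to follow the same template used throughout this section: fix the partition $V_2 = V_2^A \cup V_2^B \cup V_2^C$ with exactly three empty sets in $\mathbb{H}$, here the three missing sets being $V_2^+$, $V_2^A$, and $V_2^-$, so that $V_2$ is supported on the five sets indexed by the remaining subsets of $[3]$. Since $|A|=1$ or $|A|=2$, and these two cases are interchanged by passing to $\rev(D)$ (replacing $A$ by $\overline{A}$), I would assume without loss of generality $|A|=1$, say $A=\{x_1\}$; then $V_2 = V_2^1{}^{c}$-complement is supported on $V_2^2, V_2^3, V_2^{12}, V_2^{13}, V_2^{23}$ — wait, more precisely $V_2 = V_2^2 \cup V_2^3 \cup V_2^{12} \cup V_2^{13} \cup V_2^{23}$ (the five nonempty candidates once $V_2^+, V_2^1, V_2^-$ are empty). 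Let $p_2=|V_2^2|$, etc., with the $p$'s summing to $p$ and each at least $1$.

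Next I would pin down the orientation of $V_3$ relative to $V_1$. The key leverage is that $x_1$ has a fixed relationship to all of $V_2$: since $V_2^+ = V_2^1 = \emptyset$ and $V_2^-=\emptyset$, every vertex of $V_2$ lies in some $V_2^B$ with $1 \notin B$ but $B \neq \emptyset$, hence $V_2 \rightarrow x_1$; dually from $V_2^-=V_2^+=\emptyset$ and $V_2^1$ being the only remaining $1$-containing-or-not... actually the cleanest statement is $V_2 \to x_1$ (since no $V_2^B$ with $1\in B$ is nonempty). Then $\diam(x_1, V_3)\le 2$ forces $x_1 \rightarrow V_3$, which kills $V_3^-$, $V_3^1$ and forces $V_3 = V_3^+ \cup V_3^2 \cup V_3^3 \cup V_3^{12} \cup V_3^{13} \cup V_3^{23}$ minus the $1$-excluding... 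I would recompute: $x_1 \to V_3$ means $1 \in$ the index of every nonempty $V_3$-block that is determined, i.e. $V_3 = V_3^+ \cup V_3^{12} \cup V_3^{13} \cup V_3^1$. Hmm — I need to be careful about which convention ($x_i \to V_3^B$ iff $i \in B$) the paper uses; from Lemma~\ref{lem-diam2-ViA-VjB} and the notation section, $V_3^B$ has $\{x_i : i \in B\} \to V_3^B \to \{x_i : i \in \overline{B}\}$, so $x_1 \to V_3$ kills exactly those blocks with $1 \notin B$, leaving $V_3 = V_3^+ \cup V_3^1 \cup V_3^{12} \cup V_3^{13}$. Then Lemma~\ref{lem-diam2-ViA-VjB}(3) gives $|V_3^+| \le 1$, and applying parts (1),(2),(5) of that lemma I would read off the forced orientations between each surviving $V_3$-block and the blocks of $V_2$, isolating for each $V_3$-block a mixed pair $(W, V_3^{\text{block}})$ with $W \subseteq V_2$ of size at most $p-|V_2^1 \text{-type missing}|$; then Lemma~\ref{lem-diam2-ViA-VjB}(4) (or directly Sperner) bounds $|V_3^{\text{block}}| \le \binom{|W|}{\lfloor |W|/2\rfloor}$, and summing over the (at most three non-singleton) blocks plus the $\le 1$ from $V_3^+$ yields $q \le 2 + \binom{p-2}{\lfloor (p-2)/2\rfloor}$ after bounding each $|W|$ by $p-2$ — the $-2$ coming from the fact that two of the $p_i$ are excluded from each relevant $W$ because those $V_2$-blocks are forced to point the "wrong" way uniformly.

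The main obstacle, and the step I would spend the most care on, is the bookkeeping of which $V_2$-blocks enter the mixed pair for each surviving $V_3$-block: for a $V_3$-block indexed by $B$, a $V_2$-block indexed by $B'$ is forced (one way or the other) precisely when $B, B'$ are not independent, i.e. when $B \subseteq B'$ or $B' \subseteq B$, by Lemma~\ref{lem-diam2-ViA-VjB}(5) and the $\diam\le 2$ condition applied to $x_i$'s; the mixed pair then consists of exactly the $V_2$-blocks $B'$ that ARE independent from $B$. I would tabulate this for $B \in \{1, 12, 13\}$ against $B' \in \{2, 3, 12, 13, 23\}$, check that in each case the independent-from-$B$ blocks have total size at most $p-2$ (using $p_i \ge 1$ for the two dependent blocks), and also verify the degenerate sub-cases where some $p_i=1$ might let Lemma~\ref{lem-diam2-ViA-VjB}(2) collapse a block to a single vertex and tighten the count — exactly as the $p\geq 5$ hypothesis is used to absorb these into the uniform bound $2+\binom{p-2}{\lfloor(p-2)/2\rfloor}$. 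The remaining sub-case $|A|=2$ is then dispatched by the $\rev(D)$ symmetry noted at the top, so no separate argument is needed.
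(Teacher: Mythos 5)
There is a genuine error at the structural heart of your argument: the claim that $V_2\rightarrow x_1$. With $A=\{x_1\}$ and exactly three empty sets in $\mathbb{H}$, all five of $V_2^2,V_2^3,V_2^{12},V_2^{13},V_2^{23}$ are nonempty, and by definition $x_1\rightarrow V_2^{12}$ and $x_1\rightarrow V_2^{13}$; so it is simply false that every vertex of $V_2$ points into $x_1$. Consequently your deduction ``$\diam(x_1,V_3)\leqslant 2$ forces $x_1\rightarrow V_3$'' has a false premise, and the support you end up with, $V_3=V_3^+\cup V_3^1\cup V_3^{12}\cup V_3^{13}$, is wrong. Even taken at face value it would not yield the stated bound: $V_3^{12}$ and $V_3^{13}$ would each only be controlled by a mixed pair against roughly $p-2$ vertices of $V_2$, giving three binomial terms rather than one, so the numerology $q\leqslant 2+\binom{p-2}{\lfloor\frac{p-2}{2}\rfloor}$ would not follow from your bookkeeping.

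The step you are missing is the direct application of Lemma~\ref{lem-diam2-ViA-VjB}(1), which is exactly what the ``exactly three empty sets'' hypothesis is for: since $V_2^B\neq\emptyset$ for every $B\in\{\{2\},\{3\},\{1,2\},\{1,3\},\{2,3\}\}$, part (1) forces $V_3^B=\emptyset$ for all five of these $B$, so $V_3=V_3^+\cup V_3^1\cup V_3^-$ with no further argument about $x_1$. Then part (3) gives $|V_3^+|\leqslant 1$ and $|V_3^-|\leqslant 1$, part (5) gives $V_2^{12}\cup V_2^{13}\rightarrow V_3^1$, and the single mixed pair $(V_2^2\cup V_2^3\cup V_2^{23},V_3^1)$ with $|V_2^2\cup V_2^3\cup V_2^{23}|\leqslant p-2$ yields $|V_3^1|\leqslant\binom{p-2}{\lfloor\frac{p-2}{2}\rfloor}$, whence $q\leqslant 2+\binom{p-2}{\lfloor\frac{p-2}{2}\rfloor}$. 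Your reduction to $|A|=1$ via $\rev(D)$ and your general mixed-pair/Sperner template are fine; it is the identification of the surviving $V_3$-blocks that needs to be repaired as above.
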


\begin{proof}
The case $V_2^+=V_2^A=V_2^-=\emptyset$ in $D$ is the same as $V_2^+=V_2^{\overline{A}}=V_2^-=\emptyset$ in $\rev(D)$ where $\overline{A}=[3]\setminus A$. Hence we may assume $|A|=1$ and let $A=\{x_1\}$. Then $V_2=V_2^2\cup V_2^3\cup V_2^{12}\cup V_2^{13}\cup V_2^{23}$ and $V_3=V_3^+\cup V_3^1\cup V_3^-$. By Lemma~\ref{lem-diam2-ViA-VjB}, we have $V_1\rightarrow V_3^+\rightarrow V_2$, $V_2\rightarrow V_3^-\rightarrow V_1$, $V_2^{12}\cup V_2^{13}\rightarrow V_3^1$. So $|V_3^+|\leqslant 1$, $|V_3^-|\leqslant 1$, $(V_2^2\cup V_2^3\cup V_2^{23},V_3^1)$ is a mixed pair. Hence $q\leqslant 1+\binom{p-2}{\lfloor\frac{p-2}{2}\rfloor}+1=2+\binom{p-2}{\lfloor\frac{p-2}{2}\rfloor}$. 
\end{proof}

\begin{lemma}\label{lem-H=5-V2+V2AV2B}
Let $5\leqslant p\leqslant q$, and let $D$ be a strong orientation of $\K(3,p,q)$ with diameter two. If there are exactly three empty sets $V_2^+=V_2^A=V_2^B=\emptyset$ in $\mathbb{H}$ for two nonempty and proper subsets $A$, $B$ of $[3]$, then $q\leqslant 1+2\binom{p-3}{\lfloor\frac{p-3}{2}\rfloor}$.
\end{lemma}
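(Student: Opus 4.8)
The plan is to classify the possibilities for the pair $(A,B)$ and then treat separately the configurations with $A\cap B\neq\emptyset$ and those with $A\cap B=\emptyset$. Since $A,B$ are distinct proper nonempty subsets of $[3]$, both have size $1$ or $2$ and $|A\cap B|\leqslant 1$; up to relabelling $[3]$ this leaves the two ``disjoint'' cases $\{A,B\}=\{\{1\},\{2\}\}$, $\{\{1\},\{2,3\}\}$ and the two ``overlapping'' cases $\{A,B\}=\{\{1\},\{1,2\}\}$, $\{\{1,2\},\{1,3\}\}$.

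The first thing I would do is rule out the overlapping cases entirely. Suppose $1\in A\cap B$. Then $A$ and $B$ are two of the three proper nonempty subsets of $[3]$ containing $1$, so one such subset $C_0$ is left over and $V_2^{C_0}\neq\emptyset$; moreover $V_2^{C_0}$ is the only nonempty member of $\mathbb{H}$ whose index contains $1$ (the others being $V_2^2$, $V_2^3$, $V_2^{23}$, $V_2^-$). By Lemma~\ref{lem-diam2-ViA-VjB}(1) the sets $V_3^{C_0}$, $V_3^2$, $V_3^3$, $V_3^{23}$, $V_3^-$ are all empty, so every nonempty set $V_3^E$ has $1$ in its index, i.e.\ $x_1\rightarrow V_3^E$. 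Taking any $u\in V_2^{C_0}$ we have $x_1\rightarrow u$ and hence $\partial_D(u,x_1)\geqslant 2$; but a length-$2$ path $u\rightarrow w\rightarrow x_1$ would force $w\in N_D^-(x_1)$, and $N_D^-(x_1)$ meets neither $V_1$ (independent) nor $V_3$ (no nonempty block of $V_3$ points to $x_1$), so $w\in V_2$, impossible since $V_2$ is independent and $u\in V_2$. Thus $\partial_D(u,x_1)\geqslant 3$, contradicting $\diam(D)=2$: the overlapping configurations do not occur.

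For the two disjoint cases I would then run the standard mixed-pair bookkeeping. Take $A=\{1\}$, $B=\{2\}$: the nonempty members of $\mathbb{H}$ are $V_2^3,V_2^{12},V_2^{13},V_2^{23},V_2^-$, so by Lemma~\ref{lem-diam2-ViA-VjB}(1) and (3) the only possibly nonempty sets of $V_3$ are $V_3^+,V_3^1,V_3^2$ with $|V_3^+|\leqslant 1$; Lemma~\ref{lem-diam2-ViA-VjB}(5) determines $V_3^1\rightarrow V_2^-$, $V_2^{12}\cup V_2^{13}\rightarrow V_3^1$ and $V_3^2\rightarrow V_2^-$, $V_2^{12}\cup V_2^{23}\rightarrow V_3^2$, leaving the mixed pairs $(V_2^3\cup V_2^{23},V_3^1)$ and $(V_2^3\cup V_2^{13},V_3^2)$. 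Because $|V_2^-|=1$ and $V_2^3,V_2^{12},V_2^{13},V_2^{23}$ are nonempty with sizes summing to $p-1$, each of $|V_2^3|+|V_2^{23}|$ and $|V_2^3|+|V_2^{13}|$ is at most $p-3$, so Lemma~\ref{lem-diam2-ViA-VjB}(4) together with the monotonicity of $\binom{n}{\lfloor n/2\rfloor}$ gives $q=|V_3^+|+|V_3^1|+|V_3^2|\leqslant 1+2\binom{p-3}{\lfloor\frac{p-3}{2}\rfloor}$. The case $A=\{1\}$, $B=\{2,3\}$ is entirely analogous: the possibly nonempty $V_3$-sets are $V_3^+,V_3^1,V_3^{23}$ with $|V_3^+|\leqslant 1$, the mixed pairs come out as $(V_2^2\cup V_2^3,V_3^1)$ and $(V_2^{12}\cup V_2^{13},V_3^{23})$, each $V_2$-side has size at most $p-3$, and the same bound follows.

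I expect the genuine obstacle to be the overlapping cases: a naive mixed-pair count there only controls $q$ by $1+2\binom{p-2}{\lfloor\frac{p-2}{2}\rfloor}$ (since $V_2^-$ is the sole forcibly small block), which is too large, so one really must exploit the rigidity that $1$ lies in the index of every nonempty $V_3^E$ and conclude that such configurations are incompatible with diameter $2$. After that, the disjoint cases are mechanical; the only care needed is to verify, via Lemma~\ref{lem-diam2-ViA-VjB}, exactly which blocks of $V_3$ survive and which orientations between $V_2$-blocks and $V_3$-blocks are forced.
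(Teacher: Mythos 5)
Your proof is correct and follows essentially the same route as the paper's: the overlapping cases are eliminated by observing that the common index $i$ forces $x_i\rightarrow V_3$, hence $V_2\rightarrow x_i$, contradicting the nonemptiness of the leftover block $V_2^{C_0}$, and the disjoint cases are handled by exactly the same two mixed pairs $(V_2^3\cup V_2^{23},V_3^1)$, $(V_2^3\cup V_2^{13},V_3^2)$ (resp. $(V_2^2\cup V_2^3,V_3^1)$, $(V_2^{12}\cup V_2^{13},V_3^{23})$) with sides of size at most $p-3$. The only difference is cosmetic: you merge the paper's two overlapping subcases ($(|A|,|B|)=(2,2)$ and $A\subseteq B$) into a single argument via the common element of $A\cap B$.
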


\begin{proof}
If $(|A|,|B|)=(2,2)$, then $A\cup B=V_1$, we may assume $A=\{x_1,x_2\}$ and $B=\{x_2,x_3\}$. We have $V_2=V_2^1\cup V_2^2\cup V_2^3\cup V_2^{13}\cup V_2^-$ and $V_3=V_3^+\cup V_3^{12}\cup V_3^{23}$. We know $x_2\rightarrow V_3$, since $\diam(V_2,x_2)\leqslant 2$, then we get $V_2\rightarrow x_2$, this means $V_2^2=\emptyset$, which is a contradiction. Hence $(|A|,|B|)=(1,1)$ or $(1,2)$. 

Let $(|A|,|B|)=(1,1)$. We may assume $A=\{x_1\}$ and $B=\{x_2\}$. Then $V_2=V_2^3\cup V_2^{12}\cup V_2^{13}\cup V_2^{23}\cup V_2^-$ and $V_3=V_3^+\cup V_3^1\cup V_3^2$. By Lemma~\ref{lem-diam2-ViA-VjB}, we have $V_1\rightarrow V_3^+\rightarrow V_2$, $V_3\rightarrow V_2^-\rightarrow V_1$, $V_2^{12}\cup V_2^{13}\rightarrow V_3^1$, $V_2^{12}\cup V_2^{23}\rightarrow V_3^2$. So $|V_3^+|\leqslant 1$, $(V_2^3\cup V_2^{23},V_3^1)$ and $(V_2^3\cup V_2^{13},V_3^2)$ are mixed pairs. Hence $q\leqslant 1+\binom{p-3}{\lfloor\frac{p-3}{2}\rfloor}+\binom{p-3}{\lfloor\frac{p-3}{2}\rfloor}=1+2\binom{p-3}{\lfloor\frac{p-3}{2}\rfloor}$.

Let $(|A|,|B|)=(1,2)$. If $A\subseteq B$, we may assume $A=\{x_1\}$ and $B=\{x_1,x_2\}$. We have $V_2=V_2^2\cup V_2^3\cup V_2^{13}\cup V_2^{23}\cup V_2^-$ and $V_3=V_3^+\cup V_3^1\cup V_3^{12}$. We know $x_1\rightarrow V_3$, since $\diam(V_2,x_1)\leqslant 2$, then we have $V_2\rightarrow x_1$, this means $V_2^{13}=\emptyset$, which is a contradiction. 

So we have $A\cap B=\emptyset$, and we may suppose $A=\{x_1\}$ and $B=\{x_2,x_3\}$. Then $V_2=V_2^2\cup V_2^3\cup V_2^{12}\cup V_2^{13}\cup V_2^-$ and $V_3=V_3^+\cup V_3^1\cup V_3^{23}$. By Lemma~\ref{lem-diam2-ViA-VjB}, we have $V_1\rightarrow V_3^+\rightarrow V_2$, $V_3\rightarrow V_2^-\rightarrow V_1$, $V_2^{12}\cup V_2^{13}\rightarrow V_3^1$ and $V_3^{23}\rightarrow V_2^2\cup V_2^3$. So $|V_3^+|\leqslant 1$, $(V_2^2\cup V_2^3,V_3^1)$ and $(V_2^{12}\cup V_2^{13},V_3^{23})$ are mixed pairs. Hence $q\leqslant 1+\binom{p-3}{\lfloor\frac{p-3}{2}\rfloor}+\binom{p-3}{\lfloor\frac{p-3}{2}\rfloor}=1+2\binom{p-3}{\lfloor\frac{p-3}{2}\rfloor}$.
\end{proof}

\begin{lemma}\label{lem-H=5-V2AV2BV2C}
Let $5\leqslant p\leqslant q$, and let $D$ be a strong orientation of $\K(3,p,q)$ with diameter two. If there are exactly three empty sets $V_2^A=V_2^B=V_2^C=\emptyset$ in $\mathbb{H}$ for three nonempty and proper subsets $A$, $B$, $C$ of $[3]$, then $q\leqslant \binom{p-3}{\lfloor\frac{p-3}{2}\rfloor}+\binom{p-2}{\lfloor\frac{p-2}{2}\rfloor}$.
\end{lemma}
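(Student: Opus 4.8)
The plan is to pin down the global structure first, then run a short case analysis on which three of the six ``middle'' sets $V_2^1,V_2^2,V_2^3,V_2^{12},V_2^{13},V_2^{23}$ are nonempty. Since $V_2^+\neq\emptyset$ and $V_2^-\neq\emptyset$, Lemma~\ref{lem-diam2-ViA-VjB}(3) gives $|V_2^+|=|V_2^-|=1$, so the three nonempty middle sets have sizes summing to $p-2$, and Lemma~\ref{lem-diam2-ViA-VjB}(1) forces $V_3^+=V_3^-=\emptyset$ together with $V_3^S=\emptyset$ for every nonempty $V_2^S$, so $V_3$ is supported on only three of its parts. Using the $\rev$-symmetry (which replaces each $V_2^S$ by $V_2^{[3]\setminus S}$ and swaps $V_2^+\leftrightarrow V_2^-$) and the $S_3$-symmetry permuting $x_1,x_2,x_3$, the unordered triple of nonempty middle sets reduces to one of three shapes: (i) the three singletons $\{V_2^1,V_2^2,V_2^3\}$; (ii) two singletons and their union, say $\{V_2^1,V_2^2,V_2^{12}\}$; (iii) two singletons and a pair containing exactly one of them, say $\{V_2^1,V_2^2,V_2^{13}\}$.

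I would dispose of shape (ii) first: then $V_3=V_3^3\cup V_3^{13}\cup V_3^{23}$, every index here contains $3$, so $x_3\rightarrow V_3$; picking $y\in V_2^+$ we have $N_D^+(y)\subseteq V_3$ and $N_D^-(x_3)\subseteq V_2$ (nothing in $V_3$ points to $x_3$, and $V_1$ is independent), hence $N_D^+(y)\cap N_D^-(x_3)=\emptyset$, and since $x_3\rightarrow y$, Lemma~\ref{lem-neighbour-diamter-2} forces $\partial_D(y,x_3)\geq 3$, contradicting $\diam(D)=2$ (the complementary subcase is the same statement in $\rev(D)$). For shape (i) one has $V_3=V_3^{12}\cup V_3^{13}\cup V_3^{23}$, and Lemma~\ref{lem-diam2-ViA-VjB}(5) yields $V_3^{12}\rightarrow V_2^1\cup V_2^2$, $V_3^{13}\rightarrow V_2^1\cup V_2^3$, $V_3^{23}\rightarrow V_2^2\cup V_2^3$, so $(V_2^3,V_3^{12})$, $(V_2^2,V_3^{13})$, $(V_2^1,V_3^{23})$ are mixed pairs; since the three sizes sum to $p-2$, the superadditivity of $n\mapsto\binom{n}{\lfloor\frac{n}{2}\rfloor}$ (noted just before Theorem~\ref{thm-H=3}) gives $q\leq\binom{p-2}{\lfloor\frac{p-2}{2}\rfloor}$, well below the claimed bound.

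Shape (iii) is the extremal one. Take $V_2=V_2^+\cup V_2^1\cup V_2^2\cup V_2^{13}\cup V_2^-$, so $V_3=V_3^3\cup V_3^{12}\cup V_3^{23}$, and write $p_1=|V_2^1|$, $p_2=|V_2^2|$, $p_3=|V_2^{13}|$, so $p_1,p_2,p_3\geq 1$ and $p_1+p_2+p_3=p-2$. Parts (3) and (5) of Lemma~\ref{lem-diam2-ViA-VjB} determine every orientation except the ``independent'' ones; reading off which $V_2$-parts are comparable with each $V_3$-part, the mixed side of $V_3^3$ is $V_2^1\cup V_2^2$, that of $V_3^{12}$ is $V_2^{13}$, and that of $V_3^{23}$ is $V_2^1\cup V_2^{13}$. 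Thus $(V_2^1\cup V_2^2,V_3^3)$, $(V_2^{13},V_3^{12})$ and $(V_2^1\cup V_2^{13},V_3^{23})$ are mixed pairs, so
\[
q\leq \binom{p_1+p_2}{\lfloor\frac{p_1+p_2}{2}\rfloor}+\binom{p_3}{\lfloor\frac{p_3}{2}\rfloor}+\binom{p_1+p_3}{\lfloor\frac{p_1+p_3}{2}\rfloor}.
\]
Applying superadditivity to the first two terms gives $\binom{p_1+p_2}{\lfloor\frac{p_1+p_2}{2}\rfloor}+\binom{p_3}{\lfloor\frac{p_3}{2}\rfloor}\leq\binom{p-2}{\lfloor\frac{p-2}{2}\rfloor}$, and monotonicity gives $\binom{p_1+p_3}{\lfloor\frac{p_1+p_3}{2}\rfloor}\leq\binom{p-3}{\lfloor\frac{p-3}{2}\rfloor}$ since $p_1+p_3\leq p-3$; adding these yields $q\leq\binom{p-3}{\lfloor\frac{p-3}{2}\rfloor}+\binom{p-2}{\lfloor\frac{p-2}{2}\rfloor}$, as required. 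The delicate part is purely combinatorial: carrying out the symmetry reduction to exactly these three shapes, noticing that shape (ii) is vacuous, and correctly reading off the mixed sides in shape (iii)---a single mislabeled orientation would change the combination of $p_i$'s and break the final estimate. Once the mixed pairs are right, the arithmetic is routine given the convexity of $\binom{n}{\lfloor\frac{n}{2}\rfloor}$ already in hand.
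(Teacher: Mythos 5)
Your proof is correct and follows essentially the same route as the paper's: reduce by the $S_3$- and $\rev$-symmetries to three configurations, kill the ``two singletons plus their union'' configuration by a forced-orientation contradiction, and bound the other two via mixed pairs and the superadditivity of $n\mapsto\binom{n}{\lfloor n/2\rfloor}$, with the extremal configuration yielding exactly $\binom{p-2}{\lfloor\frac{p-2}{2}\rfloor}+\binom{p-3}{\lfloor\frac{p-3}{2}\rfloor}$. The only differences are cosmetic: you index the cases by the nonempty middle sets where the paper indexes by the empty ones (the two are exchanged by complementation), and your contradiction in the vacuous case goes through $V_2^+$ and Lemma~\ref{lem-neighbour-diamter-2} rather than through $\diam(x_3,V_2)\leqslant 2$ forcing $V_2^-=\emptyset$.
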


\begin{proof}
The situation of $V_2^A=V_2^B=V_2^C=\emptyset$ in $D$ is the same as $V_2^{\overline{A}}=V_2^{\overline{B}}=V_2^{\overline{C}}=\emptyset$ in $\rev(D)$ where $\overline{A}=[3]\setminus A$, $\overline{B}=[3]\setminus B$ and $\overline{C}=[3]\setminus C$. We may suppose $|A|\leqslant |B|\leqslant |C|$, and so we only need to consider $(|A|,|B|,|C|)=(1,1,1)$ or $(1,1,2)$. 

Let $(|A|,|B|,|C|)=(1,1,1)$. We may assume $A=\{x_1\}$, $B=\{x_2\}$, $C=\{x_3\}$. Then $V_2=V_2^+\cup V_2^{12}\cup V_2^{13}\cup V_2^{23}\cup V_2^-$ and $V_3=V_3^1\cup V_3^2\cup V_3^3$. By Lemma~\ref{lem-diam2-ViA-VjB}, we have $V_1\rightarrow V_2^+\rightarrow V_3$, $V_3\rightarrow V_2^-\rightarrow V_1$, $V_2^{12}\rightarrow V_3^1\cup V_3^2$, $V_2^{13}\rightarrow V_3^1\cup V_3^3$ and $V_2^{23}\rightarrow V_3^2\cup V_3^3$. So $(V_2^{23},V_3^1)$, $(V_2^{13},V_3^2)$ and $(V_2^{12},V_3^3)$ are mixed pairs. Let $p_1=|V_2^{12}|$, $p_2=|V_2^{13}|$, $p_3=|V_2^{23}|$. Then $1\leqslant p_1,p_2,p_3\leqslant p-4$ and $p_1+p_2+p_3=p-2$. Hence $q\leqslant 1+\binom{p_3}{\lfloor\frac{p_3}{2}\rfloor}+\binom{p_2}{\lfloor\frac{p_2}{2}\rfloor}+\binom{p_1}{\lfloor\frac{p_1}{2}\rfloor}\leqslant \binom{p_1+p_2+p_3}{\lfloor\frac{p_1+p_2+p_3}{2}\rfloor}=\binom{p-2}{\lfloor\frac{p-2}{2}\rfloor}$. 

Let $(|A|,|B|,|C|)=(1,1,2)$. If $A\cup B=C$, we may assume $A=\{x_1\}$, $B=\{x_2\}$ and $C=\{x_1,x_2\}$, then $V_2=V_2^+\cup V_2^3\cup V_2^{13}\cup V_2^{23}\cup V_2^-$ and $V_3=V_3^1\cup V_3^2\cup V_3^{12}$. We know $V_3\rightarrow x_3$, since $\diam(x_3,V_2)\leqslant 2$, then we get $x_3\rightarrow V_2$, this means $V_2^-=\emptyset$, which is a contradiction. So we may suppose $A\subseteq C$ and $B\cup C=V_1$, and let $A=\{x_1\}$, $B=\{x_3\}$ and $C=\{x_1,x_2\}$. Then $V_2=V_2^+\cup V_2^2\cup V_2^{13}\cup V_2^{23}\cup V_2^-$ and $V_3=V_3^1\cup V_3^{12}\cup V_3^3$. By Lemma~\ref{lem-diam2-ViA-VjB}, we have $V_1\rightarrow V_2^+\rightarrow V_3$, $V_3\rightarrow V_2^-\rightarrow V_1$, $V_3^{12}\rightarrow V_2^2$, $V_2^{13}\rightarrow V_3^1\cup V_3^3$ and $V_2^{23}\rightarrow V_3^3$. So $(V_2^2\cup V_2^{23},V_3^1)$, $(V_2^{13}\cup V_2^{23},V_3^{12})$ and $(V_2^2,V_3^{3})$ are mixed pairs. Let $p_1=|V_2^2|$, $p_2=|V_2^{13}|$, $p_3=|V_2^{23}|$. Then $1\leqslant p_1,p_2,p_3\leqslant p-4$ and $p_1+p_2+p_3=p-2$. Hence $q\leqslant \binom{p_1+p_3}{\lfloor\frac{p_1+p_3}{2}\rfloor}+\binom{p_2+p_3}{\lfloor\frac{p_2+p_3}{2}\rfloor}+\binom{p_1}{\lfloor\frac{p_1}{2}\rfloor}\leqslant \binom{p_1+p_3}{\lfloor\frac{p_1+p_3}{2}\rfloor}+\binom{p_1+p_2+p_3}{\lfloor\frac{p_1+p_2+p_3}{2}\rfloor}\leqslant \binom{p-3}{\lfloor\frac{p-3}{2}\rfloor}+\binom{p-2}{\lfloor\frac{p-2}{2}\rfloor}$. 
\end{proof}

By the above lemmas in this subsection, we have the following result. 

\begin{theorem}\label{thm-H=5}
Let $5\leqslant p\leqslant q$, and let $D$ be a strong orientation of $\K(3,p,q)$ with diameter two. If $|\mathbb{H}|=5$, then 
\begin{center}
  $q\leqslant \binom{p-3}{\lfloor\frac{p-3}{2}\rfloor}+\binom{p-2}{\lfloor\frac{p-2}{2}\rfloor}$.
\end{center}
\end{theorem}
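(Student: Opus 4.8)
The plan is to assemble Theorem~\ref{thm-H=5} from the three lemmas of this subsection together with an easy comparison of their numerical conclusions. Since $|\mathbb{H}|=5$, exactly three of the eight sets in $\mathbb{H}=\{V_2^+, V_2^1, V_2^2, V_2^3, V_2^{12}, V_2^{13}, V_2^{23}, V_2^-\}$ are empty, and I would split into cases according to how many of the two distinguished sets $V_2^+$ and $V_2^-$ are among these three. If both $V_2^+$ and $V_2^-$ are empty, the remaining empty set is $V_2^A$ for some nonempty proper $A\subseteq[3]$, so Lemma~\ref{lem-H=5-V2+V2AV2-} applies and $q\leqslant 2+\binom{p-2}{\lfloor\frac{p-2}{2}\rfloor}$. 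If exactly one of $V_2^+,V_2^-$ is empty, then after possibly passing to $\rev(D)$ (which, as noted at the start of the subsection, swaps $V_2^+$ with $V_2^-$ and replaces each superscript $A$ by $[3]\setminus A$) we may assume $V_2^+$ is empty; the empty sets are then $V_2^+,V_2^A,V_2^B$ with $A,B$ nonempty proper, and Lemma~\ref{lem-H=5-V2+V2AV2B} gives $q\leqslant 1+2\binom{p-3}{\lfloor\frac{p-3}{2}\rfloor}$. If neither $V_2^+$ nor $V_2^-$ is empty, all three empty sets are of the form $V_2^A,V_2^B,V_2^C$ with $A,B,C$ nonempty proper, and Lemma~\ref{lem-H=5-V2AV2BV2C} gives $q\leqslant \binom{p-3}{\lfloor\frac{p-3}{2}\rfloor}+\binom{p-2}{\lfloor\frac{p-2}{2}\rfloor}$. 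These are precisely the three cases listed before the lemmas.

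It then remains to check that, for $p\geqslant 5$, each of the three bounds is at most $\binom{p-3}{\lfloor\frac{p-3}{2}\rfloor}+\binom{p-2}{\lfloor\frac{p-2}{2}\rfloor}$. For the first bound this says $\binom{p-3}{\lfloor\frac{p-3}{2}\rfloor}\geqslant 2$, which holds because $p-3\geqslant 2$ and $\binom{n}{\lfloor n/2\rfloor}$ is nondecreasing with $\binom{2}{1}=2$. For the second bound it says $1+\binom{p-3}{\lfloor\frac{p-3}{2}\rfloor}\leqslant\binom{p-2}{\lfloor\frac{p-2}{2}\rfloor}$, i.e. $\binom{p-2}{\lfloor\frac{p-2}{2}\rfloor}-\binom{p-3}{\lfloor\frac{p-3}{2}\rfloor}\geqslant 1$; this follows from the monotonicity of the differences of central binomial coefficients already recorded in this section, namely $\binom{m+1}{\lfloor\frac{m+1}{2}\rfloor}-\binom{m}{\lfloor\frac{m}{2}\rfloor}\leqslant\binom{n+1}{\lfloor\frac{n+1}{2}\rfloor}-\binom{n}{\lfloor\frac{n}{2}\rfloor}$ for $1\leqslant m\leqslant n$, applied with $m=2\leqslant n=p-3$ and using $\binom{3}{1}-\binom{2}{1}=1$. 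The third bound is already the target. Taking the maximum over the three cases yields $q\leqslant\binom{p-3}{\lfloor\frac{p-3}{2}\rfloor}+\binom{p-2}{\lfloor\frac{p-2}{2}\rfloor}$.

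I do not expect any real obstacle here: the substance of the argument lives in the three lemmas, and the theorem is pure bookkeeping. The only points needing a little care are the $\rev(D)$ reduction that collapses the ``exactly one of $V_2^+,V_2^-$ empty'' situation onto a single lemma, and the verification that the arithmetic comparisons above remain valid down to the boundary value $p=5$, at which the second comparison is in fact an equality (and hence the bound $\binom{p-3}{\lfloor\frac{p-3}{2}\rfloor}+\binom{p-2}{\lfloor\frac{p-2}{2}\rfloor}$ is genuinely the right envelope of the three cases).
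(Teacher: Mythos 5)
Your proposal is correct and matches the paper's own argument, which proves Theorem~\ref{thm-H=5} simply by combining Lemmas~\ref{lem-H=5-V2+V2AV2-}, \ref{lem-H=5-V2+V2AV2B} and \ref{lem-H=5-V2AV2BV2C} and taking the largest of the three bounds; your explicit verification of the two numerical comparisons (both of which become equalities at $p=5$) only makes explicit what the paper leaves implicit.
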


\subsection{There are exactly six nonempty sets in $\mathbb{H}$}\label{subsec-H=6}

Suppose there are exactly six nonempty sets in $\mathbb{H}$, then we have $p\geqslant 6$. Equivalently, there are exactly two empty sets in $\mathbb{H}$. The case $V_2^-=V_2^A=\emptyset$ in $D$ is the same as $V_2^+=V_2^{\overline{A}}=\emptyset$ in $\rev(D)$ where $\overline{A}=[3]\setminus A$. 

Let $6\leqslant p\leqslant q$, and let $D$ be a strong orientation of $\K(3,p,q)$. If there are exactly two empty sets $V_2^+=V_2^-=\emptyset$ in $\mathbb{H}$, then $V_3=V_3^+\cup V_3^-$, and so $q=|V_3|\leqslant 2$, which is a contradiction. If there are exactly two empty sets $V_2^+=V_2^A=\emptyset$ in $\mathbb{H}$ for some nonempty and proper subset $A$ of $[3]$, then $V_3=V_3^+\cup V_3^A$. Let $B=[3]\setminus A$, $V_{11}=\{x_i\mid i\in A\}$ and $V_{12}=\{x_j\mid j\in B\}$. Then $V_1\rightarrow V_3^+\rightarrow V_2$ and $V_{11}\rightarrow V_3^A\rightarrow V_{12}$. We have $V_{11}\rightarrow V_3$. Since $\diam(V_2,V_{11})\leqslant 2$, then we have $V_2\rightarrow V_{11}$, this means there are at most four nonempty sets in $\mathbb{H}$, which is a contradiction. Hence we only need to consider the following case.

\begin{lemma}\label{lem-H=6-V2AV2B}
Let $6\leqslant p\leqslant q$, and let $D$ be a strong orientation of $\K(3,p,q)$ with diameter two. If there are exactly two empty sets $V_2^A=V_2^B=\emptyset$ in $\mathbb{H}$ for some nonempty and proper subsets $A$ and $B$ of $[3]$, then $q\leqslant \binom{p-2}{\lfloor\frac{p-2}{2}\rfloor}$.
\end{lemma}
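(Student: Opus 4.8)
The plan is to extract a rigid skeleton from the diameter two hypothesis and then argue that, up to the symmetries already exploited in the paper, only one configuration of $\{A,B\}$ can actually occur. Since the two empty members of $\mathbb{H}$ are indexed by nonempty proper subsets $A,B$ of $[3]$, neither $V_2^{+}$ nor $V_2^{-}$ is empty; so Lemma~\ref{lem-diam2-ViA-VjB}(3), together with the definitions of the classes, gives $V_2^{+}=\{y_0\}$ and $V_2^{-}=\{y_\infty\}$ with $N_D^{-}(y_0)=V_1$, $N_D^{+}(y_0)=V_3$, $N_D^{+}(y_\infty)=V_1$ and $N_D^{-}(y_\infty)=V_3$. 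In particular the four remaining members of $\mathbb{H}$ are all nonempty and their union has size $p-2$. By Lemma~\ref{lem-diam2-ViA-VjB}(1), $V_3^{C}\neq\emptyset$ forces $V_2^{C}=\emptyset$, so $V_3=V_3^{A}\cup V_3^{B}$.

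Next I would determine $A$ and $B$. For each $i\in[3]$ the vertices $x_i$ and $y_\infty$ are adjacent but oriented $y_\infty\rightarrow x_i$, so $\partial_D(x_i,y_\infty)\geqslant 2$; any directed $x_i$--$y_\infty$ path of length two must use a vertex of $V_3$ (the only common neighbour class of $x_i$ and $y_\infty$), and such a vertex lies in a part of $V_3$ whose index contains $i$. Since $V_3^{A}=\emptyset$ would force every $i$ into $B$, contradicting properness, both $V_3^{A}$ and $V_3^{B}$ are nonempty; running the observation for all $i$ gives $A\cup B=[3]$. Dually, $\partial_D(y_0,x_i)\geqslant 2$ together with $N_D^{+}(y_0)=V_3$ forces, for each $i$, a nonempty part of $V_3$ whose index avoids $i$, whence $A\cap B=\emptyset$. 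So $\{A,B\}$ partitions $[3]$, and up to relabelling $A=\{1\}$, $B=\{2,3\}$; thus $V_3=V_3^{1}\cup V_3^{23}$ and the four nonempty middle parts of $V_2$ are $V_2^{2},V_2^{3},V_2^{12},V_2^{13}$.

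Finally I would bound $|V_3^{1}|$ and $|V_3^{23}|$ by Sperner's Lemma. By Lemma~\ref{lem-diam2-ViA-VjB}(5), $V_2^{12}\cup V_2^{13}\rightarrow V_3^{1}$, so for $z\in V_3^{1}$ we have $N_D^{+}(z)\cap V_2=\{y_\infty\}\cup\bigl(N_D^{+}(z)\cap(V_2^{2}\cup V_2^{3})\bigr)$. For distinct $z,z'\in V_3^{1}$ a directed $z$--$z'$ path of length two cannot pass through $V_1$ (the arc directions forced at each $x_i$ are incompatible) nor through $y_0$ or $y_\infty$, so it passes through $V_2^{2}\cup V_2^{3}$; hence $z\mapsto N_D^{+}(z)\cap(V_2^{2}\cup V_2^{3})$ is an injection of $V_3^{1}$ onto an independent collection of $V_2^{2}\cup V_2^{3}$, and Sperner's Lemma gives $|V_3^{1}|\leqslant\binom{|V_2^{2}|+|V_2^{3}|}{\lfloor(|V_2^{2}|+|V_2^{3}|)/2\rfloor}$. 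Symmetrically $V_3^{23}\rightarrow V_2^{2}\cup V_2^{3}$ (Lemma~\ref{lem-diam2-ViA-VjB}(5)), a length‑two path between two vertices of $V_3^{23}$ must run through $V_2^{12}\cup V_2^{13}$, and $|V_3^{23}|\leqslant\binom{|V_2^{12}|+|V_2^{13}|}{\lfloor(|V_2^{12}|+|V_2^{13}|)/2\rfloor}$. Adding these and using the elementary inequality $\binom{m}{\lfloor m/2\rfloor}+\binom{n}{\lfloor n/2\rfloor}\leqslant\binom{m+n}{\lfloor(m+n)/2\rfloor}$ for positive integers $m,n$ yields $q=|V_3^{1}|+|V_3^{23}|\leqslant\binom{p-2}{\lfloor(p-2)/2\rfloor}$.

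The step I expect to be the main obstacle is the second paragraph: the $y_0$-- and $y_\infty$--distance conditions are precisely what collapse the a priori several subcases (two singletons empty, a nested pair empty, two intersecting $2$‑sets empty, $\ldots$) down to the single partition configuration. If instead one attacked, say, $V_2^{1}=V_2^{2}=\emptyset$ directly, the Sperner bounds obtained for $V_3^{1}$ and $V_3^{2}$ live on overlapping ground sets (both can use $V_2^{3}$) and do not visibly sum to $\binom{p-2}{\lfloor(p-2)/2\rfloor}$; the point is that this configuration simply cannot occur, because then there is no directed path of length at most two from $x_3$ to $y_\infty$.
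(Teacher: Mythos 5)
Your proposal is correct, and its second half (two mixed pairs on the complementary ground sets $V_2^{12}\cup V_2^{13}$ and $V_2^{2}\cup V_2^{3}$, summed via $\binom{m}{\lfloor m/2\rfloor}+\binom{n}{\lfloor n/2\rfloor}\leqslant\binom{m+n}{\lfloor (m+n)/2\rfloor}$) is exactly the paper's computation. Where you genuinely diverge is in eliminating the other configurations of $\{A,B\}$. The paper first reduces to $|A|\leqslant|B|$ via the reversal symmetry $D\mapsto\rev(D)$ and then kills the cases $(|A|,|B|)=(1,1)$ and the nested $(1,2)$ case by noting $V_3\rightarrow x_3$ (resp.\ $V_3\rightarrow x_3$ with $x_3\notin A\cup B$), which forces $x_3\rightarrow V_2$ and hence empties $V_2^{-}$ and one more class, contradicting $|\mathbb{H}|=6$. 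You instead exploit that $V_2^{+}=\{y_0\}$ and $V_2^{-}=\{y_\infty\}$ must both be present and use the distance-two conditions $\partial_D(x_i,y_\infty)\leqslant 2$ and $\partial_D(y_0,x_i)\leqslant 2$ to conclude directly that $V_3^{A},V_3^{B}\neq\emptyset$, $A\cup B=[3]$ and $A\cap B=\emptyset$. This is a clean uniform argument: it dispenses with the reversal symmetry, handles the $(2,2)$ case without a separate reduction, and makes transparent why only the partition configuration survives; the paper's version is slightly more mechanical but fits the case-by-case template used throughout Section 4. Both arguments are sound and yield the same bound.
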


\begin{proof}
The case $V_2^A=V_2^B=\emptyset$ in $D$ is the same as $V_2^{\overline{A}}=V_2^{\overline{B}}=\emptyset$ in $\rev(D)$ where $\overline{A}=[3]\setminus A$ and $\overline{B}=[3]\setminus B$. We may suppose $|A|\leqslant |B|$, and so we only need to consider $(|A|,|B|)=(1,1)$ or $(1,2)$. 

If $(|A|,|B|)=(1,1)$, we may assume $A=\{x_1\}$, $B=\{x_2\}$. Then $V_2=V_2^+\cup V_2^{12}\cup V_2^{13}\cup V_2^{23}\cup V_2^3\cup V_2^-$ and $V_3=V_3^1\cup V_3^2$. We know $V_3\rightarrow x_3$, since $\diam(x_3,V_2)\leqslant 2$, then we have $x_3\rightarrow V_2$. This means $V_2^{12}=V_2^-=\emptyset$, which is a contradiction. 

Let $(|A|,|B|)=(1,2)$. If $A\subseteq B$, we may assume $A=\{x_1\}$, $B=\{x_1,x_2\}$. Then $V_2=V_2^+\cup V_2^{13}\cup V_2^2\cup V_2^{23}\cup V_2^3\cup V_2^-$ and $V_3=V_3^1\cup V_3^{12}$. We know $V_3\rightarrow x_3$, since $\diam(x_3,V_2)\leqslant 2$, then we have $x_3\rightarrow V_2$. This means $V_2^2=V_2^-=\emptyset$, which is a contradiction. 

Suppose $A\cap B=\emptyset$, we may assume $A=\{x_1\}$ and $B=\{x_2,x_3\}$. Then $V_2=V_2^+\cup V_2^{12}\cup V_2^{13}\cup V_2^2\cup V_2^3\cup V_2^-$ and $V_3=V_3^1\cup V_3^{23}$. By Lemma~\ref{lem-diam2-ViA-VjB}, we have $V_1\rightarrow V_2^+\rightarrow V_3$, $V_3\rightarrow V_2^-\rightarrow V_1$, $V_3^{23}\rightarrow V_2^2\cup V_2^3$, $V_2^{12}\cup V_2^{13}\rightarrow V_3^1$. So $(V_2^2\cup V_2^3,V_3^1)$, $(V_2^{12}\cup V_2^{13},V_3^{23})$ are mixed pairs. Let $p_1=|V_2^{12}\cup V_2^{13}|$, $p_2=|V_2^2\cup V_2^3|$. Then $2\leqslant p_1,p_2\leqslant p-4$ and $p_1+p_2=p-2$. Hence $q\leqslant \binom{p_2}{\lfloor\frac{p_2}{2}\rfloor}+\binom{p_1}{\lfloor\frac{p_1}{2}\rfloor}\leqslant \binom{p_1+p_2}{\lfloor\frac{p_1+p_2}{2}\rfloor}=\binom{p-2}{\lfloor\frac{p-2}{2}\rfloor}$. 
\end{proof}

By the above discussion and lemma in this subsection, we have the following result. 

\begin{theorem}\label{thm-H=6}
Let $6\leqslant p\leqslant q$, and let $D$ be a strong orientation of $\K(3,p,q)$ with diameter two. If $|\mathbb{H}|=6$, then 
\begin{center}
  $q\leqslant \binom{p-2}{\lfloor\frac{p-2}{2}\rfloor}$.
\end{center}
\end{theorem}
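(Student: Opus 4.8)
The plan is to read Theorem~\ref{thm-H=6} off the case analysis carried out immediately before Lemma~\ref{lem-H=6-V2AV2B}, together with that lemma. Since $|\mathbb{H}|=6$, exactly two of the eight sets $V_2^+,V_2^1,V_2^2,V_2^3,V_2^{12},V_2^{13},V_2^{23},V_2^-$ are empty, so I would split according to which pair is empty, exploiting the reversal symmetry $D\leftrightarrow\rev(D)$ (which interchanges $V_2^A$ with $V_2^{[3]\setminus A}$, in particular $V_2^+$ with $V_2^-$) to reduce the number of cases.

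First, if $V_2^+=V_2^-=\emptyset$, then $V_3\subseteq V_3^+\cup V_3^-$ forces $q=|V_3|\leqslant 2$, contradicting $q\geqslant p\geqslant 6$. Next, if $V_2^+=V_2^A=\emptyset$ for a nonempty proper $A\subseteq[3]$ (the case $V_2^-=V_2^A=\emptyset$ being this one in $\rev(D)$), then $V_3=V_3^+\cup V_3^A$, so with $V_{11}=\{x_i\mid i\in A\}$ we get $V_1\rightarrow V_3^+$ and $V_{11}\rightarrow V_3^A$, hence $V_{11}\rightarrow V_3$; since $\diam(V_2,V_{11})\leqslant 2$ this forces $V_2\rightarrow V_{11}$, i.e. $y\rightarrow x_i$ for every $y\in V_2$ and every $i\in A$. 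Consequently $V_2^C=\emptyset$ for every $C\subseteq[3]$ with $C\cap A\neq\emptyset$, and since $A\neq\emptyset$ this already empties at least four of the eight classes (for $A=\{i\}$ the four sets $V_2^{\{i\}},V_2^{\{i,j\}},V_2^{\{i,k\}},V_2^{[3]}$), contradicting $|\mathbb{H}|=6$. The only remaining possibility is that the two empty sets are $V_2^A$ and $V_2^B$ with $A,B$ nonempty proper subsets of $[3]$, which is precisely the hypothesis of Lemma~\ref{lem-H=6-V2AV2B}; that lemma gives $q\leqslant\binom{p-2}{\lfloor\frac{p-2}{2}\rfloor}$.

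Combining the cases, every situation consistent with $|\mathbb{H}|=6$ and $\diam(D)=2$ yields $q\leqslant\binom{p-2}{\lfloor\frac{p-2}{2}\rfloor}$, which is the claim. There is no real obstacle: the only quantitative estimate is the one supplied by Lemma~\ref{lem-H=6-V2AV2B}, and the rest is bookkeeping. The single point that needs a little care is the second case, namely checking that $V_2\rightarrow x_i$ for one index $i$ forces four of the eight classes to vanish and is therefore incompatible with having exactly six nonempty classes.
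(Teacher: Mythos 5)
Your proposal is correct and follows essentially the same route as the paper: dispose of the pairs $(V_2^+,V_2^-)$ and $(V_2^+,V_2^A)$ (with $(V_2^-,V_2^A)$ handled via $\rev(D)$) by the forced orientations $V_3=V_3^+\cup V_3^-$ and $V_2\rightarrow V_{11}$ respectively, and then invoke Lemma~\ref{lem-H=6-V2AV2B} for the remaining case $V_2^A=V_2^B=\emptyset$. The counting step you flag as needing care (that $V_2\rightarrow V_{11}$ kills at least four of the eight classes) is exactly the paper's argument as well.
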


\subsection{There are exactly seven nonempty sets in $\mathbb{H}$}\label{subsec-H=7}

Suppose there are exactly seven nonempty sets in $\mathbb{H}$, then we have $p\geqslant 7$. Equivalently, there is exactly one empty set in $\mathbb{H}$. Let $7\leqslant p\leqslant q$, and let $D$ be a strong orientation of $\K(3,p,q)$. 

If there is exactly one empty set $V_2^+=\emptyset$ (or, $V_2^-=\emptyset$) in $\mathbb{H}$, then $V_3=V_3^+$ (or, $V_3=V_3^-$), and so $q=|V_3|\leqslant 1$, which is a contradiction. 

If there is exactly one empty set $V_2^A=\emptyset$ in $\mathbb{H}$ for some nonempty and proper subset $A$ of $[3]$, then $V_3=V_3^A$. Let $B=[3]\setminus A$, $V_{11}=\{x_i\mid i\in A\}$ and $V_{12}=\{x_j\mid j\in B\}$. Then $V_{11}\rightarrow V_3^A\rightarrow V_{12}$, i.e., $V_{11}\rightarrow V_3\rightarrow V_{12}$. Since $\diam(V_2,V_{11})\leqslant 2$ and $\diam(V_{12},V_2)\leqslant 2$, then we have $V_{12}\rightarrow V_2\rightarrow V_{11}$, this means $V_2=V_2^B$, i.e., there is exactly one nonempty set in $\mathbb{H}$, which is a contradiction.

By the above discussion in this subsection, we have the following result. 

\begin{theorem}\label{thm-H=7}
Let $7\leqslant p\leqslant q$, and let $D$ be a strong orientation of $\K(3,p,q)$. Then $|\mathbb{H}|\neq 7$.
\end{theorem}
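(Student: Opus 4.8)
The plan is to show that no strong orientation $D$ of $\K(3,p,q)$ with $7\leqslant p\leqslant q$ can have exactly one empty set in $\mathbb{H}$, by arguing case by case on which of the eight sets $V_2^+,V_2^1,V_2^2,V_2^3,V_2^{12},V_2^{13},V_2^{23},V_2^-$ is the empty one, and showing that in every case the structural constraints already assembled in Section~\ref{sec-big-part} force a second empty set (a contradiction with $|\mathbb{H}|=7$). Since only one set is empty, at least one of $V_2^+,V_2^-$ is nonempty, and by applying Lemma~\ref{lem-diam2-ViA-VjB}(3) one sees $|V_2^+|\leqslant 1$ and $|V_2^-|\leqslant 1$, so in fact the bulk of $V_2$ (all but at most two vertices) is spread over the six "mixed-type" sets $V_2^1,V_2^2,V_2^3,V_2^{12},V_2^{13},V_2^{23}$; keeping $p\geqslant 7$ means several of these are large.

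First I would observe that the only assumption actually needed is that $D$ is a strong orientation with diameter two; I do not even need the bound on $q$. The argument mirrors the reasoning already used in Theorem~\ref{thm-H=6}: I split into three subcases. In the subcase $V_2^+=\emptyset$ (symmetrically $V_2^-=\emptyset$ via $\rev(D)$), every vertex of $V_2$ lies in $N_D^+(x_i)$ for at most two indices $i$, hence $N_D^-(x_i)$ is nonempty for each $i$; but then $V_3^+ = V_3\cap N_D^+(x_1)\cap N_D^+(x_2)\cap N_D^+(x_3)$ must carry information about how $V_3$ is oriented with respect to $V_1$, and using $\diam(V_3,x_i)\leqslant 2$ together with Lemma~\ref{lem-diam2-ViA-VjB} forces some $V_3^B$ to be empty, which in turn (since $\{V_3^B\}$ partitions $V_3$ and $q\geqslant p\geqslant 7$) pushes back to kill a second set of $\mathbb{H}$. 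In the subcase $V_2^A=\emptyset$ for a proper nonempty $A$ with $V_2^+,V_2^-$ both nonempty, I use that $V_2^+\to V_3\to V_2^-$ already pins most of the orientation between $V_2$ and $V_3$, and then the "$V_2^A$ empty" hypothesis combined with $\diam(x_j,V_2)\leqslant 2$ or $\diam(V_2,x_i)\leqslant 2$ for a suitably chosen index forces a second $V_2^B$ to vanish, exactly as in the reductions at the start of Sections~\ref{subsec-H=6} and \ref{subsec-H=5}.

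The cleanest way to organize the write-up is: (i) dispense with $V_2^+=\emptyset$ and $V_2^-=\emptyset$ immediately, since then $V_3=V_3^+$ or $V_3=V_3^-$ and $q\leqslant 1$, contradicting $q\geqslant p\geqslant 7$; (ii) for $V_2^A=\emptyset$ with $\emptyset\neq A\neq[3]$, use $\rev(D)$ to reduce to $|A|\leqslant 2$ — actually to a single representative $A=\{x_1\}$ or $A=\{x_1,x_2\}$ by relabelling — and then show $V_3=V_3^A$ must hold, so $V_{11}\to V_3\to V_{12}$ where $V_{11}=\{x_i\mid i\in A\}$, $V_{12}=\{x_j\mid j\in[3]\setminus A\}$; finally $\diam(V_2,V_{11})\leqslant 2$ and $\diam(V_{12},V_2)\leqslant 2$ force $V_{12}\to V_2\to V_{11}$, i.e. $V_2=V_2^{[3]\setminus A}$, so $|\mathbb{H}|=1\neq 7$, a contradiction. (This is precisely the argument sketched in the excerpt right before the theorem statement, so the proof is genuinely short once set up.)

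The main obstacle, and the only place requiring care, is step (ii): one must be sure that $V_2^A=\emptyset$ really does force $V_3=V_3^A$ rather than merely constraining $V_3$. This follows because for any $z\in V_3$ and $y\in V_2$ one of the arcs $y\to z$, $z\to y$ is present, and a diameter-two analysis (Lemma~\ref{lem-neighbour-diamter-2}) of the pairs $(z,x_i)$ together with the emptiness of $V_2^A$ pins down, for each $z\in V_3$, exactly which $x_i$'s dominate $z$; the upshot is that $V_3$ can only meet $N_D^A$. I would present this as a one-paragraph verification, parallel to the $\diam(x_3,V_2)\leqslant 2$ arguments in Lemmas~\ref{lem-H=5-V2+V2AV2B} and \ref{lem-H=6-V2AV2B}, and then the contradiction is immediate. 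No delicate counting is needed — unlike the $|\mathbb{H}|\leqslant 6$ cases, here the obstruction is purely combinatorial/structural.
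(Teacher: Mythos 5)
Your organized version (steps (i) and (ii)) is exactly the paper's argument: $V_2^+=\emptyset$ or $V_2^-=\emptyset$ leaves $V_3=V_3^+$ or $V_3=V_3^-$, whence $q\leqslant 1$; and $V_2^A=\emptyset$ forces $V_3=V_3^A$, so $V_{11}\rightarrow V_3\rightarrow V_{12}$ and the diameter-two condition gives $V_{12}\rightarrow V_2\rightarrow V_{11}$, i.e.\ $V_2=V_2^{[3]\setminus A}$ and $|\mathbb{H}|=1$, a contradiction. One small correction to the step you flag as delicate: $V_3=V_3^A$ is not a consequence of the emptiness of $V_2^A$ but of the non-emptiness of the other seven classes $V_2^B$ together with Lemma~\ref{lem-diam2-ViA-VjB}(1), which forces $V_3^B=\emptyset$ for every $B\neq A$ --- the same one-line observation also handles case (i), and (as you note) the theorem should be read with the diameter-two hypothesis that the surrounding section assumes.
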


\subsection{The main result of the big part}\label{subsec-big-part}

When $p\geqslant 5$, we know
\begin{eqnarray*}
  1+\binom{p-3}{\lfloor\frac{p-3}{2}\rfloor}+\binom{p-1}{\lfloor\frac{p-1}{2}\rfloor}+\binom{p}{\lfloor\frac{p}{2}\rfloor} & \leqslant & \binom{p+1}{\lfloor\frac{p+1}{2}\rfloor}-1, \\
  \max\Bigl\{1+2\binom{p-2}{\lfloor\frac{p-2}{2}\rfloor}+\binom{p-1}{\lfloor\frac{p-1}{2}\rfloor}, 2+2\binom{p-1}{\lfloor\frac{p-1}{2}\rfloor}\Bigr\} & < & \binom{p+1}{\lfloor\frac{p+1}{2}\rfloor}-1, \\
  \binom{p-3}{\lfloor\frac{p-3}{2}\rfloor}+\binom{p-2}{\lfloor\frac{p-2}{2}\rfloor} & < & \binom{p+1}{\lfloor\frac{p+1}{2}\rfloor}-1.
\end{eqnarray*}
By Theorem~\ref{thm-H=1}, Theorem~\ref{thm-H=2}, Theorem~\ref{thm-H=3}, Theorem~\ref{thm-H=4}, Theorem~\ref{thm-H=5}, Theorem~\ref{thm-H=6}, Theorem~\ref{thm-H=7}, we get the main result of the big part.

\begin{theorem}\label{thm-big-part}
Let $5\leqslant p\leqslant q$, and suppose the complete tripartite graph $\K(3,p,q)$ has a strong orientation with diameter two. Then 
\begin{equation*}
  q\leqslant \binom{p+1}{\lfloor\frac{p+1}{2}\rfloor}-1.
\end{equation*}
\end{theorem}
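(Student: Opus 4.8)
The plan is to obtain the global bound by assembling the seven per-case estimates already proved in this section, organized by the number of nonempty parts of the partition $\mathbb{H}$ of $V_2$. The first step is to observe that, although $\mathbb{H}=\{V_2^A\mid A\subseteq[3]\}$ has eight members, they cannot all be nonempty: by Lemma~\ref{lem-diam2-ViA-VjB}(1) each nonempty $V_2^A$ forces $V_3^A=\emptyset$, so if every $V_2^A$ were nonempty then $V_3=\bigcup_{A\subseteq[3]}V_3^A$ would be empty, contradicting $q\geqslant p\geqslant 5$. Hence $1\leqslant|\mathbb{H}|\leqslant 7$, and it suffices to treat the cases $|\mathbb{H}|=1,2,\dots,7$ in turn.

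Next I would dispose of the two extreme cases and bound $q$ in the remaining ones. When $|\mathbb{H}|=1$, Theorem~\ref{thm-H=1} gives $\diam(D)\geqslant 3$, contradicting the hypothesis that $D$ has diameter two, so this case does not occur; similarly $|\mathbb{H}|=7$ is impossible by Theorem~\ref{thm-H=7}. When $|\mathbb{H}|=2$, Theorem~\ref{thm-H=2} already yields the desired inequality $q\leqslant\binom{p+1}{\lfloor\frac{p+1}{2}\rfloor}-1$. When $|\mathbb{H}|\in\{3,4,5\}$, Theorems~\ref{thm-H=3}, \ref{thm-H=4}, \ref{thm-H=5} respectively bound $q$ above by an expression built from binomial coefficients $\binom{n}{\lfloor\frac{n}{2}\rfloor}$ with $n\leqslant p$, and the three displayed inequalities proved immediately before this theorem show that each such expression is at most $\binom{p+1}{\lfloor\frac{p+1}{2}\rfloor}-1$. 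When $|\mathbb{H}|=6$, Theorem~\ref{thm-H=6} gives the still smaller bound $q\leqslant\binom{p-2}{\lfloor\frac{p-2}{2}\rfloor}$, which is well below the target. Collecting all cases proves the theorem.

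The real content of the argument lives in the per-case lemmas already established — in particular the mixed-pair and Sperner bookkeeping and the numerous subcases of $|\mathbb{H}|=3$ and $|\mathbb{H}|=4$ — so at the level of this theorem the only points needing a little care are elementary. One is the verification of the three binomial-coefficient inequalities; the first of them is the tightest (it becomes an equality at $p=5$) and hence genuinely uses the hypothesis $p\geqslant 5$. The other is the remark that, since the nonempty parts of $\mathbb{H}$ partition the $p$-element set $V_2$, one has $|\mathbb{H}|\leqslant p$, so for $p=5$ and $p=6$ the large-$|\mathbb{H}|$ cases are automatically vacuous and the stronger hypotheses $p\geqslant 6$ and $p\geqslant 7$ in Theorems~\ref{thm-H=6} and~\ref{thm-H=7} never actually need to be invoked there. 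I do not expect a genuine obstacle at this stage: the difficulty has been front-loaded into Sections~\ref{subsec-H=1}--\ref{subsec-H=7}, and this final step is pure assembly.
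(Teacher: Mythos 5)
Your proposal is correct and follows essentially the same route as the paper: the result is assembled from Theorems~\ref{thm-H=1}--\ref{thm-H=7} together with the three binomial-coefficient comparisons displayed just before the statement. Your two housekeeping observations --- that all eight parts of $\mathbb{H}$ cannot be nonempty (else $V_3=\emptyset$) and that $|\mathbb{H}|\leqslant p$ renders the large-$|\mathbb{H}|$ cases vacuous for $p=5,6$ --- are correct and in fact make explicit two points the paper only handles implicitly.
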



\section{Small part: find strong orientations with diameter two}\label{sec-small-part}

Let $5\leqslant p\leqslant q\leqslant \binom{p+1}{\lfloor\frac{p+1}{2}\rfloor}-1$, in this small part, we give strong orientations of complete tripartite graph $\K(3,p,q)$ with diameter two.

We first give a $(p,q,\lambda)$-orientation of complete bipartite graph $\K(p,q)$ with diameter three.

\begin{orientation}[$(p,q,\lambda)$-orientation]\label{orientation-p-q-orientation}
Let $4\leqslant p\leqslant q\leqslant \binom{p}{\lambda}$ where $\lambda\geqslant 2$ and $p-\lambda\geqslant 2$, and let the vertex-set of $\K(p,q)$ be $W=W_1\cup W_2$ where the two biparts are 
\begin{center}
  $W_1=\{x_0,x_1,\ldots,x_{p-1}\}$ and $W_2=\{y_0,y_1,\ldots,y_{q-1}\}$. 
\end{center} 
Let $D$ be an orientation of $\K(p,q)$ such that 
\begin{center}
  $W_2(\lambda)\subseteq W_2$,
\end{center} 
and all the neighbour sets 
\begin{center}
  $N_D^+(y)\cap W_1\in \binom{W_1}{\lambda}$ are different for all $y\in W_2$ 
\end{center}
(and so these neighbour sets are independent with each other), where 
\begin{center}
  $W_2(\lambda)=\{y_i^*\mid 0\leqslant i<p\}$
\end{center} 
satisfying for $0\leqslant i<p$ that 
\begin{center}
  $N_D^+(y_i^*)\cap W_1=\{x_j\in W_1\mid j\equiv i+s\pmod p, 0\leqslant s\leqslant \lambda-1\}$,\\ 
  $N_D^-(y_i^*)\cap W_1=\{x_j\in W_1\mid j\equiv i+s\pmod p, \lambda\leqslant s\leqslant p-1\}$.
\end{center} 
Then $\diam(W_1,W_1)\leqslant 2$, $\diam(W_2,W_2)\leqslant 2$, and so $\diam(D)=3$.
\end{orientation}

\begin{proof}
For any two vertices $y,w\in W_2$, we know the neighbour sets $N_D^+(y)\cap W_1$ and $N_D^+(w)\cap W_1$ are independent, i.e., 
\begin{center}
  $(N_D^+(y)\cap W_1)\setminus (N_D^+(w)\cap W_1)\neq\emptyset$,\\ $(N_D^+(w)\cap W_1)\setminus (N_D^+(y)\cap W_1)\neq\emptyset$,  
\end{center} 
and so $\partial_D(y,w)\leqslant 2$ and $\partial_D(w,y)\leqslant 2$. This means $\diam(W_2,W_2)\leqslant 2$.

For $x_i\in W_1$, we have 
\begin{center}
  $N_D^+(x_i)\cap W_2(\lambda)=\{y_j^*\in W_2(\lambda)\mid j\equiv i+s\pmod p, 1\leqslant s\leqslant p-\lambda\}$,~~~~~~\\ 
  $N_D^-(x_i)\cap W_2(\lambda)=\{y_j^*\in W_2(\lambda)\mid j\equiv i+s\pmod p, p-\lambda+1\leqslant s\leqslant p\}$.
\end{center} 
In the following, all the subscripts are read module $p$. Take any $x_i,x_j\in W_1$, we may assume $0\leqslant i<j\leqslant p-1$. If $1\leqslant j-i\leqslant \lambda-1$, then 
\begin{center}
  $0\leqslant j-i-1\leqslant \lambda-2$ and $1\leqslant i-j+\lambda\leqslant \lambda-1$, 
\end{center} 
and so 
\begin{center}
  $x_i\rightarrow y_{i+1}^*\rightarrow x_{(i+1)+(j-i-1)}=x_j$ and \\
  $x_j\rightarrow y_{j+p-\lambda}^*\rightarrow x_{(j+p-\lambda)+(i-j+\lambda)}=x_{i+p}=x_i$.
\end{center} 
If $\lambda\leqslant j-i\leqslant p-1$, then 
\begin{center}
  $1\leqslant j-i-\lambda+1\leqslant p-\lambda$ and $1\leqslant i-j+p\leqslant p-\lambda$, 
\end{center} 
and so 
\begin{center}
  $x_i\rightarrow y_{i+(j-i-\lambda+1)}^*=y_{j-\lambda+1}^*\rightarrow x_{(j-\lambda+1)+(\lambda-1)}=x_j$ and \\
  $x_j\rightarrow y_{j+(i-j+p)}^*=y_{i+p}^*=y_i^*\rightarrow x_i$.
\end{center} 
Hence we have the distances $\partial_D(x_i,x_j)\leqslant 2$ and $\partial_D(x_j,x_i)\leqslant 2$. This means $\diam(W_1,W_1)\leqslant 2$. The diameter of $D$ is easily got by the above results. 
\end{proof}

By the discussions and main result Theorem~\ref{thm-big-part} of big part, we know the bound 
\begin{center}
  $q\leqslant \binom{p+1}{\lfloor\frac{p+1}{2}\rfloor}-1$
\end{center} 
occurs in Theorem~\ref{thm-H=2}, and more precisely in Lemma~\ref{lem-H=2-V2AV2B-neq}. 

Suppose $5\leqslant p\leqslant q$, and let $D$ be a strong orientation of $\K(3,p,q)$ with diameter two as in the proof of Lemma~\ref{lem-H=2-V2AV2B-neq}. In this orientation, we let $V_2=V_2^A\cup V_2^B$ satisfying $|A|=1$, $|B|=2$, $A\subseteq B$, and $A$ and $B$ are two subsets of $[3]$, we may assume $A=\{x_1\}$ and $B=\{x_1,x_2\}$. By the proof of Lemma~\ref{lem-H=2-V2AV2B-neq}, we have 
\begin{center}
  $V_2=V_2^1\cup V_2^{12}$, $x_1\rightarrow V_2\rightarrow x_3$, $x_3\rightarrow V_3\rightarrow x_1$, $V_3=V_3^{23}\cup V_3^3$. 
\end{center} 
The necessary and sufficient conditions of $\diam(D)=2$ are 
\begin{center}
  $\diam(V_2,V_2)\leqslant 2$ and $\diam(V_3,V_3)\leqslant 2$; 
\end{center}
\begin{center}
  $N_D^+(y)\cap V_3^3\neq\emptyset$ for any $y\in V_2^{12}$ (implied by $\diam(V_2^{12},x_2)\leqslant 2$),\\
  $N_D^-(y)\cap V_3^{23}\neq\emptyset$ for any $y\in V_2^1$ (implied by $\diam(x_2,V_2^1)\leqslant 2$),\\
  $N_D^+(z)\cap V_2^1\neq\emptyset$ for any $z\in V_3^{23}$ (implied by $\diam(V_3^{23},x_2)\leqslant 2$),\\
  $N_D^-(z)\cap V_2^{12}\neq\emptyset$ for any $z\in V_3^3$ (implied by $\diam(x_2,V_3^3)\leqslant 2$); 
\end{center} 
and 
\begin{center}
  $N_D^+(y)\cap V_3\neq\emptyset$ and $N_D^-(y)\cap V_3\neq\emptyset$ for any $y\in V_2$\\
  (implied by $\diam(V_2,x_1)\leqslant 2$ and $\diam(x_3,V_2)\leqslant 2$),\\
  $N_D^+(z)\cap V_2\neq\emptyset$ and $N_D^-(z)\cap V_2\neq\emptyset$ for any $z\in V_3$\\
  (implied by $\diam(V_3,x_3)\leqslant 2$ and $\diam(x_1,V_3)\leqslant 2$).
\end{center} 

In the following, we can use these conditions and $(p,q,\lambda)$-orientations to give some orientations of $\K(3,p,q)$ with diameter two.

\begin{orientation}[Odd orientation]\label{orientation-odd-2p+}
Let $k\geqslant 2$ and $5\leqslant p=2k+1\leqslant q$. Then $\lfloor\frac{p}{2}\rfloor=k$ and $\lceil\frac{p}{2}\rceil=\lfloor\frac{p+1}{2}\rfloor=k+1$. Let $D$ be a strong orientation of $\K(3,p,q)$ with $V_2=V_2^1\cup V_2^{12}$ and $V_3=V_3^{23}\cup V_3^3$. Let $p_1=|V_2^1|=k+1$, $p_2=|V_2^{12}|=k$, $q_1=|V_3^{23}|$, $q_2=|V_3^3|$, 
\begin{center}
  $F_1=D[V_2\cup V_3^{23}]$ and $F_2=D[V_2\cup V_3^3]$.
\end{center} 
Then $F_1$ is an orientation of $\K(p,q_1)$ and $F_2$ is an orientation of $\K(p,q_2)$. As described in Orientation~\ref{orientation-p-q-orientation}, suppose $F_1$ is a $(p,q_1,k+1)$-orientation of $\K(p,q_1)$ and $F_2$ is a $(p,q_2,k)$-orientation of $\K(p,q_2)$ such that 
\begin{center}
  $V_3^{23}(k+1)\subseteq V_3^{23}$, $\{N_D^+(z)\cap V_2\mid z\in V_3^{23}\}\subseteq \binom{V_2}{k+1}$ and\\   
  $V_3^3(k)\subseteq V_3^3$, $\{N_D^+(z)\cap V_2\mid z\in V_3^3\}\subseteq \binom{V_2}{k}\setminus\{V_2^{12}\}$.~~~~~  
\end{center}
Then $\diam(D)=2$ and $2p\leqslant q\leqslant \binom{p+1}{\lfloor\frac{p+1}{2}\rfloor}-1$.
\end{orientation}

\begin{proof}
By the property of $(p,q,\lambda)$-orientations in Orientation~\ref{orientation-p-q-orientation}, we know $\diam(V_2,V_2)\leqslant 2$, $\diam(V_3^{23},V_3^{23})\leqslant 2$ and $\diam(V_3^3,V_3^3)\leqslant 2$. Since $V_3^3\rightarrow x_2\rightarrow V_3^{23}$, we have $\diam(V_3^3,V_3^{23})\leqslant 2$. Since $V_3^{23}$ is associated with $(p,q_1,k+1)$-orientation and $V_3^3$ is associated with $(p,q_2,k)$-orientation, then $|N_D^+(z)\cap V_2)|=k+1$, $|N_D^+(w)\cap V_2|=k$ for any $z\in V_3^{23}$ and any $w\in V_3^3$, and so $(N_D^+(z)\cap V_2)\setminus (N_D^+(w)\cap V_2)\neq\emptyset$, which implies $\partial_D(z,w)\leqslant 2$, and so $\diam(V_3^{23},V_3^3)\leqslant 2$. Hence we have $\diam(V_3,V_3)\leqslant 2$. 

By the property of $(p,q,\lambda)$-orientations, for any $y\in V_2$, we know 
\begin{center}
  $|N_D^+(y)\cap V_3^{23}|\geqslant |N_D^+(y)\cap V_3^{23}(k+1)|=k$, ~~~~~\\ 
  $|N_D^-(y)\cap V_3^{23}|\geqslant |N_D^-(y)\cap V_3^{23}(k+1)|=k+1$, \\ 
  $|N_D^+(y)\cap V_3^3|\geqslant |N_D^+(y)\cap V_3^3(k)|=k+1$, ~~~~~~~\\ 
  $|N_D^-(y)\cap V_3^3|\geqslant |N_D^-(y)\cap V_3^3(k)|=k$;~~~~~~~~~~~~~
\end{center} 
for any $z\in V_3^{23}$, we have 
\begin{center}
  $|N_D^+(z)\cap V_2|=k+1$, 
  $|N_D^-(z)\cap V_2|=k$;
\end{center}
for any $z\in V_3^3$, we have 
\begin{center}
  $|N_D^+(z)\cap V_2|=k$, 
  $|N_D^-(z)\cap V_2|=k+1$. 
\end{center} 
Hence we have $N_D^+(y)\cap V_3\neq\emptyset$ and $N_D^-(y)\cap V_3\neq\emptyset$ for any $y\in V_2$, $N_D^+(z)\cap V_2\neq\emptyset$ and $N_D^-(z)\cap V_2\neq\emptyset$ for any $z\in V_3$; and $N_D^+(y)\cap V_3^3\neq\emptyset$ for any $y\in V_2^{12}$, $N_D^-(y)\cap V_3^{23}\neq\emptyset$ for any $y\in V_2^1$. For any $z\in V_3^3$, we know $N_D^+(z)\cap V_2\neq V_2^{12}$, which means $N_D^-(z)\cap V_2^{12}\neq\emptyset$. For any $z\in V_3^{23}$, we have $|N_D^+(z)\cap V_2|=k+1>|V_2^{12}|$, which means $N_D^+(z)\cap V_2^1\neq\emptyset$. 

By the above discussion, we thus have $\diam(D)=2$, and we also get 
\begin{center}
  $p\leqslant q_1\leqslant \binom{p}{k+1}$ and $p\leqslant q_2\leqslant \binom{p}{k}-1$, 
\end{center} 
hence $2p\leqslant q=q_1+q_2\leqslant \binom{p}{k+1}+\binom{p}{k}-1=\binom{p+1}{k+1}-1$.
\end{proof}

\begin{orientation}[Even orientation]\label{orientation-even-2p+}
Let $k\geqslant 2$ and $6\leqslant p=2k+2\leqslant q$. Then $\lfloor\frac{p}{2}\rfloor=\lceil\frac{p}{2}\rceil=\lfloor\frac{p+1}{2}\rfloor=k+1$ and $\lceil\frac{p+1}{2}\rceil=k+2$. Let $D$ be a strong orientation of $\K(3,p,q)$ with $V_2=V_2^1\cup V_2^{12}$ and $V_3=V_3^{23}\cup V_3^3$. Let $p_1=|V_2^1|=k$, $p_2=|V_2^{12}|=k+2$, $q_1=|V_3^{23}|$, $q_2=|V_3^3|$, 
\begin{center}
  $F_1=D[V_2\cup V_3^{23}]$ and $F_2=D[V_2\cup V_3^3]$.
\end{center} 
Then $F_1$ is an orientation of $\K(p,q_1)$ and $F_2$ is an orientation of $\K(p,q_2)$. As described in Orientation~\ref{orientation-p-q-orientation}, suppose $F_1$ is a $(p,q_1,k+2)$-orientation of $\K(p,q_1)$ and $F_2$ is a $(p,q_2,k+1)$-orientation of $\K(p,q_2)$ such that 
\begin{center}
  $V_3^{23}(k+2)\subseteq V_3^{23}$, $\{N_D^+(z)\cap V_2\mid z\in V_3^{23}\}\subseteq \binom{V_2}{k+2}\setminus\{V_2^{12}\}$ and\\   
  $V_3^3(k+1)\subseteq V_3^3$, $\{N_D^+(z)\cap V_2\mid z\in V_3^3\}\subseteq \binom{V_2}{k+1}$.~~~~~~~~~~~~~~~~~~  
\end{center}
Then $\diam(D)=2$ and $2p\leqslant q\leqslant \binom{p+1}{\lfloor\frac{p+1}{2}\rfloor}-1$.
\end{orientation}

\begin{proof}
By the property of $(p,q,\lambda)$-orientations in Orientation~\ref{orientation-p-q-orientation}, we know $\diam(V_2,V_2)\leqslant 2$, $\diam(V_3^{23},V_3^{23})\leqslant 2$ and $\diam(V_3^3,V_3^3)\leqslant 2$. Since $V_3^3\rightarrow x_2\rightarrow V_3^{23}$, we have $\diam(V_3^3,V_3^{23})\leqslant 2$. Since $V_3^{23}$ is associated with $(p,q_1,k+2)$-orientation and $V_3^3$ is associated with $(p,q_2,k+1)$-orientation, then $|N_D^+(z)\cap V_2)|=k+2$, $|N_D^+(w)\cap V_2|=k+1$ for any $z\in V_3^{23}$ and any $w\in V_3^3$, and so $(N_D^+(z)\cap V_2)\setminus (N_D^+(w)\cap V_2)\neq\emptyset$, which implies $\partial_D(z,w)\leqslant 2$, and so $\diam(V_3^{23},V_3^3)\leqslant 2$. Hence we have $\diam(V_3,V_3)\leqslant 2$. 

By the property of $(p,q,\lambda)$-orientations, for any $y\in V_2$, we know 
\begin{center}
  $|N_D^+(y)\cap V_3^{23}|\geqslant |N_D^+(y)\cap V_3^{23}(k+2)|=k$, ~~~~~\\ 
  $|N_D^-(y)\cap V_3^{23}|\geqslant |N_D^-(y)\cap V_3^{23}(k+2)|=k+2$, \\ 
  $|N_D^+(y)\cap V_3^3|\geqslant |N_D^+(y)\cap V_3^3(k+1)|=k+1$, ~~\\ 
  $|N_D^-(y)\cap V_3^3|\geqslant |N_D^-(y)\cap V_3^3(k+1)|=k+1$;~~~
\end{center} 
for any $z\in V_3^{23}$, we have 
\begin{center}
  $|N_D^+(z)\cap V_2|=k+2$, 
  $|N_D^-(z)\cap V_2|=k$;
\end{center}
for any $z\in V_3^3$, we have 
\begin{center}
  $|N_D^+(z)\cap V_2|=k+1$, 
  $|N_D^-(z)\cap V_2|=k+1$. 
\end{center} 
Hence we have $N_D^+(y)\cap V_3\neq\emptyset$ and $N_D^-(y)\cap V_3\neq\emptyset$ for any $y\in V_2$, $N_D^+(z)\cap V_2\neq\emptyset$ and $N_D^-(z)\cap V_2\neq\emptyset$ for any $z\in V_3$; and $N_D^+(y)\cap V_3^3\neq\emptyset$ for any $y\in V_2^{12}$, $N_D^-(y)\cap V_3^{23}\neq\emptyset$ for any $y\in V_2^1$. For any $z\in V_3^3$, we have $|N_D^+(z)\cap V_2|=k+1<|V_2^{12}|$, which means $N_D^-(z)\cap V_2^{12}\neq\emptyset$. For any $z\in V_3^{23}$, we know $N_D^+(z)\cap V_2\neq V_2^{12}$, which means $N_D^+(z)\cap V_2^1\neq\emptyset$. 

By the above discussion, we thus have $\diam(D)=2$, and we also get 
\begin{center}
  $p\leqslant q_1\leqslant \binom{p}{k+2}-1$ and $p\leqslant q_2\leqslant \binom{p}{k+1}$, 
\end{center} 
hence $2p\leqslant q=q_1+q_2\leqslant \binom{p}{k+2}-1+\binom{p}{k+1}=\binom{p+1}{k+2}-1$.
\end{proof}

By Orientation~\ref{orientation-odd-2p+} and Orientation~\ref{orientation-even-2p+}, we get that $\K(3,p,q)$ has an orientation of diameter two for any $p\geqslant 5$ and any $q$ with $2p\leqslant q\leqslant \binom{p+1}{\lfloor\frac{p+1}{2}\rfloor}-1$. 

\begin{theorem}\label{thm-small-part-2p+}
Let $p\geqslant 5$ and $2p\leqslant q\leqslant \binom{p+1}{\lfloor\frac{p+1}{2}\rfloor}-1$, then $f(\K(3,p,q))=2$. 
\end{theorem}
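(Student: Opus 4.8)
The plan is to derive the theorem directly from Orientation~\ref{orientation-odd-2p+} and Orientation~\ref{orientation-even-2p+}: those constructions already prove that the orientation of $\K(3,p,q)$ they produce has diameter two, so the only thing left to supply is, for \emph{each} admissible $q$, a legitimate choice of the auxiliary parameters $q_1=|V_3^{23}|$, $q_2=|V_3^3|$ together with a suitable cyclic labelling of $V_2$. First I would split on the parity of $p$: write $p=2k+1$ (odd) or $p=2k+2$ (even). Since $p\geqslant 5$ we get $k\geqslant 2$ in either case, which guarantees that each $(p,q_i,\lambda)$-orientation used below satisfies the standing hypotheses $\lambda\geqslant 2$ and $p-\lambda\geqslant 2$ of Orientation~\ref{orientation-p-q-orientation}.

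The arithmetic heart of the argument is the following. By Pascal's identity, in the odd case $\binom{p+1}{\lfloor\frac{p+1}{2}\rfloor}=\binom{p}{k+1}+\binom{p}{k}$, so the prescribed interval $2p\leqslant q\leqslant \binom{p+1}{\lfloor\frac{p+1}{2}\rfloor}-1$ is exactly the set of sums $q_1+q_2$ with $p\leqslant q_1\leqslant \binom{p}{k+1}$ and $p\leqslant q_2\leqslant \binom{p}{k}-1$; these are precisely the ranges in which the $(p,q_1,k+1)$- and $(p,q_2,k)$-orientations of Orientation~\ref{orientation-odd-2p+} live. Given such a $q$, I would take $q_2=\max\{p,\,q-\binom{p}{k+1}\}$ and $q_1=q-q_2$; a short case check (according to whether $q\leqslant p+\binom{p}{k+1}$ or not) places $q_1$ and $q_2$ in their ranges, using $q\geqslant 2p$ for the lower endpoints and $q\leqslant \binom{p}{k+1}+\binom{p}{k}-1$ together with the elementary bound $\binom{p}{k}-1\geqslant p$ (valid for $p\geqslant 5$) for the upper endpoints. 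The even case is entirely analogous, now with $\binom{p+1}{\lfloor\frac{p+1}{2}\rfloor}=\binom{p}{k+2}+\binom{p}{k+1}$ and the ranges $p\leqslant q_1\leqslant \binom{p}{k+2}-1$, $p\leqslant q_2\leqslant \binom{p}{k+1}$ dictated by Orientation~\ref{orientation-even-2p+}.

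It then remains to fix a cyclic labelling $x_0,\dots,x_{p-1}$ of $V_2$ for which $V_2^{12}$ is not a cyclic interval of the single relevant length (length $k$ in the odd case, length $k+2$ in the even case). This is exactly what makes the extra requirement ``$N_D^+(z)\cap V_2\neq V_2^{12}$'' in the two Orientations attainable, since the $p$ cyclic intervals of that length are forced to appear as out-neighbour sets. Such a labelling exists because both $V_2^1$ and $V_2^{12}$ have at least two vertices (odd: $|V_2^1|=k+1$, $|V_2^{12}|=k$; even: $|V_2^1|=k$, $|V_2^{12}|=k+2$): if we list the vertices cyclically as one vertex of $V_2^{12}$, then one vertex of $V_2^1$, then the remaining vertices of $V_2^{12}$, then the remaining vertices of $V_2^1$, the set $V_2^{12}$ is broken into two nonempty cyclic arcs and so is not an interval. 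With $q_1,q_2$ as above and this labelling, Orientation~\ref{orientation-p-q-orientation} supplies the two bipartite orientations $F_1$ and $F_2$ meeting all the conditions of Orientation~\ref{orientation-odd-2p+} (respectively Orientation~\ref{orientation-even-2p+}), which therefore yields a strong orientation of $\K(3,p,q)$ of diameter two. Hence $f(\K(3,p,q))\leqslant 2$; and since any two vertices of $V_2$ are non-adjacent, every orientation of $\K(3,p,q)$ has diameter at least two, so $f(\K(3,p,q))=2$.

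I expect the only real difficulty to be bookkeeping, not ideas: one must verify that the two ``$-1$'' corrections (coming from having to forbid $V_2^{12}$ as an out-neighbour set) are compatible with the stated interval $[2p,\binom{p+1}{\lfloor\frac{p+1}{2}\rfloor}-1]$ at both endpoints and for both parities, and check that a single cyclic labelling of $V_2$ serves $F_1$ and $F_2$ at once — which it does, because for each parity the forbidden length equals $|V_2^{12}|$ for only one of the two bipartite orientations.
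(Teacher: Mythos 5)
Your proposal is correct and follows essentially the same route as the paper, which proves this theorem simply by invoking Orientation~\ref{orientation-odd-2p+} and Orientation~\ref{orientation-even-2p+}. You in fact supply details the paper leaves implicit — the explicit choice of $q_1,q_2$ covering every $q$ in $[2p,\binom{p+1}{\lfloor\frac{p+1}{2}\rfloor}-1]$ and the cyclic labelling of $V_2$ ensuring $V_2^{12}$ is not a forced interval — and these checks are all sound.
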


In the rest of this section, we need to find diameter two orientations of $\K(3,p,q)$ for $5\leqslant p\leqslant q\leqslant 2p$.

\begin{orientation}\label{orientation-2p-}
Let $5\leqslant p\leqslant q$. Let $D$ be a strong orientation of $\K(3,p,q)$ with $V_2=V_2^1\cup V_2^2\cup V_2^3$ and $V_3=V_3^{23}\cup V_3^{13}\cup V_3^{12}$ where $V_2^1=\{y_1\}$, $V_2^2=\{y_2\}$, $p_1=|V_2^3|=p-2$, $V_3^{23}=\{z_{23}\}$, $V_3^{13}=\{z_{13}\}$ and $q_1=|V_3^{12}|$. We use Lemma~\ref{lem-diam2-ViA-VjB} to give partial orientations of $D$: $V_3^{12}\rightarrow V_2^1\cup V_2^2$, $V_3^{13}\rightarrow V_2^1\cup V_2^3$ and $V_3^{23}\rightarrow V_2^2\cup V_2^3$. Additionally, we let $V_2^1\rightarrow V_3^{23}$, $V_2^2\rightarrow V_3^{13}$ and $F=D[V_2^3\cup V_3^{12}]$ be a $(p_1,q_1,\lambda)$-orientation of $\K(p_1,q_1)$ where $\lambda=\lfloor\frac{p_1}{2}\rfloor$ or $\lceil\frac{p_1}{2}\rceil$. Then 
\begin{center}
  $V_2\cup V_3\subseteq N_D^+(V_1)$, \\ 
  $V_2\cup V_3\subseteq N_D^-(V_1)$, \\ 
  $V_1\cup V_3\subseteq N_D^+(V_2)$, \\ 
  $V_1\cup V_3\subseteq N_D^-(V_2)$, \\ 
  $V_1\cup V_2\subseteq N_D^+(V_3)$, \\  
  $V_1\cup V_2\subseteq N_D^-(V_3)$,
\end{center} 
$\diam(D)=2$ and $p\leqslant q\leqslant 2+\binom{p-2}{\lfloor\frac{p-2}{2}\rfloor}$.
\end{orientation}

\begin{proof}
The results can be checked by calculating distances between all pairs of vertices. 
\end{proof}

Let $D$ be the orientation as in Orientation~\ref{orientation-2p-}, then we have 
\begin{center}
  $V_2\cup V_3=N_D^+(V_1)=N_D^-(V_1)$, \\ 
  $V_1\cup V_3=N_D^+(V_2)=N_D^-(V_2)$, \\ 
  $V_1\cup V_2=N_D^+(V_3)=N_D^-(V_3)$.
\end{center} 
If we add $V_3^+$ (or $V_3^-$, or both) to $V_3$ and denote $V_3'=V_3\cup V_3^+$ (or $V_3'=V_3\cup V_3^-$, or $V_3'=V_3\cup V_3^+\cup V_3^-$) and the new orientation as $D'$, then we have $V_1\rightarrow V_3^+\rightarrow V_2\rightarrow V_3^-\rightarrow V_1$, and we can check that $\diam(V_3^+,V(D))\leqslant 2$ and $\diam(V(D),V_3^+)\leqslant 2$ (or, $\diam(V_3^-,V(D))\leqslant 2$ and $\diam(V(D),V_3^-)\leqslant 2$). Hence $\diam(D')=2$ and $p+1\leqslant q\leqslant 3+\binom{p-2}{\lfloor\frac{p-2}{2}\rfloor}$ (or $p+2\leqslant q\leqslant 4+\binom{p-2}{\lfloor\frac{p-2}{2}\rfloor}$). By the above discussion and Orientation~\ref{orientation-2p-}, we get that $\K(3,p,q)$ has an orientation of diameter two for any $p\geqslant 5$ and any $q$ with $p\leqslant q\leqslant 4+\binom{p-2}{\lfloor\frac{p-2}{2}\rfloor}$.

\begin{theorem}\label{thm-small-part-p+}
Let $p\geqslant 5$ and $p\leqslant q\leqslant 4+\binom{p-2}{\lfloor\frac{p-2}{2}\rfloor}$, then $f(\K(3,p,q))=2$.
\end{theorem}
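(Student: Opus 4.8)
The plan is to prove $f(\K(3,p,q))=2$ by the two inequalities $f(\K(3,p,q))\ge 2$ and $f(\K(3,p,q))\le 2$. The lower bound is immediate: since $\K(3,p,q)$ is a complete tripartite graph with $n=3$ parts, the general estimate $2\le f(\K(p_1,p_2,p_3))$ recalled in Section~\ref{sec-Introduction} applies (alternatively, the three vertices of $V_1$ are pairwise non-adjacent, so $\partial_D(x_1,x_2)\ge 2$ for every orientation $D$). Thus the real task is to exhibit, for each $q$ with $p\le q\le 4+\binom{p-2}{\lfloor(p-2)/2\rfloor}$, a strong orientation of $\K(3,p,q)$ of diameter exactly two.

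For the range $p\le q\le 2+\binom{p-2}{\lfloor(p-2)/2\rfloor}$ I would use Orientation~\ref{orientation-2p-} directly: partition $V_2=V_2^1\cup V_2^2\cup V_2^3$ with $|V_2^1|=|V_2^2|=1$, $|V_2^3|=p-2$, partition $V_3=V_3^{23}\cup V_3^{13}\cup V_3^{12}$ with $|V_3^{23}|=|V_3^{13}|=1$ and $|V_3^{12}|=q_1=q-2$, orient the edges incident with $V_1$ and with the singleton classes as prescribed there, and let $D[V_2^3\cup V_3^{12}]$ be a $(p-2,q_1,\lambda)$-orientation with $\lambda=\lfloor\frac{p-2}{2}\rfloor$ or $\lceil\frac{p-2}{2}\rceil$ as in Orientation~\ref{orientation-p-q-orientation}. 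The hypothesis $p\ge 5$ gives $p-2\ge 3$, and the bound $q\le 2+\binom{p-2}{\lfloor(p-2)/2\rfloor}$ together with $q\ge p$ keeps $q_1$ in the admissible interval $p-2\le q_1\le\binom{p-2}{\lfloor(p-2)/2\rfloor}$; the diameter of this $D$ is two by Orientation~\ref{orientation-2p-}.

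To extend to $2+\binom{p-2}{\lfloor(p-2)/2\rfloor}<q\le 4+\binom{p-2}{\lfloor(p-2)/2\rfloor}$ I would use the remark following Orientation~\ref{orientation-2p-}: starting from the orientation just built on $\K(3,p,q-1)$ (resp.\ on $\K(3,p,q-2)$), adjoin one extra class $V_3^+$ oriented $V_1\to V_3^+\to V_2$ (resp.\ both $V_3^+$ and $V_3^-$, with $V_2\to V_3^-\to V_1$). Since $V_3^+$ receives an arc from every vertex of $V_1$ and sends an arc to every vertex of $V_2$, it reaches all of $V_3$ and $V_1$ in two steps, so one checks $\diam(V_3^+,V(D'))\le 2$ and $\diam(V(D'),V_3^+)\le 2$, and symmetrically for $V_3^-$; the enlarged orientation still has diameter two. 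This pushes the upper end of the covered range to $3+\binom{p-2}{\lfloor(p-2)/2\rfloor}$ with one added class and to $4+\binom{p-2}{\lfloor(p-2)/2\rfloor}$ with two, so together with the previous paragraph the whole range of the theorem is covered.

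The step that demands the most care is the internal verification asserted in Orientation~\ref{orientation-2p-} that the assembled digraph has diameter two: one must confirm all pairwise distances, in particular that each $z\in V_3^{12}$ reaches $y_1\in V_2^1$ and $y_2\in V_2^2$ within distance two (using $V_2^1\to V_3^{23}$, $V_2^2\to V_3^{13}$ and $V_3^{13}\to V_2^1$, $V_3^{23}\to V_2^2$), that $z_{23}$ and $z_{13}$ both reach and are reached by everything in time, and that the cyclic $(p-2,q_1,\lambda)$-orientation on $V_2^3\cup V_3^{12}$ dovetails with these. A genuine but minor point is the case $p=5$, where $p-2=3$ lies just outside the literal hypotheses of Orientation~\ref{orientation-p-q-orientation}; there one notes that its cyclic construction (or, equivalently, an explicit rotation orientation of $\K(3,q_1)$ with $q_1\le 3$) still yields the required diameter-two bipartite piece, so the argument goes through unchanged.
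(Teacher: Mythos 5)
Your proposal is correct and follows essentially the same route as the paper: Orientation~\ref{orientation-2p-} covers $p\leqslant q\leqslant 2+\binom{p-2}{\lfloor\frac{p-2}{2}\rfloor}$, and appending $V_3^+$ and then also $V_3^-$ extends the range by one and two, exactly as in the remark the paper places after that orientation. Your observation that for $p=5$ the inner piece is a $(3,q_1,\lambda)$-orientation falling outside the literal hypotheses of Orientation~\ref{orientation-p-q-orientation} (which assume $p\geqslant 4$ and $\lambda\geqslant 2$, $p-\lambda\geqslant 2$), but that the cyclic construction still works there, is a legitimate patch of a detail the paper glosses over.
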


\begin{orientation}\label{orientation-p+3}
Let $5\leqslant p\leqslant q$. Let $D$ be a strong orientation of $\K(3,p,q)$ with $V_2=V_2^1\cup V_2^2\cup V_2^{12}$ and $V_3=V_3^+\cup V_3^{13}\cup V_3^{23}\cup V_3^{3}$ where $V_2^1=\{y_1\}$, $V_2^2=\{y_2\}$, $V_2^{12}=\{y_{12}\}\cup V_2'$, $V_3^+=\{z_+\}$, $V_3^{13}=\{z_{13}\}$, $V_3^{23}=\{z_{23}\}$ and $q_1=|V_3^3|$. We use Lemma~\ref{lem-diam2-ViA-VjB} to give partial orientations of $D$: $V_3^+\rightarrow V_2$, $V_3^{13}\rightarrow V_2^1$ and $V_3^{23}\rightarrow V_2^2$. Additionally, we let $V_3^{13}\rightarrow V_2^2\cup V_2'$, $V_3^{23}\rightarrow V_2^1\cup V_2'$, $y_{12}\rightarrow V_3^{13}\cup V_3^{23}$ and $F=D[V_2\cup V_3^3]$ be a $(p,q_1,\lambda)$-orientation of $\K(p,q_1)$ where $\lambda=\lfloor\frac{p}{2}\rfloor$ or $\lceil\frac{p}{2}\rceil$. Then $\diam(D)=2$ and $p+3\leqslant q\leqslant 3+\binom{p}{\lfloor\frac{p}{2}\rfloor}$.
\end{orientation}

\begin{proof}
The results can be checked by calculating distances between all pairs of vertices. 
\end{proof}

By Orientation~\ref{orientation-2p-}, we get that $\K(3,p,q)$ has an orientation of diameter two for any $p\geqslant 5$ and any $q$ with $p+3\leqslant q\leqslant 3+\binom{p}{\lfloor\frac{p}{2}\rfloor}$.

\begin{theorem}\label{thm-small-part-p+3}
Let $p\geqslant 5$ and $p+3\leqslant q\leqslant 3+\binom{p}{\lfloor\frac{p}{2}\rfloor}$, then $f(\K(3,p,q))=2$.
\end{theorem}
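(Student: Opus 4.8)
The plan is to deduce Theorem~\ref{thm-small-part-p+3} from Orientation~\ref{orientation-p+3} together with the trivial lower bound. For the lower bound, since $q\geqslant p\geqslant 5$ the graph $\K(3,p,q)$ contains non-adjacent vertices (any two vertices of $V_3$), so in every orientation their distance is at least $2$; hence $f(\K(3,p,q))\geqslant 2$ (this is also the general lower bound quoted in Section~\ref{sec-Introduction}). It therefore remains to produce, for each $q$ in the stated interval, a strong orientation of $\K(3,p,q)$ of diameter exactly $2$.

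For the upper bound, fix $q$ with $p+3\leqslant q\leqslant 3+\binom{p}{\lfloor p/2\rfloor}$ and set $q_1=q-3$, so $p\leqslant q_1\leqslant\binom{p}{\lfloor p/2\rfloor}$. Put $\lambda=\lfloor p/2\rfloor$; since $p\geqslant 5$ we have $\lambda\geqslant 2$, $p-\lambda=\lceil p/2\rceil\geqslant 3\geqslant 2$, and $\binom{p}{\lambda}=\binom{p}{\lfloor p/2\rfloor}\geqslant q_1\geqslant p$, so the hypotheses of Orientation~\ref{orientation-p-q-orientation} are met and a $(p,q_1,\lambda)$-orientation of $\K(p,q_1)$ exists. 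Feeding this orientation in as $F=D[V_2\cup V_3^3]$ in Orientation~\ref{orientation-p+3}, with $|V_2^1|=|V_2^2|=1$, $|V_2^{12}|=p-2$, $|V_3^+|=|V_3^{13}|=|V_3^{23}|=1$ and $|V_3^3|=q_1$, yields a strong orientation $D$ of $\K(3,p,q)$ with $\diam(D)=2$. Thus $f(\K(3,p,q))\leqslant 2$, and combining with the lower bound gives $f(\K(3,p,q))=2$.

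The only genuine content is the verification compressed into the phrase ``can be checked by calculating distances between all pairs of vertices'' in Orientation~\ref{orientation-p+3}, and this is the step I expect to be the main obstacle. One must show that the prescribed arcs --- the $V_1$--$V_2$ and $V_1$--$V_3$ arcs forced by the partition types ($A=\{1\}$, $B=\{2\}$, $C=\{1,2\}$, which in particular forces $x_3\to V_3$), the arcs $V_3^+\to V_2$, $V_3^{13}\to V_2^1\cup V_2^2\cup V_2'$, $V_3^{23}\to V_2^1\cup V_2^2\cup V_2'$, $y_{12}\to V_3^{13}\cup V_3^{23}$, and the $(p,q_1,\lambda)$-orientation on $V_2\cup V_3^3$ --- place every ordered pair of vertices at distance at most $2$. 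I would organize the check by the type of pair: (i) pairs inside $V_1$ (routed through $V_2\cup V_3$); (ii) pairs inside $V_2$ and pairs inside $V_3$, using $\diam(W_1,W_1)\leqslant 2$ and $\diam(W_2,W_2)\leqslant 2$ from Orientation~\ref{orientation-p-q-orientation} for the part living in $V_2\cup V_3^3$, together with the in/out-neighbour counts from that orientation and the fact that $V_3^+$, $V_3^{13}$, $V_3^{23}$ are singletons; (iii) pairs between $V_1$ and $V_2\cup V_3$; (iv) pairs between $V_2$ and $V_3$, where one checks mixed-pair / Sperner-type conditions such as $N_D^+(y)\cap V_3^3\neq\emptyset$ for $y\in V_2^{12}$ and $N_D^-(z)\cap V_2^{12}\neq\emptyset$ for $z\in V_3^3$, in the spirit of the necessary-and-sufficient list stated just before Orientation~\ref{orientation-odd-2p+}. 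The point to watch is that, because $p\geqslant 5$, one has $p-2>\lambda$ and $V_2'=V_2^{12}\setminus\{y_{12}\}\neq\emptyset$, so all these conditions remain valid even at the extreme values $q_1=p$ and $q_1=\binom{p}{\lfloor p/2\rfloor}$; apart from that the analysis is a routine finite case check.
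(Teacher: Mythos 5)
Your proposal matches the paper's own argument: the paper likewise derives this theorem directly from Orientation~\ref{orientation-p+3} (instantiated with $q_1=q-3$ and a $(p,q_1,\lambda)$-orientation on $D[V_2\cup V_3^3]$), leaving the distance verification as a routine check exactly as you describe. Your outline of how to organize that check, and your observation that the trivial lower bound $f\geqslant 2$ completes the equality, are consistent with the paper, so the approach is essentially identical.
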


We know for $p\geqslant 5$ that 
\begin{center}
  $p+2\leqslant 4+\binom{p-2}{\lfloor\frac{p-2}{2}\rfloor}$,\\
  $\max\Bigl\{2p,4+\binom{p-2}{\lfloor\frac{p-2}{2}\rfloor},3+\binom{p}{\lfloor\frac{p}{2}\rfloor}\Bigr\}=3+\binom{p}{\lfloor\frac{p}{2}\rfloor}$.
\end{center} 
So we have the following result. 

\begin{theorem}\label{thm-small-part-p-to-2p}
Let $p\geqslant 5$ and $p\leqslant q\leqslant 2p$, then $f(\K(3,p,q))=2$.
\end{theorem}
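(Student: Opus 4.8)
The plan is to combine the three orientation constructions already established in this section, together with the elementary arithmetic inequalities listed just above the statement, to cover the whole range $p\leqslant q\leqslant 2p$.

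First I would invoke Theorem~\ref{thm-small-part-p+}, which gives $f(\K(3,p,q))=2$ for all $q$ in the interval $[\,p,\ 4+\binom{p-2}{\lfloor\frac{p-2}{2}\rfloor}\,]$. Since $p\geqslant 5$ implies $p+2\leqslant 4+\binom{p-2}{\lfloor\frac{p-2}{2}\rfloor}$ (the first displayed inequality), this already handles $q\in\{p,p+1,p+2\}$ and in fact everything up to $4+\binom{p-2}{\lfloor\frac{p-2}{2}\rfloor}$. Next I would invoke Theorem~\ref{thm-small-part-p+3}, which covers $q\in[\,p+3,\ 3+\binom{p}{\lfloor\frac{p}{2}\rfloor}\,]$. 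The two intervals overlap: $p+3\leqslant 4+\binom{p-2}{\lfloor\frac{p-2}{2}\rfloor}$ for $p\geqslant 5$ (since $\binom{p-2}{\lfloor\frac{p-2}{2}\rfloor}\geqslant p-1$ in that range), so their union is the single interval $[\,p,\ 3+\binom{p}{\lfloor\frac{p}{2}\rfloor}\,]$.

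It then remains only to check that $2p\leqslant 3+\binom{p}{\lfloor\frac{p}{2}\rfloor}$, which is exactly the content of the second displayed inequality, namely $\max\{2p,\ 4+\binom{p-2}{\lfloor\frac{p-2}{2}\rfloor},\ 3+\binom{p}{\lfloor\frac{p}{2}\rfloor}\}=3+\binom{p}{\lfloor\frac{p}{2}\rfloor}$ for $p\geqslant 5$. Hence every $q$ with $p\leqslant q\leqslant 2p$ lies in $[\,p,\ 3+\binom{p}{\lfloor\frac{p}{2}\rfloor}\,]$, and so $\K(3,p,q)$ admits a strong orientation of diameter two. Combined with the lower bound $f(\K(3,p,q))\geqslant 2$ (which holds for every complete tripartite graph, being non-complete), this gives $f(\K(3,p,q))=2$.

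The only genuine content is verifying the two scalar inequalities, and these are already asserted in the displayed lines immediately preceding the theorem; so the ``hard part'' is essentially nonexistent — the proof is a bookkeeping argument that stitches together Theorems~\ref{thm-small-part-p+} and~\ref{thm-small-part-p+3}. If I wanted to be careful I would double-check the overlap condition $p+3\leqslant 4+\binom{p-2}{\lfloor\frac{p-2}{2}\rfloor}$ at the boundary case $p=5$: there $\binom{3}{1}=3$, so the right side is $7=p+2<p+3$, meaning the two intervals do \emph{not} overlap at $p=5$ and one must instead note that $q=p+3=8$ is caught directly by Theorem~\ref{thm-small-part-p+3} while $q\leqslant 7$ is caught by Theorem~\ref{thm-small-part-p+}; so the right framing is simply that $[\,p,\,4+\binom{p-2}{\lfloor\frac{p-2}{2}\rfloor}\,]\cup[\,p+3,\,3+\binom{p}{\lfloor\frac{p}{2}\rfloor}\,]\supseteq[\,p,\,2p\,]$, which holds because consecutive integers are covered and the union's right endpoint $3+\binom{p}{\lfloor\frac{p}{2}\rfloor}\geqslant 2p$. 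This is the one place a reader should pause, and I would spell out the $p=5$ check explicitly.
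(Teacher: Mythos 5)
Your proof is correct and is essentially identical to the paper's: the paper also deduces this theorem by stitching together Theorem~\ref{thm-small-part-p+} (covering $q$ up to $4+\binom{p-2}{\lfloor\frac{p-2}{2}\rfloor}\geqslant p+2$) and Theorem~\ref{thm-small-part-p+3} (covering $q$ from $p+3$ up to $3+\binom{p}{\lfloor\frac{p}{2}\rfloor}\geqslant 2p$), via the two displayed inequalities. Your explicit check of the $p=5$ boundary, where the two intervals are merely adjacent rather than overlapping, is a worthwhile clarification that the paper leaves implicit.
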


Combing Theorem~\ref{thm-small-part-2p+} and Theorem~\ref{thm-small-part-p-to-2p}, we get the small part.

\begin{theorem}\label{thm-small-part}
Let $5\leqslant p\leqslant q\leqslant \binom{p+1}{\lfloor\frac{p+1}{2}\rfloor}-1$, then $f(\K(3,p,q))=2$.
\end{theorem}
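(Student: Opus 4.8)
The plan is to obtain Theorem~\ref{thm-small-part} as an assembly of the two construction results already established. First, for the lower bound: since $p\geqslant 5$, the part $V_2$ contains two distinct vertices, and any two vertices of the same part are non-adjacent in $\K(3,p,q)$, so in every strong orientation $D$ their directed distance is at least $2$; hence $\diam(D)\geqslant 2$ and therefore $f(\K(3,p,q))\geqslant 2$. (Equivalently, this is the general bound $2\leqslant f(\K(p_1,\ldots,p_n))$ for $n\geqslant 3$ recorded in the introduction.) It then remains to exhibit, for every integer $q$ with $p\leqslant q\leqslant \binom{p+1}{\lfloor\frac{p+1}{2}\rfloor}-1$, a strong orientation of $\K(3,p,q)$ of diameter exactly $2$.

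Next I would split the admissible range of $q$ at the value $2p$. For $p\leqslant q\leqslant 2p$ such an orientation is furnished by Theorem~\ref{thm-small-part-p-to-2p}, and for $2p\leqslant q\leqslant \binom{p+1}{\lfloor\frac{p+1}{2}\rfloor}-1$ one is furnished by Theorem~\ref{thm-small-part-2p+}. Because $p\geqslant 5$ forces $2p\leqslant\binom{p+1}{\lfloor\frac{p+1}{2}\rfloor}-1$, the upper interval is nonempty, and since the two closed intervals share the endpoint $q=2p$, their union is exactly $\{q:\ p\leqslant q\leqslant\binom{p+1}{\lfloor\frac{p+1}{2}\rfloor}-1\}$. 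Thus every admissible $q$ is covered, so $f(\K(3,p,q))\leqslant 2$, and together with the lower bound this gives $f(\K(3,p,q))=2$ on the whole range.

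So the final step is essentially a one-line corollary; the real content lives in the ingredients it invokes, namely: the modular-arithmetic verification that a $(p,q,\lambda)$-orientation of $\K(p,q)$ has $\diam(W_1,W_1)\leqslant2$ and $\diam(W_2,W_2)\leqslant2$ (Orientation~\ref{orientation-p-q-orientation}); the check that the odd and even $2p$-orientations (Orientations~\ref{orientation-odd-2p+} and~\ref{orientation-even-2p+}) satisfy the necessary-and-sufficient adjacency conditions for diameter two extracted from Lemma~\ref{lem-H=2-V2AV2B-neq}; and the check of Orientations~\ref{orientation-2p-}--\ref{orientation-p+3} together with the elementary inequalities $p+2\leqslant 4+\binom{p-2}{\lfloor\frac{p-2}{2}\rfloor}$ and $\max\{2p,\,4+\binom{p-2}{\lfloor\frac{p-2}{2}\rfloor},\,3+\binom{p}{\lfloor\frac{p}{2}\rfloor}\}=3+\binom{p}{\lfloor\frac{p}{2}\rfloor}$ that make the small sub-ranges tile $[p,2p]$. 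Once those are in hand there is no further obstacle in this proof; the only thing to be careful about in the assembly is that consecutive intervals overlap at an endpoint rather than leaving a one-integer gap, and that $p\geqslant 5$ keeps the upper interval nonempty.
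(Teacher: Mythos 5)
Your proposal is correct and follows essentially the same route as the paper, which obtains Theorem~\ref{thm-small-part} by combining Theorem~\ref{thm-small-part-p-to-2p} (covering $p\leqslant q\leqslant 2p$) with Theorem~\ref{thm-small-part-2p+} (covering $2p\leqslant q\leqslant \binom{p+1}{\lfloor\frac{p+1}{2}\rfloor}-1$), the two intervals meeting at $q=2p$. Your added remarks — the lower bound $f\geqslant 2$ from two non-adjacent vertices in $V_2$ and the check that $2p\leqslant\binom{p+1}{\lfloor\frac{p+1}{2}\rfloor}-1$ for $p\geqslant 5$ — are correct and merely make explicit what the paper leaves implicit.
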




\section{Acknowledgement}


This work is supported by the National Natural Science Foundation of China (11901014) and Yale-NUS Seed Grant (IG23-SG009).


%
%

\end{document}